\title{Geometric Realizations of the Basic Representation of $\widehat\gl_r$}
\author{Joel Lemay}
\theoremstyle{plain}
\newtheorem{thm}{Theorem}[section]
\newtheorem{prop}[thm]{Proposition}
\newtheorem{lemma}[thm]{Lemma}
\newtheorem{cor}[thm]{Corollary}
\theoremstyle{definition}
\newtheorem{defn}[thm]{Definition}
\newtheorem{rmk}[thm]{Remark}
\newtheorem*{ack}{Acknowledgements}
\numberwithin{equation}{section}
\newcommand{\Z}{\mathbb{Z}}
\newcommand{\C}{\mathbb{C}}
\DeclareMathOperator{\Cl}{Cl}
\newcommand{\M}{\mathcal{M}}
\newcommand{\N}{\mathbb{N}}
\DeclareMathOperator{\Hom}{Hom}
\DeclareMathOperator{\GL}{GL}
\DeclareMathOperator{\st}{st}
\DeclareMathOperator{\im}{im}
\newcommand{\QQ}{\widetilde{Q}}
\newcommand{\MM}{\mathfrak{M}}
\DeclareMathOperator{\id}{id}
\DeclareMathOperator{\tr}{tr}
\DeclareMathOperator{\vspan}{span}
\DeclareMathOperator{\rk}{rk}
\newcommand{\OO}{\mathcal{O}}
\newcommand{\vv}{\mathbf{v}}
\newcommand{\ww}{\mathbf{w}}
\newcommand{\cc}{\mathbf{c}}
\newcommand{\T}{\mathcal{T}}
\newcommand{\dd}{\mathrm{d}}
\newcommand{\V}{\mathcal{V}}
\newcommand{\W}{\mathcal{W}}
\newcommand{\HH}{\mathcal{H}}
\DeclareMathOperator{\pt}{pt}
\newcommand{\dee}{\mathbf{d}}
\newcommand{\nn}{\mathbf{n}}
\newcommand{\A}{\mathbf{A}}
\newcommand{\K}{\mathcal{K}}
\DeclareMathOperator{\tnv}{tnv}
\newcommand{\geoP}{\mathbf{P}}
\newcommand{\uu}{\mathbf{u}}
\newcommand{\KK}{\mathfrak{K}}
\newcommand{\II}{\mathbf{I}}
\newcommand{\JJ}{\mathbf{J}}
\newcommand{\mm}{\mathbf{m}}
\newcommand{\1}{\mathbf{1}}
\newcommand{\geoE}{\mathbf{E}}
\newcommand{\geoF}{\mathbf{F}}
\newcommand{\geoH}{\mathbf{H}}
\newcommand{\sla}{\mathfrak{sl}}
\newcommand{\g}{\mathfrak{g}}
\newcommand{\geoPsi}{\mathbf{\Psi}}
\newcommand{\frakE}{\mathfrak{E}}
\newcommand{\frakF}{\mathfrak{F}}
\newcommand{\frakH}{\mathfrak{H}}
\newcommand{\bK}{\mathbf{K}}
\newcommand{\bL}{\mathbf{L}}
\newcommand{\FF}{\mathbb{F}}
\newcommand{\gl}{\mathfrak{gl}}
\newcommand{\osc}{\mathfrak{s}}
\newcommand{\llbracket}{\text{\textlbrackdbl}}
\newcommand{\rrbracket}{\text{\textrbrackdbl}}
\newcommand{\BB}{\mathbb{B}}
\DeclareMathOperator{\lcm}{lcm}
\subjclass[2010]{17B65, 14F43, 05E10}
\thanks{This research was supported by the NSERC Discovery Grant of Alistair Savage and an NSERC PGS-D scholarship}
\begin{document}
\maketitle

\begin{abstract}
The realizations of the basic representation of $\widehat\gl_r$ are known to be parametrized by partitions of $r$ and have an explicit description in terms of vertex operators on the bosonic/fermionic Fock space. In this paper, we give a geometric interpretation of these realizations in terms of geometric operators acting on the equivariant cohomology of certain Nakajima quiver varieties.
\end{abstract}

\tableofcontents

\section*{Introduction} \thispagestyle{empty}

Let $\g$ be a semisimple Lie algebra and denote the corresponding untwisted affine Lie algebra by $\widehat\g$. The basic representation of $\widehat\g$, which we denote by $V_\text{basic} = V_\text{basic}(\widehat\g)$, is the irreducible highest weight representation whose highest weight is the fundamental weight corresponding to the additional node of the affine Dynkin diagram (compared to the corresponding finite Dynkin diagram). The basic representation is so-named since it is, in a sense, the simplest representation of $\widehat\g$. In the late 70's and early 80's mathematicians began constructing explicit realizations of $V_\text{basic}$. The first such realization was given by Lepowsky and Wilson in \cite{lepwil} for $V_\text{basic}(\widehat\sla_2)$. Their construction was later generalized to arbitrary simply-laced affine Lie algebras and twisted affine Lie algebras in \cite{kklw}, and this construction became known as the \emph{principal} realization of $V_\text{basic}$. However, Frenkel and Kac in \cite{frekac}, and Segal in \cite{segal}, gave an entirely different realization of $V_\text{basic}$; this construction was referred to as the \emph{homogeneous} realization. While the principal and homogeneous realizations seemed completely unrelated, it was discovered by Kac and Peterson in \cite{kacpet}, and by Lepowsky in \cite{lep}, that the two realizations depend implicitly on the choice of a so-called maximal \emph{Heisenberg} subalgebra of $\widehat\g$. Indeed, one can associate a realization of $V_\text{basic}$ to each maximal Heisenberg subalgebra of $\widehat\g$.

In this paper will focus on the case where $\g = \gl_r$. While $\gl_r$ is not semisimple, it is a one-dimensional central extension of the semisimple Lie algebra $\sla_r$, and thus has a similar representation theory. Up to conjugacy under the adjoint action of the Kac-Moody group, the maximal Heisenberg subalgebras of a semisimple affine Lie algebra are known to be parametrized by the conjugacy classes of the Weyl group of the corresponding finite-dimensional Lie algebra (see \cite[Proposition of Section 9]{kacpet}). The Weyl group of $\sla_r$ and $\gl_r$ is the symmetric group on $r$ elements, $S_r$, and the conjugacy classes of $S_r$ are in one-to-one correspondence with \emph{partitions} of $r$, i.e.\ $s$-tuples $(r_1,\dots,r_s) \in (\N^+)^s$ such that
\[\textstyle{ r = r_1 + \dots + r_s, \quad \text{and} \quad r_1 \le \dots \le r_s.}\]
Thus, there exists a realization of $V_\text{basic}(\widehat\gl_r)$ for each partition of $r$, the principal and homogeneous realizations corresponding to the two extreme partitions $(r)$ and $(1,\dots,1)$, respectively. The realizations for every partition of $r$ were described by ten Kroode and van de Leur in \cite{tkvdl} using vertex operators acting on bosonic Fock space (a representation of the Heisenberg algebra) and fermionic Fock space (a representation of the Clifford algebra). More precisely, for each partition of $r$, there exists a precise vector space isomorphism between bosonic Fock space and fermionic Fock space (known as the boson-fermion correspondence), and thus the Heisenberg and Clifford algebras may be thought of as operators acting on a common space. The construction in \cite{tkvdl} defines a representation of $\widehat\gl_r$ on bosonic/fermionic Fock space in terms of vertex operators (i.e.\ formal power series) of Heisenberg and Clifford algebra operators. The so-called ``zero-charge" subspace of bosonic/fermionic Fock space is then shown to be isomorphic, as a $\widehat\gl_r$-representation, to $V_\text{basic}$.

In this paper, we give a geometric interpretation of these algebraic realizations of $V_\text{basic}(\widehat\gl_r)$. Our general strategy is as follows. We fix a partition of $(r_1,\dots,r_s)$ of $r$ and consider the moduli space of framed torsion-free sheaves of rank $s$ and second Chern class $n$, $\M(s,n)$. In \cite{savlic}, Licata and Savage showed that, under a suitable torus action, the localized equivariant cohomology of (a disjoint union of infinitely-many copies) of $\M(s,n)$ provides a suitable geometric analogue of bosonic/fermionic Fock space. This is accomplished by defining an action of the Heisenberg and Clifford algebras on this cohomology in terms of the top Chern classes of certain equivariant vector bundles on $\M(s,n)$. The construction given in \cite{savlic} naturally corresponds to the homogeneous realization in \cite{tkvdl}, and thus our first step is to generalize their construction to an arbitrary partition. With this framework in place, we define a new set of operators using vector bundles on certain subvarieties of $\M(s,n)$. We then show that these operators may be expressed as vertex operators of our ``geometric" Heisenberg and Clifford algebra operators, and that the formulas we obtain exactly match those found in the algebraic realization of the action of $\widehat\gl_r$ on $V_\text{basic}$, thus giving us a geometric realization of $V_\text{basic}$

The paper is organized into 4 sections. In Section \ref{section:1}, we review the Heisenberg and Clifford algebra representations on bosonic and fermionic Fock space. We also briefly summarize the various algebraic realizations of $V_\text{basic}$ found in \cite{tkvdl}. In Section \ref{section:2}, we review some of the basic geometric concepts that we will use in subsequent sections. In particular, we will discuss our main geometric object of interest: Nakajima quiver varieties (of which the aforementioned moduli space $\M(s,n)$ is a special case). Section \ref{section:3} will describe our method of constructing geometric operators on the localized equivariant cohomology of quiver varieties from equivariant vector bundles. Finally, in Section \ref{section:4}, we define our geometric analogues of the action of the Heisenberg algebra, Clifford algebra, and $\widehat\gl_r$ on bosonic and fermionic Fock space. We also present our main theorem (Theorem \ref{thm:main}), which is a geometric analogue of Proposition \ref{thm:tkvdl} (the main theorem of \cite{tkvdl}).

\section{The Basic Representation \texorpdfstring{of $\widehat\gl_r$}{}}\label{section:1}

In this first section, we will briefly summarize the known algebraic realizations of the basic representation of $\widehat\gl_r$. In particular, the inequivalent realizations are parametrized by the different partitions of $r$. For a more in-depth treatment of this topic, the reader is encouraged to see \cite{tkvdl} or \cite{kac}. The goal in the subsequent sections will be to give a geometric construction of the representations presented here.

We begin by recalling the $s$-coloured oscillator algebra and the $s$-coloured Clifford algebra, where $s \in \N^+$, along with the associated $s$-coloured bosonic and fermionic Fock spaces.

\begin{defn}($s$-coloured oscillator algebra)\label{defn:osc}
The \emph{$s$-coloured oscillator algebra}, $\osc$, is the complex Lie algebra
\[\textstyle{ \osc := \bigoplus_{\ell=1}^s \left( \bigoplus_{n \in \Z} \C P_\ell(n) \right) \oplus \C c,}\]
with the Lie bracket determined by
\[\textstyle{ [\osc,P_\ell(0)] = 0, \quad [P_\ell(n),P_k(m)] = \frac{1}{n} \delta_{\ell,k} \delta_{n+m,0} c, \; \; n \ne 0,}\]
for all $\ell,k = 1,\dots, s$ and $m,n \in \Z$.
\end{defn}

Note that the $s$-coloured oscillator algebra is often presented in terms of the basis $\{ \alpha_\ell(n) \}$, where
\[ \textstyle{ \alpha_\ell(n) := |n| P_\ell(n), \; n \ne 0, \qquad \alpha_\ell(0) := P_\ell(0).}\]

\begin{defn}($s$-coloured Heisenberg algebra)
The subalgebra
\[\textstyle{ \osc_0 = \bigoplus_{\ell=1}^s \left( \bigoplus_{n \in \Z-\{0\}} \C P_\ell(n)\right) \oplus \C c,}\]
of $\osc$ is the \emph{$s$-coloured Heisenberg algebra}.
\end{defn}

Let $\Lambda \subseteq \C \llbracket t_1, t_2, \dots \rrbracket$ denote the ring of symmetric functions in infinitely many variables. It is well-known that
\[\textstyle{ \Lambda = \C[p_1, p_2, \dots], }\]
where $p_n$ is the $n$-th power sum
\[\textstyle{ p_n = \sum_{i=1}^\infty t_i^n. }\]
Define \emph{$s$-coloured bosonic Fock space} to be the space
\[\textstyle{ \BB := B^{\otimes s}, \quad \text{where } B := \Lambda \otimes_{\C} \C[q,q^{-1}]. }\]
We have a $\Z$-grading on $B$ given by
\[\textstyle{ B = \bigoplus_{c \in \Z} B_c, \quad B_c := \Lambda \otimes \C q^c.}\]
This induces a $\Z^s$-grading on $\BB$ given by
\[\textstyle{ \BB = \bigoplus_{\cc \in \Z^s} \BB_{\cc}, \quad \BB_{\cc} := B_{\cc_1} \otimes \cdots \otimes B_{\cc_s}.}\]
For $\cc = (\cc_1, \dots, \cc_s) \in \Z^s$, we use the notation $|\cc| = \cc_1 + \dots + \cc_s$. We then have a $\Z$-grading on $\BB$ given by
\[\textstyle{ \BB = \bigoplus_{c \in \Z} \BB(c), \quad \BB(c) = \bigoplus_{|\cc|=c} \BB_{\cc}.}\]
One can easily verify that the mapping
\[\textstyle{ P_\ell(n) \mapsto 1^{\otimes \ell-1} \otimes \frac{\partial}{\partial p_n} \otimes 1^{\otimes s - \ell}, \quad n > 0,}\]
\[\textstyle{ P_\ell(-n) \mapsto 1^{\otimes \ell-1} \otimes \frac{1}{n} p_n \otimes 1^{\otimes s-\ell}, \quad n > 0,}\]
\[\textstyle{ P_\ell(0) \mapsto 1^{\otimes \ell-1} \otimes q \frac{\partial}{\partial q} \otimes 1^{\otimes s-\ell}, \quad c \mapsto 1, }\]
defines an irreducible representation of $\osc$ on $\BB$. 

\begin{rmk}\label{rmk:colours}
Notice that the $s$-coloured Heisenberg algebra is isomorphic to the $1$-coloured Heisenberg algebra via relabelling of the indices. Thus, we will often use the term ``Heisenberg algebra" without specifying the number of colours. Moreover, one can show (see, for instance, \cite[Lemma 14.4(a)]{kac}) that $\Lambda^{\otimes s}$ and $\Lambda$ are isomorphic as Heisenberg algebra modules.
\end{rmk}

\begin{defn}($s$-coloured Clifford algebra)
The \emph{$s$-coloured Clifford algebra}, $\Cl$, is the complex associative algebra generated by elements $\psi_\ell(i), \psi^*_\ell(i)$, $\ell=1,\dots,s$, $i \in \Z$, and the relations
\[\textstyle{ \{\psi_\ell(i),\psi^*_k(j)\} = \delta_{ij}\delta_{\ell k}, \quad  \{ \psi_\ell(i), \psi_k(j) \} = \{ \psi_\ell^*(i),\psi^*_k(j) \} = 0.}\]
where $\{x,y\} = xy + yx$.
\end{defn}

A \emph{semi-infinite monomial} is an expression of the form
\[\textstyle{ I = i_1 \wedge i_2 \wedge i_3 \wedge \cdots,}\]
where the $i_n$ are integers such that
\[\textstyle{ i_1 > i_2 > i_3 > \dots, \quad i_{n+1} = i_n - 1, \text{ for } n \gg 0.}\]
The \emph{charge} of a semi-infinite monomial $I$ is the integer $c=c(I)$ such that
\[\textstyle{ i_n = c-n+1, \text{ for all } n \gg 0.}\]
The \emph{energy} of a semi-infinite monomial is
\[\textstyle{ \sum_{n \in \N^+} i_n - (c-n+1).}\]
Let $F$ be the complex vector space with basis the set of all semi-infinite monomials. The charge induces a grading on $F$:
\[\textstyle{ F = \bigoplus_{c \in \Z} F_c,}\]
where $F_c$ is the subspace with basis the set of semi-infinite monomials of charge $c$. Define \emph{wedging} and \emph{contracting} operators $\psi(i)$ and $\psi^*(i)$, $i \in \Z$, on $F$ by
\[\textstyle{ \psi(i) (i_1 \wedge i_2 \wedge \cdots) = \begin{cases} (-1)^k (i_1 \wedge \cdots \wedge i_k \wedge i \wedge i_{k+1} \wedge \cdots), & \text{if } i_k > i > i_{k+1}, \\ 0, & \text{if } i = i_n, \text{ for some } n, \end{cases} }\]
\[\textstyle{ \psi^*(i) (i_1 \wedge i_2 \wedge \cdots) = \begin{cases} (-1)^{k-1} (i_1 \wedge \cdots \wedge i_{k-1} \wedge i_{k+1} \wedge \cdots), & \text{if } i = i_k, \\ 0, & \text{if } i \ne i_n, \text{ for all } n. \end{cases} }\]
It is easy to see that the wedging and contracting operators raise and lower the charge of a semi-infinite monomial by $1$, and so
\[\textstyle{ \psi(i): F_c \to F_{c+1}, \quad \psi^*(i): F_c \to F_{c-1},}\]
for all $i,c \in \Z$.

Define \emph{$s$-coloured fermionic Fock space} to be
\[\textstyle{ \FF := F^{\otimes s}. }\]
By our decomposition of $F$ in terms of the charge, we have a $\Z^s$-grading
\[\textstyle{ \FF = \bigoplus_{\cc \in \Z^s} \FF_{\cc}, \quad \FF_{\cc} := F_{\cc_1} \otimes \cdots \otimes F_{\cc_s}.}\]
This induces a $\Z$-grading on $\FF$ given by
\[\textstyle{ \FF = \bigoplus_{c \in \Z} \FF(c), \quad \FF(c) := \bigoplus_{|\cc|=c} \FF_{\cc}.}\]
One can show that we have a representation of $\Cl$ on $\FF$ given by:
\[\textstyle{ \psi_\ell(i)|_{\FF_{\cc}} \mapsto (-1)^{\cc_1 + \dots + \cc_{\ell-1}} \left( 1^{\otimes \ell-1} \otimes \psi(i) \otimes 1^{\otimes s-\ell} \right),}\]
\[\textstyle{ \psi_\ell^*(i)|_{\FF_{\cc}} \mapsto (-1)^{\cc_1 + \dots + \cc_{\ell-1}} \left( 1^{\otimes \ell-1} \otimes \psi^*(i) \otimes 1^{\otimes s-\ell} \right).}\]
It is straightforward to show that $\FF$ is an irreducible representation of $\Cl$. Moreover, $\FF$ is generated by the so-called \emph{vacuum vector}, $|0 \rangle^{\otimes s}$, where
\[\textstyle{ |0 \rangle = 0 \wedge -1 \wedge -2 \wedge \cdots.}\]
Note that $\FF$ is referred to as the \emph{spin module} in \cite{tkvdl}.

\begin{rmk}
We have used here the convention that Clifford algebra generators of different colours anti-commute. However, the Clifford algebra is sometimes defined by letting generators of different colours commute (see for example \cite[Section 1.2]{savlic}). In this case it is necessary to modify the action of $\Cl$ on $\FF$ to
\[\textstyle{ \psi_\ell(i)|_{\FF_{\cc}} \mapsto  1^{\otimes \ell-1} \otimes \psi(i) \otimes 1^{\otimes s-\ell}, \qquad \psi_\ell^*(i)|_{\FF_{\cc}} \mapsto 1^{\otimes \ell-1} \otimes \psi^*(i) \otimes 1^{\otimes s-\ell}.}\]
\end{rmk}

\begin{rmk}
As was the case with the Heisenberg algebra, the $s$-coloured Clifford algebra is isomorphic to the $1$-coloured Clifford algebra via relabelling of the indices. Moreover, by identifying the $s$-coloured and $1$-coloured Clifford algebras, $s$-coloured  fermionic Fock space is isomorphic to $1$-coloured fermionic Fock space. Thus, we will often refer to the Clifford algebra and fermionic Fock space without specifying the number of colours.
\end{rmk}

In the sequel, it will be useful to think of fermionic Fock space not only in terms of semi-infinite monomials, but also in terms of \emph{Young diagrams}. Recall that a Young diagram is a finite collection of boxes arranged in rows and columns such that the number of boxes in the $i$-th row is greater than or equal to the number of boxes in the $(i+1)$-st row. There are different conventions regarding the way to draw Young diagrams, but we will choose the English notation. That is, our rows will be left-justified and subsequent rows are placed underneath the previous one, as illustrated in Figure~\ref{diagram:young}.
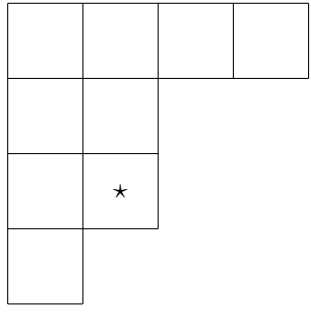
\begin{figure}
\begin{tikzpicture}

\draw (0,0) -- (0,-4); \draw (1,0) -- (1,-4); \draw (2,0) -- (2,-3); \draw (3,0) -- (3,-1); \draw (4,0) -- (4,-1);

\draw (0,0) -- (4,0); \draw (0,-1) -- (4,-1); \draw (0,-2) -- (2,-2); \draw (0,-3) -- (2,-3); \draw (0,-4) -- (1,-4);

\draw (1.5,-2.5) node {$\star$};

\end{tikzpicture}
\caption{Young diagram} \label{diagram:young}
\end{figure}
We say that a box is in the $(i,j)$-th \emph{position} if it is in the $i$-th row and $j$-th column of the diagram. For example, in Figure~\ref{diagram:young}, the box labelled with a ``$\star$" is in the $(3,2)$-th position. We may thus view a Young diagram $\lambda$ as being a subset of $(\N^+)^2$, where $(i,j) \in \lambda$ if and only if $\lambda$ has a box in the $(i,j)$-th position. For any Young diagram $\lambda$ and any $k \in \N^+$, let $\lambda_k$ denote the number of boxes in its $k$-th row ($\lambda_k = 0$ if there are no boxes in the $k$-th row). For any $(i,j) \in (\N^+)^2$, define the \emph{arm} and \emph{leg length} of $(i,j)$ to be
\[\textstyle{ a_\lambda(i,j) := \lambda_i - j, \quad \text{and} \quad \ell_\lambda(i,j) := \max\{ i' \in \N^+ \; | \; (i',j) \in \lambda \} - i,}\]
respectively. Intuitively, for each $(i,j) \in \lambda$, $a_\lambda(i,j)$ and $\ell_\lambda(i,j)$ count the number of boxes to the right of and below $(i,j)$, respectively. For two Young diagrams $\lambda$ and $\mu$, we define the \emph{relative hook length} of $(i,j)$ to be
\begin{equation}\textstyle{\label{eq:hook}
h_{\lambda,\mu}(i,j) := a_\lambda(i,j) + \ell_\mu(i,j) + 1.
}\end{equation}
If $\lambda = \mu$, we will simply write $h_\lambda(i,j) := h_{\lambda,\lambda}(i,j)$. Finally, we define the \emph{residue} of box in the $(i,j)$-th position to be $j-i$.

Clearly, every weakly decreasing sequence of non-negative integers
\[\textstyle{ (\lambda_1, \lambda_2, \lambda_3, \dots), }\]
such that $\lambda_k=0$ for $k \gg 0$ uniquely defines a Young diagram. Thus, one has a bijection between the set of semi-infinite monomials of a fixed charge and the set of Young diagrams. Indeed, suppose
\[\textstyle{ I = i_1 \wedge i_2 \wedge i_3 \wedge \cdots,}\]
is a semi-infinite monomial of charge $c$. Define $\lambda(I)$ to be the Young diagram determined by
\begin{equation}\textstyle{\label{eq:montoyoung}
\lambda(I)_k = i_k - c + k - 1.
}\end{equation}

\begin{lemma}\label{lem:montoyoung}
The mapping $I \mapsto \lambda(I)$ is a bijection from the set of semi-infinite monomials of charge $c$ to the set of Young diagrams. Moreover, if $I$ is a semi-infinite monomial of energy $n$, then $\lambda(I)$ consists of $n$ boxes.
\end{lemma}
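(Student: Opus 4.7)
The plan is to prove all three assertions by direct unpacking of the definitions, with the key observation being that the shift $\lambda(I)_k = i_k - c + k - 1$ exactly converts the condition ``strictly decreasing'' on the $i_k$ into the condition ``weakly decreasing'' on the $\lambda(I)_k$.

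First I would verify that $\lambda(I)$ really is a Young diagram. The non-negativity $\lambda(I)_k \geq 0$ follows by downward induction on $k$: for $k$ sufficiently large we have $i_k = c-k+1$, so $\lambda(I)_k = 0$; and if $i_{k+1} \geq c - k$, then the strict inequality $i_k > i_{k+1}$ combined with integrality forces $i_k \geq c - k + 1$, i.e.\ $\lambda(I)_k \geq 0$. The weakly-decreasing condition $\lambda(I)_k \geq \lambda(I)_{k+1}$ is immediate from $i_k \geq i_{k+1}+1$, and the same computation for $k \gg 0$ shows $\lambda(I)_k = 0$ eventually, so only finitely many $\lambda(I)_k$ are nonzero.

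Next I would exhibit the inverse map explicitly. Given any Young diagram $\lambda = (\lambda_1, \lambda_2, \dots)$, set $i_k := \lambda_k + c - k + 1$, and let $I(\lambda) := i_1 \wedge i_2 \wedge \cdots$. The inequality $\lambda_k \geq \lambda_{k+1}$ translates to $i_k - i_{k+1} = \lambda_k - \lambda_{k+1} + 1 \geq 1$, so the $i_k$ are strictly decreasing; and since $\lambda_k = 0$ for $k \gg 0$, we have $i_k = c - k + 1$ for $k \gg 0$, so $I(\lambda)$ is a semi-infinite monomial of charge $c$. The maps $I \mapsto \lambda(I)$ and $\lambda \mapsto I(\lambda)$ are inverse by construction, proving the bijection.

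Finally, for the energy statement, I would simply substitute into the definition:
\[
\sum_{n \in \N^+} \bigl( i_n - (c - n + 1) \bigr) = \sum_{n \in \N^+} \lambda(I)_n,
\]
which is the total number of boxes in $\lambda(I)$ (the sum is finite since $\lambda(I)_n = 0$ for $n \gg 0$).

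I do not expect any genuine obstacle here; the whole content of the lemma is bookkeeping of the shift by $c - k + 1$, which is a standard device in the theory of the boson-fermion correspondence. The only thing to be slightly careful about is justifying the step $i_k > i_{k+1} \Rightarrow i_k \geq i_{k+1} + 1$, which uses integrality of the $i_k$ and is what ensures non-negativity of the resulting parts.
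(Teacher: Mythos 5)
Your proof is correct and is exactly the standard bookkeeping argument the paper has in mind; the paper itself states Lemma \ref{lem:montoyoung} without proof, treating it as routine. Your verification of weak decrease, eventual vanishing, the explicit inverse $i_k = \lambda_k + c - k + 1$, and the energy-to-box-count identity covers everything that needs to be said.
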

Hence, by Lemma \ref{lem:montoyoung}, we may think of $F_c$ as the complex vector space with basis the set of all Young diagrams, thus giving us a Young diagram interpretation of fermionic Fock space.

It is well-known that the set of \emph{Schur functions} $\{s_\lambda\}$, where $\lambda$ runs over all Young diagrams, form a basis of $\Lambda$ (see statement (3.3) of \cite{mac}). We then have a vector space isomorphism
\begin{equation}\textstyle{ \label{eq:bfcorr}
B_c \to F_c, \quad s_\lambda \otimes q^c \mapsto \lambda,
}\end{equation}
for each $c \in \Z$. This induces an isomorphism $\BB \xrightarrow{\cong} \FF$, called the \emph{boson-fermion correspondence}.

We now recall a few facts about the representation theory of semisimple Lie algebras. Let $\g$ be a finite-dimensional semisimple Lie algebra and let $\widehat\g$ denote the corresponding untwisted affine Lie algebra of $\g$. Let $V$ be a representation of $\widehat\g$. One can show that for any Heisenberg subalgebra of $\widehat\g$ (under certain restrictions), the representation $V$ is isomorphic, as a Heisenberg algebra representation, to a direct sum of copies of the ring of symmetric functions $\Lambda$ (see \cite[Lemma 14.4(b)]{kac}). In particular, given a Heisenberg subalgebra of $\widehat\g$, one can give an explicit realization of $V_\text{basic}(\widehat\g)$ completely in terms of (a direct sum of copies of) $\Lambda$. Thus, to classify all such realizations, one needs to classify all inequivalent Heisenberg subalgebras of $\widehat\g$. For $\g$ semisimple, it is known that, up to conjugacy under the adjoint action of the associated Kac-Moody group, the Heisenberg subalgebras of $\widehat\g$ are parametrized by the conjugacy classes of the Weyl group of $\g$ (see \cite[Section 9]{kacpet}, in which the authors construct all possible Heisenberg subalgebras of $\widehat\g$). As mentioned in the introduction, the Heisenberg subalgebras of $\widehat\sla_r$ are thus parametrized by partitions of $r$. The methods of \cite[Section 9]{kacpet} can be extended to $\widehat\gl_r$, thus producing one Heisenberg subalgebra of $\widehat\gl_r$ for each partition of $r$.

The most well-known Heisenberg subalgebra of $\widehat\gl_r$ is the \emph{principal} Heisenberg subalgebra, which corresponds to the partition $\underline{r} = (r)$. Let $E_i, F_i, H_i$, for $i=0,1,\dots,r-1$ be the Chevalley generators of $\widehat\sla_r$. Then recall that the principal Heisenberg subalgebra of $\widehat\gl_r$ is given by
\[\textstyle{ \alpha(n) = \begin{cases} (E_0 + E_1 + \dots + E_{r-1})^n, & \text{for } n>0, \\ (F_0 + F_1 + \dots + F_{r-1})^{-n} & \text{for } n<0,\end{cases}}\]
By setting
\[\textstyle{ \alpha(0) = I \otimes 1,}\]
the $\alpha(n)$, $n \in \Z$, together with the central element $c \in \widehat\gl_r$, form a basis of a $1$-coloured oscillator algebra. To construct the Heisenberg subalgebra associated to an arbitrary partition $(r_1,\dots,r_s)$, we divide the matrices in $\gl_r$ into $s^2$ blocks, with the $(i,j)$-th block being of size $r_i \times r_j$ for all $i,j = 1,\dots,s$. The $(\ell,\ell)$-th diagonal block corresponds to the subalgebra $\gl_{r_\ell} \subseteq \gl_r$, and on the level of affine algebras, $\widehat\gl_{r_\ell} \subseteq \widehat\gl_r$. Let $E_k^\ell, F_k^\ell, H_k^\ell$ be the Chevalley generators for $\widehat\sla_{r_\ell} \subseteq \widehat\gl_{r_\ell}$. The Heisenberg subalgebra corresponding to the partition $(r_1,\dots,r_s)$, presented as an $s$-coloured algebra, is given by
\[\textstyle{ \alpha_\ell(n) = \begin{cases} (E^\ell_0 + E^\ell_1 + \dots + E^\ell_{r_\ell-1})^n, & \text{for } n>0, \\ (F_0^\ell + F_1^\ell + \dots + F_{r_\ell-1}^\ell)^{-n}, & \text{for } n<0, \end{cases} }\]
for $\ell=1,\dots,s$. It is worth noting that, restricted to the $\ell$-th colour, the $\alpha_\ell(n)$ determine the principal Heisenberg subalgebra of $\widehat\gl_{r_\ell}$. Set
\[\textstyle{ \alpha_\ell(0) = I_\ell \otimes 1,}\]
where $I_\ell$ is the identity matrix in $\gl_{r_\ell}$. Then the $\alpha_\ell(n)$ define an $s$-coloured oscillator algebra.

The various realizations of $V_\text{basic} = V_\text{basic}(\widehat\gl_r)$ were described explicitly in \cite{tkvdl} in terms of vertex operators on bosonic and fermionic Fock space. We briefly summarize the main theorem of that paper here. Fix a partition, $(r_1,\dots,r_s)$, of $r$. Let $R' = \lcm\{r_1,\dots,r_s\}$ and define
\[\textstyle{ R = \begin{cases} R', & \text{if } R' \left( \frac{1}{r_i} + \frac{1}{r_j}\right) \in 2 \Z \text{ for all } i,j, \\ 2R', & \text{if } R' \left( \frac{1}{r_i} + \frac{1}{r_j}\right) \notin 2 \Z \text{ for some } i,j. \end{cases} }\]
Define the \emph{normal ordering} on the $s$-coloured Clifford algebra generators
\[\textstyle{ :\psi_\ell(i) \psi^*_k(j): \; = \begin{cases} \psi_\ell(i) \psi^*_k(j), & \text{if } j>0, \\ -\psi^*_k(j) \psi_\ell(i), & \text{if } j \le 0. \end{cases}}\]
We introduce the formal $s$-coloured fermionic and bosonic fields:
\[\textstyle{ \psi_\ell(z) := \sum_{k \in \Z} \psi_\ell(i) z^{(R/r_\ell)k}, \quad \psi_\ell^*(z) := \sum_{k \in \Z} \psi_\ell^*(k) z^{-(R/r_\ell)k},}\]
\[\textstyle{ \alpha_\ell(z) = \sum_{k \in \Z} \alpha_\ell(k) z^{-(R/r_\ell)k} := \; :\psi_\ell(z) \psi^*_\ell(z):,}\]
This brings us to the main theorem of \cite{tkvdl} (as stated at the end of the introduction).

\begin{prop}\label{thm:tkvdl}
Let $\omega=e^{2\pi i/R}$. Then the homogeneous components of
\[\textstyle{ :\psi_\ell(\omega^p z) \psi^*_k(\omega^q z): - \delta_{k \ell} \frac{\omega^{(R/r_\ell)(p-q)}}{1-\omega^{(R/r_\ell)(p-q)}}, \quad {{(\ell \ne k, \; 1 \le p \le r_\ell, \; 1 \le q \le r_k)} \atop { \text{or } (\ell = k, \; 1 \le p \ne q \le r_\ell),}}}\]

\[\textstyle{ :\psi_\ell(z) \psi_\ell^*(z):, \quad 1 \le \ell \le s,}\]
together with the identity operator, span a Lie algebra of operators on $\FF$ ($s$-coloured fermionic Fock space) that is isomorphic to $\widehat\gl_r$. Moreover, via this identification with $\widehat\gl_r$,
\[\textstyle{ \FF(0) \cong V_\textup{basic},}\]
as $\widehat\gl_r$-modules and the $\alpha_\ell(k)$, $k \in \Z-\{0\}$ (resp.\ $k \in \Z$), together with the identity operator, give a basis of the Heisenberg subalgebra (resp.\ oscillator subalgebra) of $\widehat\gl_r$ associated to the partition $\underline{r}$.
\end{prop}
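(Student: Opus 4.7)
The plan is to prove the proposition by direct OPE-and-commutator calculations in the free-fermion formalism, then match the resulting Lie algebra to a presentation of $\widehat\gl_r$ adapted to the partition $(r_1,\dots,r_s)$. I would begin with the fundamental contraction
\[ \psi_\ell(z)\psi_k^*(w) = \frac{\delta_{\ell k}}{z-w} + :\psi_\ell(z)\psi_k^*(w):, \]
which is immediate from the Clifford relations together with the vacuum expectation on $|0\rangle^{\otimes s}$. Applying Wick's theorem then expresses the commutator $[:\psi_\ell(z_1)\psi_m^*(w_1):,\,:\psi_k(z_2)\psi_n^*(w_2):]$ as a sum of (a) normal-ordered bilinears coming from single contractions and (b) a scalar term from double contractions; the latter will produce the central extension.

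Next, I would specialize the four arguments to $(\omega^p z,\omega^q z,\omega^{p'} w,\omega^{q'} w)$ and extract Fourier coefficients using the mode expansions dictated by the exponents $R/r_\ell$ in the definitions of $\psi_\ell(z)$ and $\psi_\ell^*(z)$. The subtracted constant $-\delta_{\ell k}\omega^{(R/r_\ell)(p-q)}/(1-\omega^{(R/r_\ell)(p-q)})$ is precisely the regularized zero-mode contribution of the bilinear, coming from the geometric series $\sum_{n\ge 0}\omega^{(R/r_\ell)(p-q)n}$; its subtraction is what ensures that each homogeneous component defines a well-defined operator on $\FF$ in the cases listed. A careful extraction of coefficients then yields commutation relations of the schematic form $[X,Y]=Z+\kappa\cdot\id$, where $Z$ is again a generator of the same type and $\kappa\in\C$ is the central contribution.

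To identify the resulting Lie algebra with $\widehat\gl_r$ at level $1$, I would use the following observation: decompose $\gl_r$ into $s^2$ blocks according to the partition and let $\sigma\in\GL_r$ be the matrix acting by the standard $r_\ell$-cycle on the $\ell$-th diagonal block. Then $\Ad(\sigma)$ acts diagonally on the block matrix units $E_{(\ell,p),(k,q)}$ by the scalar $\omega^{(R/r_\ell)p-(R/r_k)q}$, so the joint eigenbasis of $\Ad(\sigma)$ on $\gl_r$ is indexed by exactly the pairs appearing in the statement. Transporting this eigenbasis to the loop algebra $\gl_r[t,t^{-1}]\oplus\C c$ and comparing with the modes computed above gives the desired Lie algebra isomorphism. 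Once this is in place, the identification $\FF(0)\cong V_\text{basic}$ reduces to checking that $|0\rangle^{\otimes s}$ is a highest weight vector of weight $\Lambda_0$ at level $1$: it is annihilated by all positive-energy modes (by construction of the fields), while its weight and level are read off directly from the actions of the $\alpha_\ell(0)$ and $c$. Uniqueness of the irreducible integrable level-$1$ highest weight $\widehat\gl_r$-module with this highest weight (see \cite{kac}) closes this step. Finally, the identification of the $\alpha_\ell(k)$ with the Heisenberg subalgebra associated to $(r_1,\dots,r_s)$ follows by matching the definition of $\alpha_\ell(z)$ against the explicit formulas $\alpha_\ell(n)=(E_0^\ell+\dots+E_{r_\ell-1}^\ell)^n$ recalled before the statement, using the boson-fermion correspondence within each colour.

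The principal obstacle is the commutator calculation in the second step: the double-contraction piece producing the central term must be regularized via formal $\delta$-function identities, and the various sign conventions from anti-commuting fermions of different colours have to be tracked consistently through the Wick expansion. Once the closed-form commutators are in hand, everything else is essentially linear algebra and standard representation theory.
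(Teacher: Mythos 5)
The paper does not prove this proposition; it is quoted verbatim as the main theorem of ten Kroode and van de Leur \cite{tkvdl}, with Lemma \ref{lem:glaction} (extracting Fourier coefficients) carrying a pointer to \cite[Lemma~1.21]{me} for the bookkeeping. So there is no in-paper proof to compare against, and the relevant question is whether your sketch matches the original argument in \cite{tkvdl} --- and at the conceptual level, it does: Wick's theorem for charged free fermions, extraction of homogeneous components, recognition of the closed Lie algebra as $\widehat\gl_r$ via a $\Z$-gradation coming from a block-cyclic element of $\GL_r$, and highest-weight theory to get $\FF(0)\cong V_\textup{basic}$ are precisely the ingredients of the $(r_1,\dots,r_s)$-realization.

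That said, one step in your write-up is stated incorrectly and would cause trouble if carried out literally: with the mode conventions of this paper the basic contraction is \emph{not} $\psi_\ell(z)\psi_k^*(w)=\delta_{\ell k}/(z-w)+{:}\psi_\ell(z)\psi_k^*(w){:}$. Because $\psi_\ell(z)=\sum_i\psi_\ell(i)\,z^{(R/r_\ell)i}$ and $\psi_\ell^*(w)=\sum_j\psi_\ell^*(j)\,w^{-(R/r_\ell)j}$, the vacuum pairing produces a geometric series in $(w/z)^{R/r_\ell}$, so the singular part is of the form $\delta_{\ell k}\bigl(1-(w/z)^{R/r_\ell}\bigr)^{-1}$ rather than a simple pole in $z-w$. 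This is exactly why, upon specializing to $w=\omega^q z$ and $z\mapsto\omega^p z$, the regularizing constant takes the specific form $\delta_{\ell k}\,\omega^{(R/r_\ell)(p-q)}/\bigl(1-\omega^{(R/r_\ell)(p-q)}\bigr)$ rather than something depending on $\omega^p z-\omega^q z$. If you start from the $1/(z-w)$ OPE the subtracted constant you derive will not match the one in the statement. The rest --- the joint $\Ad(\sigma)$-eigenbasis on block matrix units indexed by $((\ell,p),(k,q))$, the central term from double contractions, and the level-$1$ highest-weight check on $|0\rangle^{\otimes s}$ --- is correctly laid out and corresponds to what ten Kroode and van de Leur actually do, so once you replace the contraction by the correct one the argument goes through. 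You should also be aware that the off-diagonal terms with $\ell\ne k$ contribute \emph{no} singular part at all in this free-field picture (since $\{\psi_\ell(i),\psi_k^*(j)\}=0$ for $\ell\ne k$), which is consistent with there being no subtracted constant in that case.
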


Note that the isomorphism $\FF(0) \xrightarrow{\cong} V_\text{basic}$ in Proposition \ref{thm:tkvdl} is determined by $|0\rangle^{\otimes s} \mapsto v_0$.

The goal of this paper is to give a geometric version of the various realizations of $V_\text{basic}$. Our strategy will be as follows. We first construct geometric analogues the oscillator and Clifford algebra representations on bosonic and fermionic Fock space. Our method for doing this will be similar to \cite[Section 3]{savlic}. Next, we would like to construct an action of $\widehat\gl_r$ on our geometric fermionic Fock space in the spirit of Proposition \ref{thm:tkvdl}. To do this, we recall that $\widehat\gl_r$ is generated by the Chevalley generators $E_k, F_k, H_k$, $k=0,1,\dots,r-1$ of $\widehat\sla_r$ and loops on the identity $I \otimes t^n$, $n \in \Z$. Thus, to define an action of $\widehat\gl_r$ on our geometric Fock space, it is enough to define the action of $E_k, F_k, H_k$ and $I \otimes t^n$. Algebraically, the action of $\widehat\gl_r$ on fermionic Fock space is given by the homogeneous components of vertex operators as in Proposition \ref{thm:tkvdl}. We explicitly describe the algebraic action of $E_k, F_k$ and $I \otimes t^n$ in Lemma \ref{lem:glaction} below (the action of $H_k$ is determined by $E_k$ and $F_k$ since $H_k = [E_k,F_k]$). First, we observe that for each $k = 0,1,\dots, r-1$, we can write
\[\textstyle{ k = r_1 + \dots + r_{\ell-1} + k',}\]
for unique $1 \le \ell \le s$ and $0 \le k' \le r_{\ell-1}$.

\begin{lemma}\label{lem:glaction}
The action of $\widehat\gl_r$ on $\FF$ from Proposition \ref{thm:tkvdl} is given by the map $\rho:\widehat\gl_r \to \gl(\FF$), described below. For each $k = 0,1,\dots,r-1$, write $k = r_1 + \dots + r_{\ell-1} + k'$. If $k' \ne 0$, then
\begin{equation}\textstyle{\label{eq:E1}
\rho(E_k) = \sum_{i \in \Z} \psi_\ell(k' + i r_\ell) \psi_\ell^*(k' + i r_\ell + 1),
}\end{equation} \vspace{-10pt}
\begin{equation}\textstyle{
\rho(F_k) = \sum_{i \in \Z} \psi_\ell(k' + i r_\ell + 1) \psi_\ell^*(k' + i r_\ell).
}\end{equation}
If $k'=0$ and $\ell \ne 1$,
\begin{equation}\textstyle{
\rho(E_k) = \sum_{i \in \Z} \psi_{\ell-1}((i+1)r_{\ell-1}) \psi_{\ell}^*(i r_{\ell} + 1),
}\end{equation}\vspace{-10pt}
\begin{equation}\textstyle{
\rho(F_k) = \sum_{i \in \Z} \psi_{\ell} (ir_{\ell} + 1) \psi_{\ell-1}^*((i+1) r_{\ell-1}).
}\end{equation}
If $k'=0$ and $\ell=1$, (i.e.\ $k=0$)
\begin{equation}\textstyle{
\rho(E_0) = \sum_{i \in \Z} \psi_s(i r_s) \psi_1^*(i r_1 + 1),
}\end{equation}\vspace{-10pt}
\begin{equation}\textstyle{
\rho(F_0) = \sum_{i \in \Z} \psi_1(i r_1 + 1) \psi_s^*(i r_s).
}\end{equation}
Finally,
\begin{equation}\textstyle{
\rho(I \otimes t^n) = \sum_{\ell = 1}^s \alpha_\ell(n r_\ell).
}\end{equation}
\end{lemma}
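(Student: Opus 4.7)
The plan is to unpack the isomorphism of Proposition~\ref{thm:tkvdl} by identifying, for each Chevalley generator $E_k, F_k$ and each loop element $I\otimes t^n$ of $\widehat\gl_r$, a specific homogeneous component (or linear combination of components) of the vertex operators that appear in that proposition. The block structure induced by the partition $(r_1,\dots,r_s)$ on $\gl_r$ makes the matching transparent: writing $k = r_1+\dots+r_{\ell-1}+k'$, the generator $E_k$ with $k'\ne 0$ lives inside the $\ell$-th diagonal block $\widehat\gl_{r_\ell}$ and, with respect to its principal Heisenberg structure, is a short-root vector; with $k'=0$ and $\ell\ne 1$ it connects the $(\ell-1)$-th and $\ell$-th blocks; and $E_0$ closes the affine cycle between blocks $s$ and $1$ via a shift by $t$. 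Analogous statements hold for $F_k$, and $I\otimes t^n$ decomposes as $\sum_\ell I_\ell\otimes t^n$, where $I_\ell$ is the identity of $\gl_{r_\ell}$.

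The key computation is the expansion
\[\textstyle{ :\psi_\ell(\omega^p z)\psi_k^*(\omega^q z): \;=\; \sum_{i,j \in \Z} :\psi_\ell(i)\psi_k^*(j):\, \omega^{(R/r_\ell)pi - (R/r_k)qj}\, z^{(R/r_\ell)i - (R/r_k)j}. }\]
The coefficient of $z^N$ restricts to those pairs $(i,j)$ with $(R/r_\ell)i - (R/r_k)j = N$, and a weighted average over the allowed values of $p$ (resp.\ $q$), which generate the $r_\ell$-th (resp.\ $r_k$-th) roots of unity via $\omega^{R/r_\ell}$ (resp.\ $\omega^{R/r_k}$), isolates the desired residue class of $i$ modulo $r_\ell$ (resp.\ $j$ modulo $r_k$). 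Taking $\ell = k$, $j = i+1$, and picking out $i \equiv k' \pmod{r_\ell}$ yields the claimed formula for $\rho(E_k)$ when $k'\ne 0$; the inter-block and affine cases are handled analogously with the colour pairs $(\ell-1,\ell)$ and $(s,1)$, respectively. The formula $\rho(I\otimes t^n) = \sum_\ell \alpha_\ell(nr_\ell)$ follows directly from $\alpha_\ell(z) = :\psi_\ell(z)\psi_\ell^*(z):$ combined with the observation that, inside the principal Heisenberg of $\gl_{r_\ell}$, loops on the identity matrix are indexed by multiples of $r_\ell$.

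The main obstacle is bookkeeping: one must track the signs from normal ordering, the scalar counterterm $-\delta_{k\ell}\omega^{(R/r_\ell)(p-q)}/(1-\omega^{(R/r_\ell)(p-q)})$ in the diagonal case, and the boundary case $k=0$, where the loop shift in $t$ emerges from the periodicity of the indexing. Once these are handled, the verification reduces to a straightforward (if tedious) check that the extracted operators equal the claimed formulas; consistency with the defining relations of $\widehat\gl_r$ is then automatic from Proposition~\ref{thm:tkvdl}.
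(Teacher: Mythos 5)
Your proposal takes essentially the same approach as the paper, which itself only sketches the idea ("picking out the appropriate components of the vertex operators in Proposition \ref{thm:tkvdl}") and defers the detailed computation to the author's thesis. The Fourier-type averaging over the roots of unity $\omega^{R/r_\ell}$ (and $\omega^{R/r_k}$) that you describe is precisely the standard technique for isolating the residue classes $i \equiv k' \pmod{r_\ell}$ from the expansion of $:\psi_\ell(\omega^p z)\psi_k^*(\omega^q z):$, so your outline fills in the structure the paper leaves implicit; the remaining work is indeed just the bookkeeping you flag (tracking $p\ne q$ in the diagonal case, the scalar counterterm dropping out of nonzero-degree components, and the wrap-around at $k=0$).
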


\begin{proof}
The proof consists of picking out the appropriate components of the vertex operators in Proposition \ref{thm:tkvdl}, thus we leave the details to the reader. For an explicit proof, see \cite[Lemma 1.21]{me}.
\end{proof}

%%%%%%%%%%%%%%%%%%%%%%%%%%%%%%%%%%%%%%%%%%%%%%%%%%%%%%%%%%%%%%%%%%%%%%%%%%%%%%%%
%
% Section 2
%
%%%%%%%%%%%%%%%%%%%%%%%%%%%%%%%%%%%%%%%%%%%%%%%%%%%%%%%%%%%%%%%%%%%%%%%%%%%%%%%%

\section{Quiver Varieties}\label{section:2}

In this section, we review some of the geometric concepts we will need to construct our geometric realizations of $V_\text{basic}(\widehat\gl_r)$. Our main geometric objects of interest will be \emph{Nakajima quiver varieties}; these are examples of \emph{geometric (invariant) quotients}. We will briefly recall the necessary concepts of geometric invariant theory as they are needed. For a more in-depth treatment of geometric invariant theory see \cite{mum}. For more information on Nakajima quiver varieties see \cite{nak2} and \cite{nak1}.

Throughout this section, and all subsequent sections, for a group $G$ acting on a set $X$, we denote by $X^G$ the set of $G$-fixed points of $X$.

Let $Q=(Q_0,Q_1)$ be a quiver. For all $\rho \in Q_1$, write $t(\rho)$ and $h(\rho)$ for the tail and the head of $\rho$, respectively.  Let $\widetilde{Q}=(\widetilde{Q}_0,\widetilde{Q}_1)$ be the double quiver of $Q$. That is,
\[\textstyle{ \widetilde{Q}_0 = Q_0, \qquad \text{and} \qquad \widetilde{Q}_1 = Q_1 \cup \overline{Q}_1, }\]
where $\overline{Q}_1$ is the set of arrows in $Q_1$ with orientation reversed. We then have a natural involution $\bar{ }: \QQ_1 \rightarrow \QQ_1$, which sends every arrow in $Q_1$ to its corresponding reverse arrow in $\overline{Q}_1$ and vice versa. We define the function $\varepsilon: \QQ_1 \rightarrow \{\pm1\}$ by
\[\textstyle{ \varepsilon(\rho) = \begin{cases} 1, & \text{if } \rho \in Q_1, \\ -1, & \text{if } \rho \in \overline{Q}_1. \end{cases}}\]
Let $V = \bigoplus_{k \in \widetilde{Q}_0} V_k$ and $W=\bigoplus_{k \in \QQ_0} W_k$ be finite-dimensional $\widetilde{Q}_0$-graded complex vector spaces. Define
\[\textstyle{ E_V = \bigoplus_{\rho \in \QQ_1} \Hom(V_{t(\rho)}, V_{h(\rho)}), \qquad L_{V,W} = \bigoplus_{k \in \QQ_0} \Hom(V_k, W_k). }\]
We define a ``multiplication" $E_V \times E_V \rightarrow L_{V,V}$ given by
\[\textstyle{ (AB)_k = \sum_{\rho \in \QQ_1, t(\rho)=k} A_{\bar{\rho}} B_\rho, }\]
for all $A=(A_\rho), B=(B_\rho) \in E_V$. We then define
\[\textstyle{ \mathbf{M} = \mathbf{M}(V,W) = E_V \oplus L_{W,V} \oplus L_{V,W}. }\]
The function $\varepsilon:\QQ_1 \rightarrow \{\pm1\}$ induces a function $\varepsilon:E_V \rightarrow E_V$ given by
\[\textstyle{ \varepsilon(C)_\rho = \varepsilon(\rho) C_\rho. }\]
We have a symplectic form on $\omega$ on $\mathbf{M}$ given by
\[\textstyle{ \omega((C_1,i_1,j_1), (C_2,i_2,j_2)) = \tr(\varepsilon(C_1)C_2) + \tr(i_1 j_2 - i_2 j_1). }\]
Let $G_V = \prod_{k \in \QQ_0} \GL(V_k)$. Then $G_V$ acts on $\mathbf{M}$ via
\[\textstyle{ g \cdot (C,i,j) = (gCg^{-1}, gi, jg^{-1}), }\]
for all $g \in G_V$ and $(C,i,j) \in \bold{M}$. Then $G_V$ is an algebraic group whose action on $\mathbf{M}$ preserves the symplectic form $\omega$. The moment map vanishing at the origin is given is
\[\textstyle{ \mu: \mathbf{M} \to L_{V,V}, \quad (C,i,j) \mapsto \varepsilon(C)C + ij, }\]
where the Lie algebra $L_{V,V}$ of $G_V$ is identified with its dual via the trace.

\begin{rmk}
Note that the set $\mathbf{M}$ and the map $\mu$ do not depend on the orientation of the quiver $Q$. Thus, our construction above, as well as the definition of quiver varieties below, depend only on the underlying (undirected) graph of $Q$.
\end{rmk}

\begin{defn}[Invariant, Stable]\label{defn:stable} Let $S=\bigoplus_{k \in \QQ_0} S_k$, where each $S_k$ is a subspace of $V_k$. For $C \in E_V$, we say that $S$ is $C$\emph{-invariant} if $C_\rho(S_{t(\rho)}) \subseteq S_{h(\rho)}$ for all $\rho \in \QQ_1$.

An element $(C,i,j) \in \mathbf{M}$ is called \emph{stable} if the following condition holds: if $S$ is a $\QQ_0$-graded subspace of $V$ such that $S$ is $C$-invariant and $\im(i) \subseteq S$, then $S=V$. We denote by $\mathbf{M}^{\st}$ the set of stable points of $\mathbf{M}$.
\end{defn}

Let $G$ be a reductive group acting freely on an algebraic variety $X$. Then for every $G$-invariant open set $U \subseteq X$, $G$ acts on $\OO_X(U)$ by
\[\textstyle{ (g \cdot f)(x) = f(g^{-1} \cdot x),}\]
for all $g \in G$, $f \in \OO_X(U)$ and $x \in U$. A function $f \in \OO_X(U)^G$ then determines a well-defined function $\overline{f}: U/G \to \C$ given by
\[\textstyle{ \overline{f}( G \cdot x) = f(x).}\]
The \emph{geometric quotient} (of $X$ by $G$), is the topological quotient $\pi: X \twoheadrightarrow X/G$ equipped with the structure sheaf
\[\textstyle{ \OO_{X/G}(U) = \left\{ \overline{f}: U \to \C \; | \; f \in \OO_X(\pi^{-1}(U))^G \right\}.}\]
Note that this definition of a geometric quotient is a slightly simplified version of \cite[Definition 0.6]{mum}. One can show that $\mathbf{M}^{\st}_0$ is a quasi-affine algebraic variety and that $G_V$ acts freely on $\mathbf{M}^{\st}_0$. 

\begin{defn}[Nakajima Quiver Variety]\label{defn:quivervariety}
The \emph{Nakajima quiver variety} (associated to $Q$) is the geometric quotient
\[\textstyle{ \MM = \MM(V,W) := \mathbf{M}^{\st}_0(V,W) / G_V. }\]
We will write $[C,i,j]_{G_V}$ (or simply $[C,i,j]$ when there is no risk of confusion) to denote the $G_V$-orbit of the point $(C,i,j) \in \mathbf{M}_0^{\st}$.
\end{defn}

Note that Definition \ref{defn:quivervariety} is slightly different than the definition of quiver varieties found in \cite[Section 3.ii]{nak2}, however the two definitions are merely duals of each other. Indeed, let $V^*$ and $W^*$ be the duals of $V$ and $W$, respectively. Then an element $(C,i,j) \in \mathbf{M}(V,W)$ naturally induces an element $(C^*,j^*,i^*) \in \mathbf{M}(V^*,W^*)$ by taking the transposes of the appropriate maps. One can then show that the mapping $(C,i,j) \mapsto (C^*,j^*,i^*)$ induces an isomorphism from $\MM$ to the quiver variety defined in \cite{nak2} (see specifically \cite[Corollary 3.12]{nak2}). 

By \cite[Lemma 3.10, Corollary 3.12]{nak2}, the varieties $\mathbf{M}_0^{\st}$ and $\MM$ are smooth. The tangent space of $\mathbf{M}_0^{\st}$ at a point $(C,i,j)$ is
\[\textstyle{ \T_{(C,i,j)}(\mathbf{M}_0^{\st}) = \T_{(C,i,j)}(\mu^{-1}(0)) = \ker \dd \mu,}\]
where $\dd \mu$ is the differential of the moment map $\mu$ at $(C,i,j)$, and is given by
\[\textstyle{ \dd \mu: \mathbf{M} \rightarrow L_{V,V}, \quad (D,a,b) \mapsto \varepsilon(C)D+\varepsilon(D)C+ib+aj. }\]
Let $\sigma = \sigma^{(C,i,j)}: G_V \to \mathbf{M}_0^{\st}$ be the map given by $g \mapsto g \cdot (C,i,j)$. The differential $\dd \sigma: L_{V,V} \to \ker \dd \mu$ is then given by
\[\textstyle{ \dd \sigma(X) = (XC-CX,Xi,-jX).}\]

\begin{lemma}\label{lem:tanspaceofqv}
Let $(C,i,j) \in \mathbf{M}_0^{\st}$. Then
\begin{enumerate}
	\item $\dd \sigma$ is injective and $\dd \mu$ is surjective, and
	
	\item the tangent space of $\MM$ at $[C,i,j]$ may be identified with the middle cohomology of the following complex:
\begin{equation}\textstyle{\label{eq:complex}
L_{V,V} \xhookrightarrow{\dd\sigma} E_V \oplus L_{W,V} \oplus L_{V,W} \xtwoheadrightarrow{\dd \mu} L_{V,V}.
}\end{equation}
\end{enumerate}
\end{lemma}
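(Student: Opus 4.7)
The plan is to handle parts (1) and (2) in order, since the tangent space description in (2) is a standard consequence of the quotient construction once the exactness properties in (1) are established.

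For the injectivity of $\dd\sigma$ in part (1), I would argue directly from stability. Suppose $X \in L_{V,V}$ with $\dd\sigma(X) = 0$, so $XC = CX$, $Xi = 0$, and $jX = 0$. Consider the $\QQ_0$-graded subspace $S = \ker X \subseteq V$. The condition $Xi=0$ gives $\im(i) \subseteq S$, and the commutation $XC = CX$ means that for any $v \in S_{t(\rho)}$, $X_{h(\rho)} C_\rho v = C_\rho X_{t(\rho)} v = 0$, so $S$ is $C$-invariant. Stability of $(C,i,j)$ then forces $S = V$, i.e.\ $X = 0$. For the surjectivity of $\dd\mu$, I would use the moment map identity
\[\omega(\dd\sigma(X), Y) = \langle \dd\mu(Y), X\rangle, \quad X \in L_{V,V},\ Y \in \mathbf{M},\]
where the pairing on the right is induced by the trace. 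Since $\omega$ is non-degenerate, this shows that (up to identifications) $\dd\mu$ and $\dd\sigma$ are transpose to each other, so injectivity of $\dd\sigma$ is equivalent to surjectivity of $\dd\mu$. One can also verify this identity directly by a short computation comparing $\omega(\dd\sigma(X),(D,a,b))$ to $\tr(X \cdot \dd\mu(D,a,b))$ using the explicit formulas for $\dd\sigma$ and $\dd\mu$ together with cyclicity of the trace; the sign function $\varepsilon$ makes both sides agree. The main obstacle is just bookkeeping signs coming from $\varepsilon$ on $E_V$.

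For part (2), the identification with the middle cohomology follows from the definition of the geometric quotient. Concretely, since $\mathbf{M}_0^{\st} = \mu^{-1}(0) \cap \mathbf{M}^{\st}$ and $\dd\mu$ is surjective (by part (1)), the map $\mu$ is a submersion at $(C,i,j)$, so
\[T_{(C,i,j)}\mathbf{M}_0^{\st} = \ker \dd\mu.\]
Since $G_V$ acts freely on $\mathbf{M}_0^{\st}$ and $\MM$ is the geometric quotient, the quotient map $\pi: \mathbf{M}_0^{\st} \twoheadrightarrow \MM$ is a principal $G_V$-bundle (in particular smooth), so its differential at $(C,i,j)$ is surjective with kernel equal to the tangent space to the $G_V$-orbit through $(C,i,j)$, which is $\im \dd\sigma$. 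Combining these,
\[T_{[C,i,j]}\MM \;=\; \ker \dd\mu \,/\, \im \dd\sigma,\]
which is exactly the middle cohomology of the complex \eqref{eq:complex}.

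Overall I do not expect any serious obstacle: step one reduces to the stability definition, the surjectivity of $\dd\mu$ is the standard moment-map duality, and step two is the general principle that the tangent space to a free quotient is the quotient of the ambient tangent space by the orbit directions. The only delicate point is verifying the moment map identity with the correct signs from $\varepsilon$, but this is a direct trace computation.
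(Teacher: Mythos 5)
Your proof is correct and follows the standard arguments; the paper itself merely cites \cite[Lemma 3.2]{savlic} for part (1) and the dual of \cite[Corollary 3.12]{nak2} for part (2), and your write-up is precisely the content those references would supply (stability gives injectivity of $\dd\sigma$, the moment-map pairing $\omega(\dd\sigma(X),Y)=\tr(X\,\dd\mu(Y))$ plus nondegeneracy of $\omega$ gives surjectivity of $\dd\mu$, and the free-quotient argument identifies $T_{[C,i,j]}\MM$ with $\ker\dd\mu/\im\dd\sigma$). One can check that the moment-map identity does close up with the $\varepsilon$ signs as you claim: using $\varepsilon(\bar\rho)=-\varepsilon(\rho)$ and cyclicity of the trace, both $\tr(\varepsilon(XC-CX)D)$ and $\tr(X\varepsilon(C)D)+\tr(X\varepsilon(D)C)$ reduce to $\sum_\rho \varepsilon(\bar\rho)\bigl(\tr(X_{t(\rho)}C_{\bar\rho}D_\rho)-\tr(C_{\bar\rho}X_{h(\rho)}D_\rho)\bigr)$.
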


\begin{proof}
The proof of (1) is completely analogous to \cite[Lemma 3.2]{savlic}. Part (2), is the dual of \cite[Corollary 3.12]{nak2}.
\end{proof}

It is worth noting that, up to isomorphism, the varieties $\mathbf{M}_0^{\st}(V,W)$ and $\MM(V,W)$ are parametrized by the graded dimensions of $V$ and $W$. Indeed, if $V'$ and $W'$ are vector spaces of the same graded dimension as $V$ and $W$, respectively, then there exist graded vector space isomorphisms $V \to V'$ and $W \to W'$, which in turn induce isomorphisms of varieties $\mathbf{M}_0^{\st}(V,W) \to \mathbf{M}_0^{\st}(V',W')$ and $\MM(V,W) \to \MM(V',W')$. Hence, we define one standard representative for each isomorphism class: for $\vv = (\vv_k), \ww = (\ww_k) \in (\N)^{\QQ_0}$, let
\[\textstyle{ \mathbf{M}_0^{\st}(\vv,\ww) := \mathbf{M}_0^{\st} \left( \bigoplus_{k \in \QQ_0} \C^{\vv_k}, \bigoplus_{k \in \QQ_0} \C^{\ww_k}\right),}\]
and
\[\textstyle{\MM(\vv,\ww) := \MM \left( \bigoplus_{k \in \QQ_0} \C^{\vv_k}, \bigoplus_{k \in \QQ_0} \C^{\ww_k}\right) = \mathbf{M}_0^{\st}(\vv,\ww) / G_\vv, }\]
where $G_\vv = \prod_{k \in \QQ_0} \GL_{\vv_k}(\C)$.

From now on, we restrict ourselves to the case where $Q$ is a quiver of type $\widehat{A}_{m-1}$, $m \in \N^+$ (with the case $m=1$ corresponding to the quiver consisting of one vertex and one loop). We label the vertices of $Q$ by $\{0,1, \dots, m-1\}$, and hence we can identify $Q_0=\QQ_0$ with $\Z_m$. In this case, we have that $\QQ$ is the quiver:

\begin{center}
\begin{tikzpicture}[>=stealth]

%vertices
\draw (0,0) node {$1$} (2,0) node {$2$} (4,0) node {$\cdots$} (6.5,0) node {$m-1$} (3.25,2) node {$0$};

%arrows
\draw[->] (0.3,.1) -- (1.7,.1); \draw[->] (1.7,-.1) -- (0.3,-.1);
\draw[->] (2.3,.1) -- (3.5,.1); \draw[->] (3.5,-.1) -- (2.3,-.1);
\draw[->] (4.5,.1) -- (5.7,.1); \draw[->] (5.7,-.1) -- (4.5,-.1);

\draw[->] (0,0.3) -- (3,2); \draw[->] (3,1.8) -- (0.3,0.3);
\draw[->] (3.5,2) -- (6.5,0.3); \draw[->] (6.2,0.3) -- (3.5,1.8);

\end{tikzpicture}
\end{center}
We denote by $\mathbf{M}(m; \vv,\ww)$ (resp.\ $\MM(m;\vv,\ww)$) the variety $\mathbf{M}(\vv,\ww)$ (resp.\ $\MM(\vv,\ww)$) corresponding to a quiver of type $\widehat{A}_{m-1}$ (recall that these varieties do not depend on the orientation of $Q$, thus $\mathbf{M}(m; \vv,\ww)$ and $\MM(m;\vv,\ww)$ are well-defined). Of particular interest to us will be the case $m=1$, for which we introduce the special notation:
\[\textstyle{ M(s,n) := \mathbf{M}(1;(n),(s)), \quad \M(s,n) := \MM(1;(n),(s)),}\]
where $s,n \in \N$. Note that in this case
\[\textstyle{ M(s,n) = \Hom(\C^n,\C^n) \oplus \Hom(\C^n,\C^n) \oplus \Hom(\C^s,\C^n) \oplus \Hom(\C^n,\C^s),}\]
and so we will denote elements of $M(s,n)$ by $(A,B,i,j)$ and the elements of $\M(s,n)$ by $[A,B,i,j]$, where, by convention, $A$ represents the linear map associated to the loop in $Q_1$ and $B$ the linear map associated to the loop in $\overline{Q}_1$. The moment map $\mu$ then simplifies to
\[\textstyle{ \mu(A,B,i,j) = [A,B] + ij.}\]
Thus, by \cite[Theorem 2.1]{nak1}, $\M(s,n)$ is isomorphic to the moduli space of framed torsion-free sheaves on $\mathbb{P}^2$ with rank $s$ and second Chern class $c_2 = n$.

Fix an $(s+1)$-dimensional torus
\[\textstyle{ T = (\C^*)^s \times \C^*, }\]
and denote elements of $T$ by $(e,t)$, where $e=(e_1,\dots,e_s) \in (\C^*)^s$ and $t \in \C^*$. We have a natural action of $(\C^*)^s$ on $\C^s$ given by
\[\textstyle{ e \cdot (w_1, \dots, w_s) = (e_1 w_1, \dots, e_s w_s),}\]
for all $e \in (\C^*)^s$ and $(w_1, \dots, w_s) \in \C^s$. For each $\cc \in \Z^s$, we have an action of $T$ on $M(s,n)$ given by
\[\textstyle{ (e,t) \cdot (A,B,i,j) = (tA, t^{-1}B, i e^{-1} t^{-\cc}, e t^{\cc} j),}\]
where
\[\textstyle{ t^{\cc} := (t^{\cc_1},\dots,t^{\cc_s}).}\]
The torus action preserves the space $M_0^{\st}(s,n)$ and commutes with the action of $\GL_n(\C)$, thus we have a well-defined action on $\M(s,n)$:
\begin{equation}\textstyle{\label{eq:torus}
(e,t) \cdot [A,B,i,j] = [tA, t^{-1}B, ie^{-1}t^{-\cc},et^{\cc}j].
}\end{equation}
Let $\M_{\cc}(s,n)$ denote the moduli space $\M(s,n)$ with the torus action given by Equation \eqref{eq:torus}.

Consider the case where $s=1$. It is known that if $(A,B,i,j) \in M_0^{\st}(1,n)$ then $j=0$ (see \cite[Proposition 2.7]{nak1}), thus we will simply write $(A,B,i)$ for an element $(A,B,i,0) \in M_0^{\st}(1,n)$. Let $\omega := e^{2 \pi \sqrt{-1} / m}$. The finite cyclic group of order $m$ acts on $\M_c(1,n)$ via the embedding
\begin{equation}\textstyle{ \label{eq:Zmaction}
\Z_m \hookrightarrow T, \quad k \mapsto (1,\omega^k).
}\end{equation}
That is
\[\textstyle{ k \cdot [A,B,i] = [\omega^k A, \omega^{-k}B, \omega^{-k c}i], }\]
for all $k \in \Z_m$ and $[A,B,i] \in \M_c(1,n)$. Recall that the fixed point set $\M_c(1,n)^{\Z_m}$ is a closed subvariety of $\M_c(1,n)$. The goal for the remainder of this section will be to describe $\M_c(1,n)^{\Z_m}$ in terms of quiver varieties of type $\widehat{A}_{m-1}$.

\begin{rmk}\label{rmk:F}
Let
\[\textstyle{ F_c(n) :=\left\{ (A,B,i) \in M_0^{\st}(1,n) \; | \; [A,B,i] \in \M_c(1,n)^{\Z_m} \right\},}\]
i.e.\ $F_c(n)$ is the preimage of $\M_c(1,n)^{\Z_m}$ under the projection $M_0^{\st}(1,n) \twoheadrightarrow \M_c(1,n)$. Thus, $F_c(n)$ is a closed subvariety of $M_0^{\st}(1,n)$. We have that $(A,B,i) \in F_c(n)$ if and only if there exists a $g \in \GL_n(\C)$ such that
\begin{equation}\textstyle{ \label{eq:weightsp}
\omega A = g^{-1}A g, \quad \omega^{-1} B = g^{-1}Bg, \quad \omega^{-c} i = g^{-1}i.
}\end{equation}
Note that since $\GL_n(\C)$ acts freely on $M^{\st}_0$, such a $g$, if it exists, is unique. We then have a weight space decomposition
\[\textstyle{ \C^n = \bigoplus_{k \in \Z_m} V_k(A,B,i), \quad V_k(A,B,i) := \{ v \in \C^n \; | \; gv = \omega^k v \}.}\]
By the stability of $(A,B,i)$, we have that $\C^n$ is spanned by elements of the form $A^p B^q i(1)$, for $p, q \in \N$. One can then verify that
\begin{equation}\textstyle{\label{eq:Vk}
V_k(A,B,i) = \vspan\{ A^p B^q i \; | \; (q-p) \equiv (k - c) \mod m \}.
}\end{equation}
We observe that $\im i \subseteq V_{\bar{c}}$ (where $\bar{c}$ is the equivalency class of $c$ in $\Z_m$) and the restrictions of $A$ and $B$ to $V_k$ yield maps
\[\textstyle{ A|_{V_k(A,B,i)}: V_k(A,B,i) \rightarrow V_{k-1}(A,B,i) \quad \text{and} \quad B|_{V_k(A,B,i)}: V_k(A,B,i) \rightarrow V_{k+1}(A,B,i). }\]
Conversely, suppose we have a weight space decomposition $\C^n=\bigoplus_{k \in \Z_m} V_k$ such that $A|_{V_k}:V_k \rightarrow V_{k-1}$ and $B|_{V_k}:V_k \rightarrow V_{k+1}$ and $\im i \subseteq V_{\bar{c}}$. It is easy to see that
\[\textstyle{ g = \prod_{k \in \Z_m} \omega^k \id_{V_k}.}\]
satisfies conditions \eqref{eq:weightsp} and that $V_k(A,B,i) = V_k$ for each $k \in \Z_m$.
\end{rmk}

\begin{lemma}\label{lem:fixedpointsmooth}
The variety $\M_c(1,n)^{\Z_m}$ is smooth. Moreover, the tangent space of $\M_c(1,n)^{\Z_m}$ at $[A,B,i]$ may be identified with the middle cohomology of the following complex:
\begin{equation}\textstyle{\label{eq:complex2}
L \xhookrightarrow{\dd\sigma} E^- \oplus E^+ \oplus \Hom(\C,V_{\bar{c}}) \oplus \Hom(V_{\bar{c}},\C) \xtwoheadrightarrow{\dd \mu} L,
}\end{equation}
where $L = \bigoplus_{k \in \Z_m} \Hom(V_k,V_k)$ and $E^{\pm} = \bigoplus_{k \in \Z_m} \Hom(V_k, V_{k \pm 1})$, and $\sigma:\GL_n(\C) \to M_0^{\st}$ is the map given by $g \mapsto g \cdot (A,B,i)$.
\end{lemma}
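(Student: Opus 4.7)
By Lemma \ref{lem:tanspaceofqv}, $\M_c(1,n)$ is smooth, so the plan is to exploit the general principle that the fixed locus of a reductive group acting on a smooth variety is itself smooth, with tangent space equal to the invariant subspace of the ambient tangent space. Since $\Z_m$ is a finite (hence linearly reductive) group, this principle — a standard consequence of the Luna slice theorem, or equivalently of Fogarty's theorem on fixed-point schemes — immediately yields smoothness of $\M_c(1,n)^{\Z_m}$ and identifies $T_{[A,B,i]}\M_c(1,n)^{\Z_m}$ with $\left( T_{[A,B,i]}\M_c(1,n)\right)^{\Z_m}$. It remains to recognize this invariant subspace as the middle cohomology of \eqref{eq:complex2}.

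To do so, fix $[A,B,i] \in \M_c(1,n)^{\Z_m}$ and let $g \in \GL_n(\C)$ be the unique element of Remark \ref{rmk:F}. Combining the torus action \eqref{eq:Zmaction} with a gauge transformation by $g^k$ produces a $\Z_m$-action on $M_0^{\st}(1,n)$ that fixes the point $(A,B,i,0)$ (indeed, $\omega g A g^{-1} = \omega \cdot \omega^{-1}A = A$, and similarly for the other components). Differentiating, the generator of $\Z_m$ sends a tangent vector $(\delta A, \delta B, \delta i, \delta j) \in \mathbf{M}$ to $(\omega g \delta A g^{-1},\, \omega^{-1} g \delta B g^{-1},\, \omega^{-c} g \delta i,\, \omega^{c} \delta j g^{-1})$. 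Decomposing each Hom-space via the weight decomposition $\C^n = \bigoplus_{k \in \Z_m} V_k$ of Remark \ref{rmk:F}, an elementary calculation in each weight component shows that the $\Z_m$-invariants of the four summands of $\mathbf{M}$ are, respectively, $E^-$, $E^+$, $\Hom(\C, V_{\bar c})$, and $\Hom(V_{\bar c}, \C)$, while those of $L_{V,V}$ are $L = \bigoplus_k \Hom(V_k, V_k)$. The maps $\dd\sigma$ and $\dd\mu$ are $\Z_m$-equivariant (they arise from the natural lifted action), so they restrict to the corresponding maps in \eqref{eq:complex2}.

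To finish, one observes that $\Z_m$-invariants form an exact functor on complex vector spaces (since $\Z_m$ is finite and we work in characteristic $0$); hence the middle cohomology of the $\Z_m$-invariant subcomplex of \eqref{eq:complex} coincides with the $\Z_m$-invariants of the middle cohomology of \eqref{eq:complex}, which is exactly $\left(T_{[A,B,i]}\M_c(1,n)\right)^{\Z_m} = T_{[A,B,i]}\M_c(1,n)^{\Z_m}$. The main technical hurdle I anticipate is pinning down the precise formula for the induced $\Z_m$-action on tangent vectors, since the sign conventions for $g$ in Remark \ref{rmk:F} interact delicately with the gauge correction and the direction of the torus action; once this bookkeeping is correct, the identification of the invariants is routine.
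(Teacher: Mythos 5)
Your proposal is correct and takes essentially the same approach as the paper: the paper invokes semisimplicity of $\Z_m$-representations (citing Iversen) for smoothness, and then says the tangent space identification follows by ``computing the $\Z_m$-fixed points of Complex \eqref{eq:complex}'' — which is exactly the gauge-corrected weight-space computation you carry out in detail. You have merely filled in the bookkeeping the paper leaves to the reader, and your sign/weight analysis of the four summands of $\mathbf{M}$ and of $L_{V,V}$ is correct.
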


\begin{proof}
For the first part, note that the category of finite-dimensional linear representations of $\Z_m$ is semisimple. Therefore, by \cite[Proposition 1.3]{iversen}, $\M_c(1,n)^{\Z_m}$ is smooth. The second part can be proved by computing the $\Z_m$-fixed points of Complex \eqref{eq:complex}.
\end{proof}

Let $(C,i,j) \in \mathbf{M}(m; \vv,\1_{\bar{c}})$. We identify $\bigoplus_{k \in \Z_m} \C^{\vv_k}$ with $\C^{|\vv|}$ by identifying $\1^k_\ell$ with $\1_{\vv_{0} + \dots + \vv_{k-1} + \ell}$, where $\{\1^k_\ell\}_{\ell=1}^{\vv_k}$ and $\{\1_\ell\}_{\ell=1}^{|\vv|}$ denote the standard bases of $\C^{\vv_k}$ and $\C^{|\vv|}$, respectively. Let $A_C, B_C \in \Hom\left(\C^{|\vv|},\C^{|\vv|}\right)$ be the maps determined by
\begin{equation}\textstyle{\label{eq:A_C,B_C}
(A_C)|_{\C^{\vv_k}}= \varepsilon(\rho) C_\rho \qquad \text{and} \qquad  (B_C)|_{\C^{\vv_k}}=C_{\tau},
}\end{equation}
for each $k \in \Z_m$, $\rho:k \to k-1$ in $\QQ_1$ and $\tau:k\rightarrow k+1$ in $\QQ_1$ (note that in the $m=2$ case, we simply make a choice as to which arrow of $Q_1$ corresponds to $k \to k+1$ and which one corresponds to $k \to k-1$). We thus have a mapping
\begin{equation}\textstyle{ \label{eq:map}
\mathbf{M}(m; \vv, \1_{\bar{c}}) \to M(1,|\vv|), \quad (C,i,j) \mapsto (A_C, B_C, i, j).
}\end{equation}
Clearly, the map \eqref{eq:map} preserves stability and $\mu(C,i,j) = 0$ implies $\mu(A_C,B_C,i,j)=0$. In particular, if $(C,i,j) \in \mathbf{M}_0^{\st}(m;\vv,\1_{\bar{c}})$, then $j=0$. Moreover, by Remark \ref{rmk:F}, for each $(C,i,0) \in \mathbf{M}_0^{\st}(m;\vv,\1_{\bar{c}})$, we have that $(A_C,B_C,i) \in F_c(n)$ (with $V_k(A_C,B_C,i) = \C^{\vv_k}$). Thus, we have morphism of varieties:
\[\textstyle{ \varphi_{\vv}: \mathbf{M}_0^{\st}(m; \vv, \1_{\bar{c}}) \rightarrow F_c(|\vv|), \quad (C,i,0) \mapsto (A_C, B_C, i). }\] 

\begin{lemma}\label{lem:dir2}
The map $\varphi_{\vv}$ induces a morphism of varieties
\[\textstyle{ \overline{\varphi}_{\vv}: \MM(m;\vv,\1_{\bar{c}}) \rightarrow \M_c(1,|\vv|)^{\Z_m}, \quad [C,i,0] \mapsto [A_C,B_C,i]. }\]
In particular, one has a morphism of varieties $\overline{\varphi}: \coprod_{|\vv|=n} \MM(m; \vv, \1_{\bar{c}}) \to \M_c(1,n)^{\Z_m}$, given by $\overline{\varphi} = \coprod_{|\vv|=n} \overline{\varphi}_{\vv}$.
\end{lemma}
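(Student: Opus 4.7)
The plan is to show that $\varphi_{\vv}$ is equivariant with respect to a natural embedding $\iota\colon G_{\vv} \hookrightarrow \GL_{|\vv|}(\C)$, and then to invoke the universal property of the geometric quotient to descend $\varphi_{\vv}$ to the quotient. The target lands in the $\Z_m$-fixed locus for free, since Remark \ref{rmk:F} (which was already used to define $\varphi_{\vv}$) shows that the image of $\varphi_{\vv}$ lies in $F_c(|\vv|)$, the preimage of $\M_c(1,|\vv|)^{\Z_m}$ under the projection.

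First, under the identification $\bigoplus_{k \in \Z_m} \C^{\vv_k} \cong \C^{|\vv|}$ used to construct $(A_C, B_C)$, the group $G_{\vv} = \prod_{k \in \Z_m} \GL_{\vv_k}(\C)$ embeds into $\GL_{|\vv|}(\C)$ as a subgroup of block-diagonal matrices; call this embedding $\iota$. For $g=(g_k)_{k \in \Z_m} \in G_{\vv}$ and $(C,i,0) \in \mathbf{M}_0^{\st}(m;\vv,\1_{\bar{c}})$, a direct inspection of \eqref{eq:A_C,B_C} together with the formula $g \cdot (C,i,j) = (gCg^{-1}, gi, jg^{-1})$ shows that
\[\textstyle{A_{g \cdot C} \;=\; \iota(g)\, A_C\, \iota(g)^{-1}, \quad B_{g \cdot C} \;=\; \iota(g)\, B_C\, \iota(g)^{-1}, \quad g \cdot i \;=\; \iota(g)\, i.}\]
Hence $\varphi_{\vv}(g \cdot (C,i,0)) = \iota(g) \cdot \varphi_{\vv}(C,i,0)$ inside $F_c(|\vv|)$, so $\varphi_{\vv}$ is $G_{\vv}$-equivariant relative to $\iota$.

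Second, the composition of $\varphi_{\vv}$ with the projection $F_c(|\vv|) \twoheadrightarrow \M_c(1,|\vv|)^{\Z_m}$ is $G_{\vv}$-invariant, because $\iota(G_{\vv}) \subseteq \GL_{|\vv|}(\C)$ and we quotient by the full group on the target. By the universal property of the geometric quotient $\mathbf{M}_0^{\st}(m;\vv,\1_{\bar{c}}) \twoheadrightarrow \MM(m;\vv,\1_{\bar{c}})$, this composition factors uniquely through a morphism of varieties $\overline{\varphi}_{\vv}\colon \MM(m;\vv,\1_{\bar{c}}) \to \M_c(1,|\vv|)^{\Z_m}$, and by construction $\overline{\varphi}_{\vv}[C,i,0] = [A_C, B_C, i]$. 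Taking the disjoint union over all $\vv$ with $|\vv|=n$ yields the final assertion.

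There is no substantive obstacle; the entire argument is bookkeeping with the block-diagonal identification, and the main task is simply to verify carefully that the formulas for $A_C$, $B_C$, and $i$ transform correctly under $g \in G_{\vv}$ via $\iota$. Everything else is immediate from the definition of geometric quotient.
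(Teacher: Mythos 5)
Your proposal is correct and follows essentially the same route as the paper: introduce the block-diagonal embedding $G_{\vv}\hookrightarrow\GL_{|\vv|}(\C)$ (the paper calls it $\psi$, you call it $\iota$), observe that $\varphi_{\vv}$ is equivariant with respect to it, and descend to the geometric quotient. The paper expresses the equivariance via a commutative diagram rather than the explicit conjugation identities you spell out, but the content is identical.
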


\begin{proof}
Let $\psi: G_{\vv} \to \GL_{|\vv|}(\C)$ be the group homomorphism induced by our identification of $\bigoplus_{k \in \Z_m} \C^{\vv_k}$ with $\C^{|\vv|}$, i.e.\ partitioning $\GL_{|\vv|}(\C)$ into $m^2$ blocks of size $\vv_i \times \vv_j$, $i,j=0,1,\dots,m-1$, we embed $\GL_{\vv_k}(\C)$ into the $(k,k)$-th diagonal block of $\GL_{|\vv|}(\C)$, for each $k = 0,1,\dots,m-1$. One then has the following commutative diagram:
\begin{center}
\begin{tikzpicture}[>=stealth]

\draw (0,0) node {$G_{\vv} \times \mathbf{M}_0^{\st}(m; \vv, \1_{\bar{c}})$} (4,0) node {$\mathbf{M}_0^{\st}(m; \vv, \1_{\bar{c}})$} (0,-1.5) node {$\GL_{|\vv|}(\C) \times F_c(|\vv|)$} (4,-1.5) node {$F_c(|\vv|),$};

\draw[->] (0,-0.3) -- (0,-1.2); \draw[->] (1.8,0) -- (2.7,0); \draw[->] (4,-0.3) -- (4,-1.2); \draw[->] (1.6,-1.5) -- (3.3,-1.5);

\draw (-0.7,-0.75) node {$\psi \times \varphi_{\vv}$} (4.4, -0.75) node {$\varphi_{\vv}$};

\end{tikzpicture}
\end{center}
where the horizontal arrows represent the group action. One therefore has a well-defined map $\overline{\varphi}_{\vv}: \MM(m;\vv,\1_{\bar{c}}) \to \M_{\cc}(1,|\vv|)^{\Z_m}$, on the quotients.
\end{proof}

\begin{thm}\label{thm:variso}
The map $\overline{\varphi}$ from Lemma \ref{lem:dir2} is an isomorphism. Therefore,
\[\textstyle{ \M_c(1,n)^{\Z_m} \cong \coprod_{|\vv|=n} \MM(m; \vv, \1_{\bar{c}}),}\]
as varieties.
\end{thm}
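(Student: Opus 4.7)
The plan is to prove that $\overline{\varphi}$ is an isomorphism by establishing it is a bijective morphism between smooth varieties whose differential is an isomorphism at every point. Since the source is smooth by general theory of Nakajima quiver varieties (Lemma \ref{lem:tanspaceofqv}) and the target is smooth by Lemma \ref{lem:fixedpointsmooth}, this suffices.

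The first step is to exhibit a set-theoretic inverse. Given a representative $(A,B,i) \in F_c(n)$ of a point in $\M_c(1,n)^{\Z_m}$, Remark \ref{rmk:F} furnishes a canonical $\Z_m$-graded decomposition $\C^n = \bigoplus_{k \in \Z_m} V_k(A,B,i)$ with $A|_{V_k}: V_k \to V_{k-1}$, $B|_{V_k}: V_k \to V_{k+1}$, and $\im i \subseteq V_{\bar{c}}$. Setting $\vv_k := \dim V_k(A,B,i)$ and choosing any graded linear isomorphism $\phi: \bigoplus_k \C^{\vv_k} \xrightarrow{\cong} \C^n$ with $\phi(\C^{\vv_k}) = V_k(A,B,i)$, we pull back $(A,B,i)$ to a tuple $(C,i',0) \in \mathbf{M}(m; \vv, \1_{\bar{c}})$: for each arrow $\rho \in \QQ_1$, define $C_\rho$ from the restriction of $A$ (if $\rho$ decreases the index, using the sign $\varepsilon(\rho)^{-1}$) or of $B$ (if $\rho$ increases the index), as dictated by Equation \eqref{eq:A_C,B_C}. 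Stability of $(A,B,i)$ transfers to stability of $(C,i',0)$, since any $C$-invariant, $\QQ_0$-graded subspace of $V$ containing $\im i'$ pushes forward via $\phi$ to an $(A,B)$-invariant subspace of $\C^n$ containing $\im i$, which must equal $\C^n$. Two different choices of $\phi$ differ by a unique element of $G_{\vv}$, so the orbit $[C,i',0]$ is well-defined. By construction and by the uniqueness clause in Remark \ref{rmk:F}, the resulting assignment is a two-sided inverse to $\overline{\varphi}$ on points.

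The second step is to verify $d\overline{\varphi}$ is an isomorphism on tangent spaces. Under the identifications $V = \bigoplus_k \C^{\vv_k}$ and $W = \C$ concentrated in degree $\bar{c}$, we have $L_{V,V} = L$, $E_V = E^- \oplus E^+$, $L_{W,V} = \Hom(\C, V_{\bar{c}})$, and $L_{V,W} = \Hom(V_{\bar{c}}, \C)$, in the notation of Lemma \ref{lem:fixedpointsmooth}. A direct comparison shows that the differentials $\dd\sigma$ and $\dd\mu$ of Lemma \ref{lem:tanspaceofqv}, applied to the $\widehat{A}_{m-1}$ quiver data $(C,i,0)$, match term-by-term with those of Complex \eqref{eq:complex2} at $(A_C,B_C,i)$. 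Consequently the two middle cohomologies are canonically identified, and $d\overline{\varphi}$ is an isomorphism.

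The main technical obstacle will be the sign bookkeeping for $\varepsilon$. One must confirm that the moment map equation $\varepsilon(C)C + ij = 0$ on $\mathbf{M}_0^{\st}(m;\vv,\1_{\bar{c}})$ corresponds precisely to $[A_C,B_C] + ij = 0$ under Equation \eqref{eq:A_C,B_C}, and that likewise $\dd\mu(D,a,b) = \varepsilon(C)D + \varepsilon(D)C + ib + aj$ translates into the commutator-plus-framing expression appearing in the differential of the Jordan-quiver moment map on the $\Z_m$-fixed locus. Once the signs are aligned (which is exactly why Equation \eqref{eq:A_C,B_C} inserts the factor $\varepsilon(\rho)$), every other check is routine: $\overline{\varphi}$ is a bijective morphism between smooth varieties with an isomorphism on tangent spaces, hence an isomorphism.
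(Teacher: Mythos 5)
Your proposal is correct and follows essentially the same approach as the paper: bijectivity is established through the weight-space decomposition of Remark \ref{rmk:F}, and the isomorphism on tangent spaces comes from comparing the complexes of Lemma \ref{lem:tanspaceofqv} and Lemma \ref{lem:fixedpointsmooth}. The minor organizational differences --- constructing a two-sided inverse rather than checking injectivity and surjectivity separately, and observing that the complexes are termwise identified rather than combining injectivity of $d\overline{\varphi}$ with a dimension count --- do not change the substance of the argument.
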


\begin{proof}
We first begin by showing that $\overline{\varphi}$ is bijective. Suppose that $(C,i,0) \in \mathbf{M}_0^{\st}(m; \vv, \1_{\bar{c}})$ and $(D,a,0) \in \mathbf{M}_0^{\st}(m; \uu, \1_{\bar{c}})$, with $|\vv|=|\uu|=n$, are such that
\[\textstyle{ \overline{\varphi}[C,i,0] = \overline{\varphi}[D,a,0],}\]
i.e.\ $[A_C, B_C, i] = [A_D, B_D, a]$. Then there exists $g \in \GL_n(\C)$ such that $(A_C, B_C, i) = g \cdot (A_D, B_D, a)$. For each $k \in \Z_m$,
\[\textstyle{ \C^{\vv_k} = V_k(A_C,B_C,i) = g (V_k(A_D,B_D,a)) = g(\C^{\uu_k}).}\]
In particular, $\vv = \uu$. Moreover, since $g|_{\C^{\vv_k}}: \C^{\vv_k} \to \C^{\vv_k}$, we may view $g$ as an element of $G_{\vv}$. One then easily verifies that $(C,i,0) = g \cdot (D,a,0)$. Thus, $[C,i,0] = [D,a,0]$, and so $\overline{\varphi}$ is injective. 

Now, let $(A,B,i) \in F_c(n)$. Set $\vv_k = \dim V_k(A,B,i)$ and choose a graded linear isomorphism
\[\textstyle{ f: \bigoplus_{k \in \Z_m} V_k(A,B,i) \to \bigoplus_{k \in \Z_m} \C^{\vv_k}.}\]
Then $[A,B,i] = [ f A f^{-1}, f B f^{-1}, f i]$. Define $(C,a,0) \in \mathbf{M} (m;\vv,\1_{\bar{c}})$ by
\[\textstyle{ C_\rho = \begin{cases} \varepsilon(\rho) (f A f^{-1})|_{\C^{\vv_k}}, & \text{if } \rho: k \to k-1, \\ (f B f^{-1})|_{\C^{\vv_k}}, & \text{if } \rho: k \to k+1,  \end{cases} \quad \text{and} \quad a = f i, }\]
for all $\rho \in \QQ_1$. It is easy to see that $(C,i,0)$ is stable and $\mu(C,a,0)=0$. Thus, $(C,a,0) \in \mathbf{M}_0^{\st}$, and, by construction, $\overline{\varphi}[C,i,0] = [A,B,i]$. Hence, $\overline{\varphi}$ is surjective.

Next, we show that $\overline{\varphi}$ induces an isomorphism on the tangent spaces. Recall that we identify the tangent spaces of $\MM$ and $\M^{\Z_m}$ with the middle cohomologies of Complex \eqref{eq:complex} and Complex \eqref{eq:complex2}, respectively. By construction of $\overline{\varphi}$, the induced map $\dd \overline{\varphi}$ on the tangent spaces is
\[\textstyle{ \dd \overline{\varphi}: \T_{[C,i,0]}(\MM) \to \T_{[A_C,B_C,i]}(\M^{\Z_m}), \quad
(D,a,b) + \im \dd \sigma \mapsto (D^-, D^+, a, b) + \im \dd \sigma, }\]
where $(D^-)_k = \varepsilon(\rho) D_{\rho:k \to k-1}$ and $(D^+)_k = D_{\rho:k \to k+1}$ for all $k \in \Z_m$. Clearly, $\dd \overline{\varphi}$ is injective. Moreover, by Lemma \ref{lem:tanspaceofqv} and Lemma \ref{lem:fixedpointsmooth}, $\dim \T_{[C,i,0]}(\MM) = \dim \T_{[A_C,B_C,i]}(\M^{\Z_m})$, and hence $\dd \overline{\varphi}$ is an isomorphism. Thus, $\overline{\varphi}$ is an isomorphism of varieties.
\end{proof}

%%%%%%%%%%%%%%%%%%%%%%%%%%%%%%%%%%%%%%%%%%%%%%%%%%%%%%%%%%%%%%%%%%%%%%%%%
%
% SECTION 3
%
%%%%%%%%%%%%%%%%%%%%%%%%%%%%%%%%%%%%%%%%%%%%%%%%%%%%%%%%%%%%%%%%%%%%%%%%%

\section{Vector Bundles and Geometric Operators}\label{section:3}

In this section, we describe how to obtain so-called ``geometric operators" on the localized equivariant cohomology of smooth algebraic varieties; this method was first introduced in \cite{ceo}. The theory outlined in this section will serve as our main tool for constructing our geometric versions of the Clifford, Heisenberg and Chevalley operators in the next section. We do not review equivariant cohomology theory here, but instead refer the reader to such expository papers as \cite{tym} or \cite{brion}. Our constructions rely heavily on the Localization Theorem (see Theorem \ref{thm:localization}). The reader may wish to consult \cite[Appendix to Chapter 6]{audin} for more information on localization.

Let $G = (\C^*)^d$ be a $d$-dimensional torus and, for each $j=1,\dots,d$, denote the $1$-dimensional $G$-module
\[\textstyle{ (g_1,\dots,g_d) \mapsto g_j,}\]
by $g_j$. Let $\pt$ denote the space consisting of a single point equipped with the trivial action of the torus $G$. Let $t_j$ denote the first Chern class of the vector bundle
\[\textstyle{ g_j \rightarrow \pt, }\]
for each $j=1,\dots,d$. Note that the $t_j$ are elements of degree $2$. Recall that the \emph{equivariant cohomology} of $\pt$ is
\[\textstyle{ H_G^*(\pt) = \C[t_1, \dots, t_d].}\]
Let $X$ be a topological space equipped with a $G$-action. Then $H_G^*(X)$ is a $H_G^*(\pt)$-module. We consider the \emph{localized equivariant cohomology} of $X$:
\[\textstyle{ \HH_G^*(X) := H_G^*(X) \otimes_{\C[t_1,\dots,t_d]} \C(t_1,\dots,t_d).}\]
Unless otherwise noted, ``cohomology" will always mean ``localized equivariant cohomology". Let
\[\textstyle{ i: X^G \hookrightarrow X, }\]
be the inclusion of the $G$-fixed points and let
\[\textstyle{ p: X^G \twoheadrightarrow \pt.}\]
The advantage of localized equivariant cohomology over nonlocalized equivariant cohomology is that its study can be reduced to the cohomology of the $G$-fixed points. We will only be interested in the case where $X$ is a smooth variety with finitely many $G$-fixed points. In this situation, we have the following theorem.

\begin{thm}[Localization Theorem]\label{thm:localization}
The following map is an isomorphism of algebras:
\begin{equation}\textstyle{ \label{eq:localization}
\HH^*_G(X) \to \HH^*_G(X^G)=\bigoplus_{x \in X^G} \HH^*_G(\pt), \quad \alpha \mapsto \left(\frac{i_x^*(\alpha)}{e_G(\T_x)}\right)_{x \in X^G},
}\end{equation}
where $i_x:\{x\}\hookrightarrow X$, $\T_x$ is the tangent space of $x$ in $X$, and $e_G(\T_x)$ is the equivariant Euler class of $\T_x$. The inverse of \eqref{eq:localization} is given by the $Gysin$ map $i_*: \HH_G^*(X^G) \rightarrow \HH^*_G(X)$.
\end{thm}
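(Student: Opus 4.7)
The plan is to combine the classical Atiyah--Bott--Berline--Vergne localization principle with the self-intersection formula in equivariant cohomology. Since $X^G$ consists of finitely many isolated points, we have the canonical decomposition $\HH^*_G(X^G) \cong \bigoplus_{x \in X^G} \HH^*_G(\pt)$, so the statement splits into two independent assertions: (a) the restriction map $i^*\colon \HH^*_G(X) \to \HH^*_G(X^G)$ is an isomorphism of $\HH^*_G(\pt)$-algebras after rescaling by the Euler classes $e_G(\T_x)$; (b) the inverse is given by the Gysin pushforward $i_*$.

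My first step would be to establish (a), which is the main content. The idea is to analyze the long exact sequence of the $G$-invariant pair $(X, X \setminus X^G)$:
\[
\cdots \to H^{*-c}_G(X^G) \xrightarrow{i_*} H^*_G(X) \to H^*_G(X \setminus X^G) \to \cdots
\]
and to show that after localization, $\HH^*_G(X \setminus X^G) = 0$. For any point $y \in X \setminus X^G$ the stabilizer $G_y$ is a proper subtorus of $G$, so there exists a character $\chi$ of $G$ that is nontrivial but restricts trivially to $G_y$; a Mayer--Vietoris / induction argument on a $G$-invariant stratification (or, more conceptually, the Borel construction) then shows that $H^*_G(X \setminus X^G)$ is annihilated by such characters. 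Since we have passed to $\C(t_1,\dots,t_d)$, every nonzero character becomes a unit and the cohomology vanishes. This forces $i_*$ and dually $i^*$ to be isomorphisms on localized cohomology.

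My second step is to compute the composition $i^* \circ i_*$ to identify the inverse precisely. Here I would invoke the self-intersection formula: for the closed embedding $i_x\colon \{x\} \hookrightarrow X$ of an isolated fixed point, the normal bundle is the entire tangent space $\T_x$, so
\[
i_x^* \circ i_{x,*}(1) = e_G(\T_x) \in \HH^*_G(\pt).
\]
Extending linearly over the finitely many components $x \in X^G$ (and noting that $i_x^* \circ i_{y,*} = 0$ for $x \ne y$, since the images are disjoint), one obtains that on $\HH^*_G(X^G) = \bigoplus_x \HH^*_G(\pt)$ the composition $i^* \circ i_*$ is multiplication by $(e_G(\T_x))_{x \in X^G}$, which is a unit in the localized ring because each $e_G(\T_x)$ is a nonzero product of characters (the weights of $G$ on $\T_x$, none of which vanish as $x$ is isolated). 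Dividing by this Euler class yields the stated formula and shows it is inverse to $i_*$.

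The main obstacle is step (a): one needs vanishing of localized equivariant cohomology away from the fixed locus, which is really the heart of the localization theorem. In practice, since this is a standard and well-documented result, I would simply cite \cite[Appendix to Chapter 6]{audin} (as the paper already does) for step (a), and then give the short explicit computation of the inverse via the self-intersection formula as above.
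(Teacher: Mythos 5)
Your proposal is mathematically correct and is the standard Atiyah--Bott--style argument: localize the long exact sequence of the pair $(X, X\setminus X^G)$ to reduce to $X^G$, then use the self-intersection formula $i_x^* i_{x,*}(1)=e_G(\T_x)$ together with the fact that these Euler classes are units in the localized ring to identify the inverse as the Gysin map.

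However, the paper's own proof is simply a one-line citation: it notes that the statement is a specialization of \cite[Proposition 9.1.2]{cdks} to the case of finitely many fixed points. So the two routes differ in emphasis rather than in mathematics. The paper outsources the entire argument, treating localization as a black box. You reconstruct the skeleton of the proof, which is more informative and makes explicit \emph{why} the inverse is $i_*$ (namely the self-intersection computation and the unit property of the Euler classes), while still acknowledging that the key input---vanishing of $\HH^*_G(X\setminus X^G)$ after inverting characters---is the hard part that would itself be cited. One very minor caveat worth noting: as written, the map $\alpha \mapsto \bigl(i_x^*(\alpha)/e_G(\T_x)\bigr)_x$ is not literally multiplicative because of the Euler-class denominator; the \emph{algebra} isomorphism is really the undivided restriction $i^*$, and the division by $e_G(\T_x)$ is the module-level correction needed so that the inverse is exactly $i_*$. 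Your proposal implicitly reproduces this slight conflation present in the paper's own statement, but it does not affect the validity of either argument.
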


\begin{proof}
This is a restatement of \cite[Proposition 9.1.2]{cdks} in the case that $X$ has finitely many fixed points.
\end{proof}

Suppose now that $X$ has real dimension $4n$, for some $n \in \N$. We define a bilinear $\langle \mbox{-},\mbox{-} \rangle_X$ on the middle degree localized equivariant cohomology $\HH^{2n}_G(X)$ by
\begin{equation}\textstyle{\label{eq:bilinear}
\langle a,b \rangle_X := (-1)^n p_*(i_*)^{-1}(a \cup b),
}\end{equation}
where $i_*$ is invertible by the Localization Theorem. We can extend this idea to a product of varieties. Indeed, suppose $X_1,X_2$ are varieties of real dimension $4n_1$ and $4n_2$, respectively. We define a bilinear form $\langle \mbox{-},\mbox{-} \rangle_{X_1 \times X_2}$ on $\HH_G^{2(n_1+n_2)}(X_1 \times X_2)$ by
\begin{equation}\textstyle{\label{eq:bilinearProd}
\langle a,b \rangle_{X_1 \times X_2} := (-1)^{n_2} p_* ((i_1 \times i_2)_*)^{-1} (a \cup b),
}\end{equation}
where $i_1$ and $i_2$ are the inclusions of the $G$-fixed points into the first and second factors, respectively. An element $\alpha \in \HH^{2(n_1+n_2)}_G(X_1 \times X_2)$ then defines an operator
\begin{equation}\textstyle{\label{eq:operator}
\alpha: \HH_G^{2n_1}(X_1) \rightarrow \HH_G^{2n_2}(X_2),
}\end{equation}
by using the bilinear form to define structure constants:
\[\textstyle{ \langle \alpha(x),y \rangle_{X_2} := \langle \alpha, x \otimes y \rangle_{X_1 \times X_2}. }\]
Thus, an element $\alpha \in \HH^{2(n_1+n_2)}_G(X_1 \times X_2)$ will be called a \emph{geometric operator}.

Recall that the torus $T=(\C^*)^s \times \C^*$ acts on $\M_{\cc}(s,n)$ via
\begin{equation}\textstyle{\label{eq:Taction}
(e,t) \cdot [A,B,i,j] = [tA,t^{-1}B,ie^{-1}t^{-\cc},et^{\cc}j],
}\end{equation}
for all $(e,t) \in T$ and $[A,B,i,j] \in \M_{\cc}(s,n)$. Let $T_\bullet = \C^*$ and let $T_\bullet$ act on $\M_{\cc}(s,n)$ via the embedding
\[\textstyle{ T_\bullet \to T, \quad z \mapsto (1,z,z^2, \dots, z^{s-1},1). }\]
Similar to Remark \ref{rmk:F}, one sees that $[A,B,i,j] \in \M_{\cc}(s,n)^{T_\bullet}$ if and only if there exists a group homomorphism $g: T_\bullet \to \GL_n(\C)$ such that
\begin{equation}\textstyle{\label{eq:Tbullet}
\begin{matrix}
g(z)^{-1}Ag(z) = A, \quad g(z)^{-1}Bg(z) = B, \\[6pt]
g(z)^{-1} i = i (1,z^{-1}, \dots, z^{1-s}), \quad j g(z) = (1,z, \dots, z^{s-1}) j.
\end{matrix}
}\end{equation}
Define
\begin{equation}\textstyle{\label{eq:V^k}
V^k = V^k(A,B,i,j) := \{ v \in \C^n \; | \; g(z) v = z^{k-1} v \}.
}\end{equation}
By the stability of $(A,B,i,j)$, we have that $\C^n = \bigoplus_{k=1}^s V^k$. Moreover,
\[\textstyle{ A(V^k), B(V^k), i(\C \1_k) \subseteq V^k, \quad \text{and} \quad j(V_k) \subseteq \C \1_k,}\]
for all $k=1,\dots,s$. Conversely, if there exists a decomposition $\C^n = \bigoplus_{k=1}^s U^k$ such that $A(U^k)$, $B(U^k)$, $i(\C\1_k) \subseteq U^k$ and $j(U^k) \subseteq \C \1_k$, then we may define a group homomorphism $g: T_\bullet \to \GL_n(\C)$ by defining $g(z)|_{U^k} = z^{k-1} \id_{U^k}$. One easily checks that $g$ satisfies conditions \eqref{eq:Tbullet} and $U^k = V^k(A,B,i,j)$. Thus, $[A,B,i,j] \in \M_{\cc}(s,n)^{T_\bullet}$.

Now suppose $\nn = (\nn_1, \dots, \nn_s) \in \N^s$ such that $|\nn|=n$. Define
\[\textstyle{ \M_{\cc}(\nn) := \M_{\cc_1}(1,\nn_1) \times \cdots \times \M_{\cc_s}(1,\nn_s).}\]
Identify $\bigoplus_k \C^{\nn_k}$ with $\C^n$ by identifying $\1^k_\ell$ with $\1_{\nn_1 + \dots + \nn_{k-1} + \ell}$, where $\{\1^k_\ell\}_{\ell=1}^{\nn_k}$ is the standard basis of $\C^{\nn_k}$. An element $\left( [A_1,B_1,i_1], \dots, [A_s,B_s,i_s] \right) \in \M_{\cc}(\nn)$ then determines an element $[A,B,i,0] \in \M_{\cc}(s,n)^{T_\bullet}$ by defining
\begin{equation}\textstyle{\label{eq:Tbulletdef}
A|_{\C^{\nn_k}} := A_k, \quad B|_{\C^{\nn_k}} := B_k, \quad i := i_1 + \dots + i_s.
}\end{equation}
One can then check that we have a well-defined map
\begin{equation}\textstyle{ \label{eq:Tbulletmap}
\coprod_{|\nn|=n} \M_{\cc}(\nn) \to \M_{\cc}(s,n)^{T_\bullet}, \quad \left( [A_1,B_1,i_1], \dots, [A_s,B_s,i_s]\right)\mapsto [A,B,i,0],
}\end{equation}
where $[A,B,i,0]$ is defined as in \eqref{eq:Tbulletdef}.

\begin{lemma}\label{lem:Tbulletfixedpoints}
The map \eqref{eq:Tbulletmap} is an isomorphism of varieties.
\end{lemma}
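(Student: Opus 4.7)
The argument will closely parallel that of Theorem \ref{thm:variso}: first I will show the map is bijective on points, then check that it induces an isomorphism on tangent spaces at every point. Since both source and target are smooth (the latter by \cite[Proposition 1.3]{iversen}, as $T_\bullet$ is linearly reductive acting on the smooth variety $\M_{\cc}(s,n)$), this will be enough.

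\textbf{Bijectivity.} For injectivity, note that the cocharacter $g:T_\bullet \to \GL_n(\C)$ witnessing $T_\bullet$-fixedness is unique (since $\GL_n(\C)$ acts freely on $M_0^{\st}(s,n)$), so the weight decomposition $\C^n=\bigoplus_k V^k$ is intrinsic. If tuples indexed by $\nn$ and $\nn'$ both map to $[A,B,i,0]$, then $\nn_k = \dim V^k = \nn'_k$ for every $k$, and any $\GL_n(\C)$-element identifying their images must preserve the weight decomposition, hence lies in $\prod_k \GL_{\nn_k}(\C)$, showing that the tuples already agree in $\M_{\cc}(\nn)$. For surjectivity, given a representative $(A,B,i,j) \in M_0^{\st}(s,n)$ of a $T_\bullet$-fixed point, the analysis preceding \eqref{eq:V^k} together with the analogous computation $j(V^k) \subseteq \C\1_k$ lets us restrict to $(A^k,B^k,i^k,j^k) \in M(1,\dim V^k)$. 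These restrictions satisfy the moment map equation (since $A,B$ preserve $V^k$ and $i(\C\1_k)\subseteq V^k$), and inherit stability: if $S\subseteq V^k$ is $A^k,B^k$-invariant with $\im i^k \subseteq S$, then $S \oplus \bigoplus_{\ell\ne k}V^\ell$ is $A,B$-invariant and contains $\im i$, forcing $S=V^k$. By \cite[Proposition 2.7]{nak1} applied to each factor, $j^k=0$. Choosing any graded isomorphism $\bigoplus_k V^k \cong \bigoplus_k \C^{\nn_k}$ then produces a preimage in $\M_{\cc}(\nn)$.

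\textbf{Tangent space isomorphism.} Since $T_\bullet$ is reductive, the tangent space to $\M_{\cc}(s,n)^{T_\bullet}$ at a fixed point equals the $T_\bullet$-invariant subspace of the ambient tangent space, computed as the middle cohomology of Complex \eqref{eq:complex}. The key observation is that both $V^k$ and the framing line $\C\1_k \subseteq \C^s$ carry $T_\bullet$-weight $z^{k-1}$, so the summands $\Hom(V^k,V^\ell)$, $\Hom(\C\1_k,V^\ell)$, and $\Hom(V^k,\C\1_\ell)$ all carry weight $z^{\ell-k}$. Taking $T_\bullet$-invariants retains exactly the $\ell=k$ terms and yields the direct sum over $k$ of the complexes computing $\T_{[A^k,B^k,i^k]}\M_{\cc_k}(1,\nn_k)$, which is the tangent space of $\M_{\cc}(\nn)$ at the preimage point. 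Under this identification the differential of \eqref{eq:Tbulletmap} is the obvious block-diagonal isomorphism.

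\textbf{Expected obstacle.} The routine part is the block-decomposition bookkeeping, which mirrors the $s=1$, $m$-cyclic analogue already carried out in Theorem \ref{thm:variso}. The main care is required in the weight computation for the tangent complex: one must simultaneously track the scaling action of $T_\bullet$ on the framing space $\C^s$ and the conjugation action through the cocharacter $g(z)$ on $\C^n$, and verify that these combine so that the $T_\bullet$-invariant subcomplex decomposes cleanly into the $s$ complexes of type $\M_{\cc_k}(1,\nn_k)$ with no off-diagonal contributions.
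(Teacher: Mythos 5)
Your proof is correct, and it is essentially the argument the paper has in mind: the paper simply cites this as a straightforward generalization of \cite[Lemma 3.2]{nak4}, and your weight-space decomposition under the unique cocharacter $g(z)$, the blockwise stability/moment-map check with $j^k=0$ via \cite[Proposition 2.7]{nak1}, and the identification of the tangent space of the fixed locus with the $T_\bullet$-invariant part of Complex \eqref{eq:complex} is exactly that generalization, mirroring the proof of Theorem \ref{thm:variso}. No gaps to report.
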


\begin{proof}
This is a straight-forward generalization of \cite[Lemma 3.2]{nak4}.
\end{proof}

We fix once and for all an $r \in \N^+$ and a partition of $r$ of length $s$,
\[\textstyle{ \underline{r} := (r_1, \dots, r_s).}\]
Let $R' = \lcm\{r_1, \dots, r_s\}$ and define
\[\textstyle{ R := \begin{cases} R', & \text{if } R' \left( \frac{1}{r_k} + \frac{1}{r_\ell} \right) \in 2\Z \text{ for all } k, \ell, \\
2R', & \text{otherwise}. \end{cases} }\]
Consider the product variety $\M_{\cc}(\nn) = \M_{\cc_1}(1,\nn_1) \times \cdots \times \M_{\cc_s}(1,\nn_s)$. Each component $\M_{\cc_\ell}(1,\nn_\ell)$, for $\ell=1,\dots,s$, carries with it the action of a $2$-dimensional torus $T = \C^* \times \C^*$ given by setting $s=1$ in Equation \eqref{eq:Taction}. Let $T_\star = (\C^*)^s \times \C^*$ and define a $T_\star$-action on $\M_{\cc_\ell}(1,\nn_\ell)$ via the map
\[\textstyle{ T_\star \to T, \quad (e,t) \mapsto (e_\ell, t^{R/r_\ell}). }\]
That is, $T_\star$ acts on $\M_{\cc_\ell}(1,\nn_\ell)$ by
\[\textstyle{ (e,t) \star [A_\ell,B_\ell,i_\ell] = (e_\ell,t^{R/r_\ell}) \cdot [A_\ell,B_\ell,i_\ell] = [t^{R/r_\ell}A_\ell, t^{-R/r_\ell} B, i_\ell e_\ell^{-1}t^{-\cc_\ell R / r_\ell}],}\]
for all $(e,t) \in T_\star$ and $[A_\ell, B_\ell, i_\ell] \in \M_{\cc_\ell}(1,\nn_\ell)$. Then $T_\star$ acts on the product $\M_{\cc}(\nn)$ by acting on each of its components, i.e.\
\[\textstyle{ (e,t) \star ([A_1,B_1,i_1], \dots, [A_s,B_s,i_s]) = ((e,t) \star [A_1,B_1,i_1], \dots, (e,t) \star [A_s,B_s,i_s]).}\]

\begin{lemma}\label{lem:fixedpoints}
The set $\M_{\cc}(\nn)^{T_\star}$ is in one-to-one correspondence with the set
\[\textstyle{ \{ (I_1,\dots,I_s) \; | \; I_\ell \text{ is a semi-infinite monomial of charge } \cc_\ell \text{ and energy } \nn_\ell \}.}\]
\end{lemma}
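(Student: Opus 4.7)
The plan is to reduce to a well-known classification of the $(\C^* \times \C^*)$-fixed points of a single factor $\M_c(1,n)$, and then translate between Young diagrams and semi-infinite monomials via Lemma \ref{lem:montoyoung}.

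First, since $T_\star$ acts on $\M_{\cc}(\nn)$ componentwise, the fixed point set decomposes as
\[
\M_{\cc}(\nn)^{T_\star} = \prod_{\ell=1}^s \M_{\cc_\ell}(1,\nn_\ell)^{T_\star},
\]
so it suffices to describe $\M_{\cc_\ell}(1,\nn_\ell)^{T_\star}$ for each $\ell$. The $T_\star$-action on the $\ell$-th factor factors through the group homomorphism $T_\star \to \C^* \times \C^*$, $(e,t) \mapsto (e_\ell, t^{R/r_\ell})$. This map is surjective, since projection to the $\ell$-th coordinate of $(\C^*)^s$ is onto and $z \mapsto z^{R/r_\ell}$ is onto $\C^*$. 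Consequently, a point of $\M_{\cc_\ell}(1,\nn_\ell)$ is $T_\star$-fixed if and only if it is fixed under the standard action of $\C^* \times \C^*$ obtained by setting $s=1$ in Equation \eqref{eq:Taction}.

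Next, I would invoke the classical classification of the $(\C^* \times \C^*)$-fixed points of $\M_{c}(1,n)$: they are in bijection with Young diagrams of $n$ boxes. This can be established along the lines of Remark \ref{rmk:F}: a point $[A,B,i]$ is fixed if and only if there is a cocharacter $g: \C^* \times \C^* \to \GL_n(\C)$ inducing a bigrading $\C^n = \bigoplus_{p,q} V_{p,q}$ such that $A(V_{p,q}) \subseteq V_{p+1,q}$, $B(V_{p,q}) \subseteq V_{p,q+1}$, and $\im i \subseteq V_{0,0}$ (with an appropriate shift by $c$). Stability (together with $j=0$, which forces $[A,B]=0$) then implies that each nonzero $V_{p,q}$ is one-dimensional, $(0,0)$ lies in the support, and the set of occupied bidegrees is closed under predecessors in $(\N)^2$; hence it forms the cells of a Young diagram with exactly $n$ boxes.

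Combining these observations, a $T_\star$-fixed point of $\M_{\cc}(\nn)$ corresponds to an $s$-tuple of Young diagrams $(\lambda^1,\dots,\lambda^s)$ with $|\lambda^\ell|=\nn_\ell$. Applying Lemma \ref{lem:montoyoung} componentwise, this is the same datum as an $s$-tuple $(I_1,\dots,I_s)$ where $I_\ell$ is a semi-infinite monomial of charge $\cc_\ell$ and energy $\nn_\ell$, yielding the desired bijection. The main technical point is the surjectivity of the reparametrization $T_\star \to \C^* \times \C^*$ on each factor, which is what makes $T_\star$-invariance coincide with standard two-dimensional torus invariance; once that is in hand, the rest of the argument reduces to the fixed-point combinatorics of the Hilbert-scheme-type variety $\M_c(1,n)$ and the dictionary of Lemma \ref{lem:montoyoung}.
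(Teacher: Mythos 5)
Your proposal is correct and follows essentially the same route as the paper: decompose the fixed locus componentwise, observe that the reparametrization $T_\star \to \C^*\times\C^*$ is surjective on each factor so that $T_\star$-fixedness agrees with $T$-fixedness, and then combine the Young-diagram classification of $\M_c(1,n)^T$ with Lemma~\ref{lem:montoyoung}. The only cosmetic difference is that the paper simply cites \cite[Proposition 2.9]{nak2} for the Young-diagram classification, whereas you sketch the bigraded-module argument behind it.
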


\begin{proof}
We first prove that
\[\textstyle{ \M_{\cc}(\nn)^{T_\star} = \M_{\cc_1}(1,\nn_1)^T \times \cdots \times \M_{\cc_s}(1,\nn_s)^T.}\]
Let $([A_1,B_1,i_1), \dots, [A_s,B_s,i_s]) \in \M_{\cc}(\nn)^{T_\star}$. Fix $\ell \in \{1,\dots,s\}$. For all $(e,t) \in T = \C^* \times \C^*$, choose $\xi \in \C^*$ such that $\xi^{R/r_\ell} = t$. Then
\[\textstyle{ (e,t) \cdot [A_\ell,B_\ell,i_\ell] = ((1,\dots,e,\dots,1), \xi) \star [A_\ell,B_\ell,i_\ell] = [A_\ell,B_\ell,i_\ell].}\]
Therefore, $\M_{\cc}(\nn)^{T_\star} \subseteq \M_{\cc_1}(1,\nn_1)^T \times \cdots \times \M_{\cc_s}(1,\nn_s)^T$. The reverse inclusion follows by construction of the action of $T_\star$ on $\M_{\cc}(\nn)$.

Now, by \cite[Proposition 2.9]{nak2}, $\M_{\cc_\ell}(1,\nn_\ell)^T$ is in one-to-one correspondence with the set of Young diagrams of size $\nn_\ell$, which is itself in one-to-one correspondence with the set of semi-infinite monomials of charge $\cc_\ell$ and energy $\nn_\ell$ by Lemma \ref{lem:montoyoung}.
\end{proof}

In light of Lemma \ref{lem:fixedpoints}, we will henceforth identify points of $\M_{\cc}(\nn)^{T_\star}$ with $s$-tuples of semi-infinite monomials.

Define an action of $\Z_R$ on $\M_{\cc}(\nn)$ via the embedding
\[\textstyle{ \Z_R \to T_\star, \quad k \mapsto (1,\omega^k), }\]
where $\omega = e^{2 \pi \sqrt{-1} / R}$. Then $\Z_R$ acts on the $\ell$-th component, $\M_{\cc_\ell}(1,\nn_\ell)$, of $\M_{\cc}(\nn)$ via the embedding
\[\textstyle{ \Z_R \to T, \quad k \mapsto (1,\omega^{R/r_\ell}) = (1, e^{2 \pi \sqrt{-1}/r_\ell}). }\]
Thus,
\[\textstyle{ \M_{\cc}(\nn)^{\Z_R} = \M_{\cc_1}(1,\nn_1)^{\Z_{r_1}} \times \cdots \times \M_{\cc_s}(1,\nn_s)^{\Z_{r_s}},}\]
where the action of $\Z_{r_\ell}$ on $\M_{\cc_\ell}(1,\nn_\ell)$ is defined as in Equation \eqref{eq:Zmaction}. By Theorem \ref{thm:variso}, we know that $\M_{\cc_\ell}(1,\nn_\ell)^{\Z_{r_\ell}} \cong \coprod_{|\vv^\ell|= \nn_\ell} \MM(r_\ell; \vv^\ell, \1_{\bar{\cc}_\ell})$. Hence, we define
\[\textstyle{ \MM_{\cc}(\vv^1,\dots,\vv^s) := \MM (r_1; \vv^1, \1_{\bar{\cc}_1}) \times \cdots \times \MM (r_s; \vv^s, \1_{\bar{\cc}_s}),}\]
and obtain
\[\textstyle{ \M_{\cc}(\nn)^{\Z_R} \cong \coprod_{|\vv^\ell|=\nn_\ell} \MM_{\cc}(\vv^1,\dots,\vv^s).}\] 
We summarize the various fixed point varieties with the following diagram of inclusions:
\begin{equation}\textstyle{\label{eq:inclusions}
\begin{matrix}
\M_{\cc}(s,n) \supseteq \M_{\cc}(s,n)^{T_\bullet} \cong \coprod_{\nn} \M_{\cc}(\nn) \supseteq & \coprod_{\nn} \M_{\cc}(\nn)^{\Z_R} & \supseteq \coprod_{\nn} \M_{\cc}(\nn)^{T_\star}. \\
& \rotatebox{270}{$\cong \; \;$} & \\
& \bigskip \coprod_{\vv^\ell} \MM_{\cc}(\vv^1,\dots,\vv^s) &
\end{matrix}
}\end{equation}

We now consider the localized $T_\star$-equivariant cohomology of $\M_{\cc}(\nn)$. Denote the one-dimensional $T_\star$-modules
\[\textstyle{ (e,t) \mapsto e_k, \quad \text{and} \quad (e,t) \mapsto t,}\]
by $e_k$ and $t$, respectively, and denote the tensor product of such modules by juxtaposition. Moreover, we denote the first Chern classes of 
\[\textstyle{ e_k \mapsto \pt, \quad \text{and} \quad t \mapsto \pt,}\]
by $b_k$ and $\epsilon$, respectively. Thus,
\[\textstyle{ \HH_{T_\star}^*(\M_{\cc}(\nn)) = H_{T_\star}(\M_{\cc}(\nn)) \otimes_{\C[b_1,\dots,b_s,\epsilon]} \C(b_1,\dots,b_s,\epsilon).}\]
Since $\M_{\cc}(\nn)$ has real dimension $4|\nn|$, we define a bilinear form $\langle \mbox{-},\mbox{-} \rangle_{\nn,\cc}$ on $\HH^{2|\nn|}_{T_\star}(\M_{\cc}(\nn))$ as in \eqref{eq:bilinear}, induced by
\[\textstyle{ i: \M_{\cc}(s,n)^{T_\star} \hookrightarrow \M_{\cc}(\nn) \quad \text{and} \quad p: \M_{\cc}(\nn)^{T_\star} \twoheadrightarrow \pt. }\]
We extend to a bilinear form $\langle \mbox{-},\mbox{-} \rangle$ on $\bigoplus_{\nn,\cc} \HH_{T_\star}^{2|\nn|}(\M_{\cc}(\nn)) \cong \bigoplus_{n,\cc} \M_{\cc}\left(\M_{\cc}(s,n)^{T_\bullet}\right)$ by
\[\textstyle{ \langle \mbox{-},\mbox{-} \rangle := \sum_{\nn,\cc} \langle \mbox{-},\mbox{-} \rangle_{\nn,\cc}.}\]
We also define a bilinear form $\langle \mbox{-},\mbox{-} \rangle_{\nn,\mm,\cc,\dee}$ on $\HH_{T_\star}^{2(|\nn|+|\mm|)}(\M_{\cc}(\nn) \times \M_{\dee}(\mm))$ as in Equation \eqref{eq:bilinearProd}, which we extend to a bilinear form $\langle \mbox{-},\mbox{-} \rangle$ on $\bigoplus_{\nn,\mm,\cc,\dee} \HH_{T_\star}^{2(|\nn|+|\mm|)}(\M_{\cc}(\nn) \times \M_{\dee}(\mm))$ by
\[\textstyle{ \langle \mbox{-},\mbox{-} \rangle := \sum_{\nn,\mm,\cc,\dee} \langle \mbox{-},\mbox{-} \rangle_{\nn,\mm,\cc,\dee}.}\]
Our goal will be to construct geometric operators on $\bigoplus_{\nn,\cc}\HH_{T_\star}^{2|\nn|}(\M_{\cc}(\nn))$ as in Equation \eqref{eq:operator}. In order to simplify computations, it will be useful for us to introduce an orthonormal $\C(b_1,\dots,b_s,\epsilon)$-basis for $\HH_{T_\star}^*(\M_{\cc}(\nn))$. For each $\II \in \M_{\cc}(\nn)^{T_\star}$, the $T_\star$-action on $\M_{\cc}(\nn)$ induces an action on the tangent space $\T_{\II} = \T_{\II}(\M_{\cc}(\nn))$. The decomposition of $\T_{\II}$ into one-dimensional $T_\star$ modules is given in the following lemma.

\begin{lemma}
Let $\II = (\II_1, \dots, \II_s) \in \M_{\cc}(\nn)^{T_\star}$. Then, as a $T_\star$-module,
\[\textstyle{ \T_{\II} \cong \bigoplus_{\ell=1}^s \left( \bigoplus_{(i,j) \in \lambda(\II_\ell)} (t^{-h_{\lambda(\II_\ell)}(i,j) R / r_\ell} \oplus t^{h_{\lambda(\II_\ell)}(i,j) R / r_\ell}) \right),}\]
where $\lambda(\II_\ell)$ is the Young diagram associated to $\II_\ell$ and $h_{\lambda(\II)}$ is the relative hook length (see equations \eqref{eq:montoyoung} and \eqref{eq:hook}).
\end{lemma}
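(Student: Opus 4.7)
The plan is to use the product structure of $\M_\cc(\nn)$ together with the definition of the $T_\star$-action to reduce the statement to a single, well-known tangent space computation. Since $\M_\cc(\nn) = \M_{\cc_1}(1, \nn_1) \times \cdots \times \M_{\cc_s}(1, \nn_s)$ and $T_\star$ acts on the $\ell$-th factor through the homomorphism $(e, t) \mapsto (e_\ell, t^{R/r_\ell})$ followed by the ordinary $T = (\C^*)^2$-action on $\M_{\cc_\ell}(1,\nn_\ell)$, the tangent space splits as
\[
\T_\II \cong \bigoplus_{\ell=1}^s \T_{\II_\ell}(\M_{\cc_\ell}(1, \nn_\ell)),
\]
and each summand inherits its $T_\star$-module structure via the same reparametrization. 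Thus it suffices, for each $\ell$, to identify $\T_{\II_\ell}(\M_{\cc_\ell}(1, \nn_\ell))$ as a $T$-module and then substitute $t \mapsto t^{R/r_\ell}$ in the $T$-weights.

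For the $s=1$ case, I would appeal to the classical arm/leg formula for the tangent space of $\M(1,n)$ at a torus-fixed point: if $\lambda$ is the Young diagram attached to the fixed point, then, as a $T$-module,
\[
\T_\lambda \cong \bigoplus_{(i,j) \in \lambda} \bigl( t_1^{-\ell_\lambda(i,j)} t_2^{a_\lambda(i,j)+1} \oplus t_1^{\ell_\lambda(i,j)+1} t_2^{-a_\lambda(i,j)} \bigr),
\]
where $t_1, t_2$ are the two standard $\C^*$-characters. This formula can be derived directly from Complex~\eqref{eq:complex} by computing the alternating sum of $T$-characters of its three terms using the weight-space decomposition of $V = \C^{\nn_\ell}$ determined by the fixed-point condition (a basis of $V$ is indexed by the boxes of $\lambda$ via $(i,j) \mapsto A^{i-1} B^{j-1} i(1)$). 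The charge $\cc_\ell$ enters only as a twist of the $e_\ell$-weight on the $\Hom(W,V)$ and $\Hom(V,W)$ summands; because $W = \C$ is one-dimensional, the $e_\ell$-shifts from these two terms are inverse to one another and cancel in the alternating character sum, so neither $e_\ell$ nor $\cc_\ell$ survives in the final $t$-weights.

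Our $T$-action has $t_1 = t$ and $t_2 = t^{-1}$, so using the identity $\ell_\lambda(i,j) + a_\lambda(i,j) + 1 = h_\lambda(i,j)$ collapses both monomials above to $t^{\pm h_\lambda(i,j)}$. Pulling back along the reparametrization $t \mapsto t^{R/r_\ell}$ then yields exactly the summand indexed by $\ell$ in the statement, and summing over $\ell = 1, \dots, s$ completes the proof.

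The main obstacle is the character bookkeeping in the $s=1$ case, specifically verifying that the charge $\cc_\ell$ contributes only to the $e_\ell$-weight (and not to the $t$-weight) of the tangent space. Once this cancellation is pinned down by a direct computation on \eqref{eq:complex}, the reduction via the product structure and the pullback along $T_\star \to T$ are a routine bookkeeping exercise.
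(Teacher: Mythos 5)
Your overall route coincides with the paper's: split $\T_{\II} \cong \bigoplus_{\ell} \T_{\II_\ell}(\M_{\cc_\ell}(1,\nn_\ell))$ via the product structure, handle the $s=1$ case, and substitute $t \mapsto t^{R/r_\ell}$; the paper simply cites \cite[Proposition 2.2]{savlic} for the $s=1$ step, while you propose to rederive it from the classical arm/leg formula, and your collapse of $t_1^{-\ell_\lambda(i,j)}t_2^{a_\lambda(i,j)+1}$ and $t_1^{\ell_\lambda(i,j)+1}t_2^{-a_\lambda(i,j)}$ to $t^{\mp h_\lambda(i,j)}$ under $t_1 = t$, $t_2 = t^{-1}$ is correct.

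However, the justification you give for the step you yourself flag as the crux --- why $e_\ell$ and $\cc_\ell$ drop out --- is wrong as stated, on two counts. First, the charge does not enter ``only as a twist of the $e_\ell$-weight'': the $T_\star$-action sends $i_\ell \mapsto i_\ell\, e_\ell^{-1} t^{-\cc_\ell R/r_\ell}$, so $\cc_\ell$ shifts the $t$-weight of the framing summands, which is precisely the weight you need to control. Second, inverse character twists on the two summands $\Hom(W,V)$ and $\Hom(V,W)$ do not cancel in the alternating character sum: both occur with the same sign, and a sum of the form $\chi^{-1}X + \chi Y$ retains its $\chi$-dependence in general. The true cancellation is internal to each block: at a fixed point the gauge cocharacter $g(e,t)$ satisfies $g\,i = i\, e_\ell^{-1} t^{-\cc_\ell R/r_\ell}$, and since $i(1)$ generates $V$ under $A,B$, every weight of $V$ carries the uniform factor $e_\ell^{-1}t^{-\cc_\ell R/r_\ell}$; this factor cancels the framing twist separately inside the $\Hom(W,V)$-block and inside the $\Hom(V,W)$-block (and trivially in the $\Hom(V,V)$-blocks), leaving pure $t$-powers. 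Simpler still: because $W = \C$ is one-dimensional, the scalar $h = e_\ell t^{\cc_\ell R/r_\ell}\id \in \GL_{\nn_\ell}(\C)$ absorbs the framing twist, so the twisted and untwisted actions literally coincide on $\M_{\cc_\ell}(1,\nn_\ell)$, and the action reduces to the standard one with $t_1 = t^{R/r_\ell}$, $t_2 = t^{-R/r_\ell}$, $t_1 t_2 = 1$, to which the classical formula (equivalently \cite[Proposition 2.2]{savlic}) applies verbatim. With either repair your argument goes through.
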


\begin{proof}
We have that
\[\textstyle{ \T_{\II}(\M_{\cc}(\nn)) \cong \bigoplus_{\ell=1}^s \T_{\II_\ell}(\M_{\cc_\ell}(1,\nn_\ell)).}\]
The tangent space of $\M_{\cc_\ell}(1,\nn_\ell)$ at $\II_\ell$ may then be computed by replacing $t$ by $t^{R/r_\ell}$ in \cite[Proposition 2.2]{savlic}.
\end{proof}

It will be convenient to use the decomposition $\T_{\II} = \T_{\II}^+ \oplus \T_{\II}^-$, where
\[\textstyle{ \T_{\II}^{\pm} := \bigoplus_{\ell=1}^s \left( \bigoplus_{(i,j) \in \lambda(\II_\ell)} t^{\pm h_{\lambda(\II_\ell)} (i,j) R / r_\ell} \right).}\]

\begin{lemma}
For $\II \in \M_{\cc}(\nn)^{T_\star}$, the equivariant Euler classes of $\T_{\II}^+$ and $\T_{\II}^-$ are given by
\[\textstyle{ e_{T_\star}(\T_{\II}^+) = \prod_{\ell=1}^s \left( \prod_{(i,j) \in \lambda(\II_\ell)} h_{\lambda(\II_\ell)}(i,j) \frac{R}{r_\ell} \epsilon \right),}\]
\[\textstyle{ e_{T_\star}(\T_{\II}^-) = \prod_{\ell=1}^s \left( \prod_{(i,j) \in \lambda(\II_\ell)} -h_{\lambda(\II_\ell)}(i,j) \frac{R}{r_\ell} \epsilon \right) = (-1)^{|\nn|} e_{T_\star}(\T_{\II}^+).}\]
\end{lemma}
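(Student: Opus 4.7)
The strategy is a direct computation that combines the weight-space decomposition of $\T_{\II}$ established in the preceding lemma with the standard multiplicative property of equivariant Euler classes over direct sums. No substantial obstacle is expected; the argument amounts to unpacking the definitions of $\epsilon$ and $e_{T_\star}$.

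First, I would recall the basic fact that for a one-dimensional $T_\star$-module of the form $t^k$ (i.e., one on which $T_\star$ acts by $(e,t) \mapsto t^k$), the corresponding equivariant line bundle $t^k \to \pt$ has first Chern class $k\epsilon$. This follows from the fact that $\epsilon$ was defined as the first Chern class of $t \to \pt$, together with the multiplicativity of the first Chern class under tensor products.

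Next, applying the decomposition
\[
\T_{\II}^{+} = \bigoplus_{\ell=1}^s \bigoplus_{(i,j) \in \lambda(\II_\ell)} t^{h_{\lambda(\II_\ell)}(i,j) R / r_\ell}
\]
from the preceding lemma, each summand is a one-dimensional $T_\star$-module with first Chern class $h_{\lambda(\II_\ell)}(i,j)(R/r_\ell)\epsilon$. Since the equivariant Euler class of a direct sum of line bundles equals the product of the individual first Chern classes, this immediately yields the formula for $e_{T_\star}(\T_{\II}^{+})$.

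The computation for $e_{T_\star}(\T_{\II}^{-})$ is identical except that each weight is negated, introducing a factor of $-1$ per summand. The total number of summands across all $\ell$ equals $\sum_{\ell=1}^s |\lambda(\II_\ell)| = \sum_{\ell=1}^s \nn_\ell = |\nn|$, since $\II_\ell$ is a semi-infinite monomial of energy $\nn_\ell$ and hence (by Lemma \ref{lem:montoyoung}) corresponds to a Young diagram with $\nn_\ell$ boxes. Factoring out these signs gives the desired relation $e_{T_\star}(\T_{\II}^{-}) = (-1)^{|\nn|} e_{T_\star}(\T_{\II}^{+})$, completing the proof.
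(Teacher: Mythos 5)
Your proof is correct and follows the same route as the paper, which simply notes that the formulas follow directly from the definitions of $\T_{\II}^+$ and $\T_{\II}^-$; you have just made explicit the multiplicativity of the equivariant Euler class and the count of $|\nn|$ boxes giving the sign $(-1)^{|\nn|}$.
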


\begin{proof}
This follows directly from the definitions of $\T_{\II}^+$ and $\T_{\II}^-$.
\end{proof}

For each $\II \in \M_{\cc}(\nn)^{T_\star}$, let
\[\textstyle{ [\II] := \frac{i_*(1_{\II})}{e_{T_\star}(\T_{\II}^-)} \in \HH^{2|\nn|}_{T_\star}(\M_{\cc}(\nn)),}\]
where $1_{\II}$ is the unit in $\HH_{T_\star}^*(\pt)$ and $e_{T_\star}(\T_{\II}^-)$ is to be interpreted as an invertible element in this ring. Since the elements $1_{\II}$ form a $\C(b_1, \dots, b_s, \epsilon)$-basis of $\bigoplus_{\nn,\cc} \HH_{T_\star}^*\left((\M_{\cc}(\nn))^{T_\star}\right)$, by the Localization Theorem (Theorem \ref{thm:localization}), the elements $[\II]$ form a $\C(b_1,\dots,b_s,\epsilon)$-basis of $\bigoplus_{\nn,\cc} \HH_{T_\star}^*(\M_{\cc}(\nn))$.

\begin{lemma}\label{lem:orthonormal}
The $[\II]$ are orthonormal with respect to the bilinear form $\langle \mbox{-},\mbox{-} \rangle$.
\end{lemma}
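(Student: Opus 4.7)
The plan is to reduce everything to an explicit computation on the fixed-point set by invoking the Localization Theorem (Theorem \ref{thm:localization}). Since the bilinear form $\langle\mbox{-},\mbox{-}\rangle$ vanishes identically on pairs taken from different summands $\HH^{2|\nn|}_{T_\star}(\M_{\cc}(\nn))$ and $\HH^{2|\mm|}_{T_\star}(\M_{\dee}(\mm))$ whenever $(\nn,\cc)\ne(\mm,\dee)$, it suffices to fix $(\nn,\cc)$ and to verify $\langle [\II],[\JJ]\rangle_{\nn,\cc}=\delta_{\II,\JJ}$ for $\II,\JJ\in\M_{\cc}(\nn)^{T_\star}$. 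Throughout, let $i_{\KK}\colon\{\KK\}\hookrightarrow\M_{\cc}(\nn)$ denote the inclusion of a single fixed point.

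The first step is to compute $i_{\KK}^*([\II])$ for every $\KK\in\M_{\cc}(\nn)^{T_\star}$ using the self-intersection formula $i_{\KK}^*\circ i_{\II*}(1_{\II})=\delta_{\II,\KK}\,e_{T_\star}(\T_{\II})$. Combined with the splitting $e_{T_\star}(\T_{\II})=e_{T_\star}(\T_{\II}^+)\,e_{T_\star}(\T_{\II}^-)$ from the decomposition $\T_\II = \T_\II^+ \oplus \T_\II^-$, this gives
\[
i_{\KK}^*([\II]) \; = \; \frac{\delta_{\II,\KK}\, e_{T_\star}(\T_{\II})}{e_{T_\star}(\T_{\II}^-)} \; = \; \delta_{\II,\KK}\, e_{T_\star}(\T_{\II}^+).
\]

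The second step is to compute $[\II]\cup[\JJ]$ and then apply $(i_*)^{-1}$. Because the localization map $\alpha\mapsto\bigl(i_\KK^*(\alpha)/e_{T_\star}(\T_\KK)\bigr)_{\KK}$ is an \emph{algebra} isomorphism (Theorem \ref{thm:localization}), we immediately obtain
\[
(i_*)^{-1}([\II]\cup[\JJ])\;=\;\left(\frac{i_\KK^*([\II])\,i_\KK^*([\JJ])}{e_{T_\star}(\T_\KK)}\right)_{\KK}\;=\;\left(\frac{\delta_{\II,\KK}\,\delta_{\JJ,\KK}\,e_{T_\star}(\T_\II^+)^2}{e_{T_\star}(\T_\KK)}\right)_{\KK},
\]
which is supported only at $\KK=\II=\JJ$. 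The third step then applies $p_*$, which simply sums the components over $\M_{\cc}(\nn)^{T_\star}$, yielding
\[
p_*(i_*)^{-1}([\II]\cup[\JJ])\;=\;\delta_{\II,\JJ}\,\frac{e_{T_\star}(\T_{\II}^+)}{e_{T_\star}(\T_{\II}^-)}.
\]

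The final step uses the relation $e_{T_\star}(\T_{\II}^-)=(-1)^{|\nn|}e_{T_\star}(\T_{\II}^+)$ established in the preceding lemma, which turns the ratio above into $(-1)^{|\nn|}\delta_{\II,\JJ}$. Multiplying by the sign $(-1)^{|\nn|}$ from the definition of $\langle\mbox{-},\mbox{-}\rangle_{\nn,\cc}$ in \eqref{eq:bilinear} (applied with $n=|\nn|$) gives $\langle[\II],[\JJ]\rangle_{\nn,\cc}=\delta_{\II,\JJ}$, as desired. There is no real obstacle here: the argument is a direct bookkeeping check, and the only delicate point is to keep track of the two signs arising from the orientation convention in the definition of the bilinear form and from the identification $e_{T_\star}(\T_{\II}^-)=(-1)^{|\nn|}e_{T_\star}(\T_{\II}^+)$, which cancel exactly to produce $1$.
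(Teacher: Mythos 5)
Your computation is correct and is essentially the argument the paper intends: the paper simply defers to \cite[Proposition 2.4]{savlic}, whose proof is exactly this localization bookkeeping (self-intersection formula, multiplicativity $e_{T_\star}(\T_{\II})=e_{T_\star}(\T_{\II}^+)e_{T_\star}(\T_{\II}^-)$, pushforward to a point, and cancellation of the sign $(-1)^{|\nn|}$ against $e_{T_\star}(\T_{\II}^-)=(-1)^{|\nn|}e_{T_\star}(\T_{\II}^+)$). Your write-up just makes that cited argument explicit, so there is nothing to correct.
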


\begin{proof}
The proof is completely analogous to \cite[Proposition 2.4]{savlic}.
\end{proof}

Define
\begin{equation}\textstyle{\label{eq:A}
\A = \vspan_{\C} \left\{ [\II] \; | \; \II \in (\M_{\cc}(\nn))^{T_\star}, \; \nn \in \N^s, \; \cc \in \Z^s \right\}.
}\end{equation}
Then $\A$ is a full $\C$-lattice in $\bigoplus_{\nn,\cc} \HH^*_{T_\star}(\M_{\cc}(\nn))$. One has the $(\N \times \Z)$-grading on $\A$:
\[\textstyle{ \A = \bigoplus_{\nn,\cc} \A_{\cc}(\nn), \quad \A_{\cc}(\nn) = \vspan_{\C} \left\{ [\II] \; | \; \II \in \M_{\cc}(\nn)^{T_\star} \right\}.}\]
It will also be convenient for us to use the following $\Z$-grading on $\A$:
\[\textstyle{ \A = \bigoplus_{c \in \Z} \A(c), \quad \A(c) := \vspan_{\C} \left\{ [\II] \in \A_{\cc}(\nn) \; | \; |\cc| = c \right\}.}\]

\begin{cor}
The restriction of $\langle \mbox{-},\mbox{-} \rangle$ to $\A$ is non-degenerate and $\C$-valued.
\end{cor}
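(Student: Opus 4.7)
The plan is to derive both assertions directly from Lemma \ref{lem:orthonormal}, which states that the basis elements $\{[\II]\}$ of $\A$ are orthonormal with respect to $\langle \mbox{-},\mbox{-} \rangle$, i.e.\ $\langle [\II], [\JJ] \rangle = \delta_{\II,\JJ}$. Once we have this relation in hand, both claims reduce to standard linear-algebraic arguments about orthonormal bases.

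For the $\C$-valuedness, I would write arbitrary elements $a, b \in \A$ as finite $\C$-linear combinations $a = \sum_\II a_\II [\II]$ and $b = \sum_\JJ b_\JJ [\JJ]$ with $a_\II, b_\JJ \in \C$ (this is possible by the definition of $\A$ in \eqref{eq:A}). Bilinearity combined with Lemma \ref{lem:orthonormal} then gives
\[
\langle a, b \rangle = \sum_{\II, \JJ} a_\II b_\JJ \langle [\II], [\JJ] \rangle = \sum_\II a_\II b_\II,
\]
which is a finite sum of products of complex numbers, hence lies in $\C$. The point to emphasize here is that while the ambient form takes values in $\C(b_1, \dots, b_s, \epsilon)$, the explicit factors of $e_{T_\star}(\T_\II^-)$ in the definition of $[\II]$ have been normalized precisely so that all pairings become $0$ or $1$.

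For non-degeneracy, I would argue by contradiction: suppose $a = \sum_\II a_\II [\II] \in \A$ satisfies $\langle a, b \rangle = 0$ for every $b \in \A$. Specializing to $b = [\JJ]$ for each fixed $\JJ$ and applying orthonormality yields $a_\JJ = \langle a, [\JJ] \rangle = 0$, so every coefficient vanishes and $a = 0$. Thus $\langle \mbox{-},\mbox{-} \rangle|_\A$ has trivial left radical, and by the symmetry of the orthonormality relation the right radical is trivial as well.

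Since both parts follow at once from Lemma \ref{lem:orthonormal} by purely formal manipulations, there is no substantive obstacle; the only care needed is to observe that the $\C$-span $\A$ consists of \emph{finite} linear combinations, so that all sums appearing above are well-defined and the coefficients extracted by pairing with basis vectors do indeed lie in $\C$.
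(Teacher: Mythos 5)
Your proposal is correct and takes essentially the same route as the paper, which simply cites Lemma \ref{lem:orthonormal}; you have just written out the routine verification (finite $\C$-linear combinations of the orthonormal $[\II]$, pairing extracts coefficients) that the paper leaves implicit.
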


\begin{proof}
This follows directly from Lemma \ref{lem:orthonormal}.
\end{proof}

\begin{rmk}\label{rmk:AviaKunneth}
Notice that via the K\"unneth formula
\[\textstyle{ \HH_{T_\star}^*(\M_{\cc}(\nn) \cong \HH_{T_\star}^*(\M_{\cc_1}(1,\nn_1)) \otimes \cdots \otimes \HH_{T_\star}^*(\M_{\cc_s}(1,\nn_s)),}\]
the element $[\II] \in \HH_{T_\star}^*(\M_{\cc}(\nn))$, where $\II = (\II_1,\dots,\II_s)$, maps to
\[\textstyle{ [\II_1] \otimes \cdots \otimes [\II_s] \in \HH_{T_\star}^*(\M_{\cc_1}(1,\nn_1)) \otimes \cdots \otimes \HH_{T_\star}^*(\M_{\cc_s}(1,\nn_s)).}\]
Therefore,
\[\textstyle{ \A_{\cc}(\nn) \cong \A_{\cc_1}(\nn_1) \otimes \cdots \otimes \A_{\cc_s}(\nn_s),}\]
where $\A_{\cc_\ell}(\nn_\ell) = \{ [\II_\ell] \; | \; \II_\ell  \in \M_{\cc_\ell}(1,\nn_\ell)^{T_\star} \}.$
\end{rmk}

Let $X$ be an algebraic variety with a $T_\star$-action, and let $E \to X$ be a $T_\star$-equivariant vector bundle. We will denote the $k$-th equivariant Chern class of $E$ by $c_k(E)$. Note that $c_k(E) \in H_{T_\star}^{2k}(X)$. The following lemma will act as our main tool in constructing geometric operators in the following section

\begin{lemma}\label{lem:operator}
Let $\II \in \M_{\cc}(\nn)^{T_\star}$ and $\JJ \in \M_{\dee}(\mm)^{T_\star}$, let $E$ be an equivariant vector bundle on $\M_{\cc}(\nn) \times \M_{\dee}(\mm)$ and let $\beta \in \HH^{2k}_{T_\star}(\M_{\cc}(\nn) \times \M_{\dee}(\mm))$. Then
\[\textstyle{ \langle \beta \cup c_{|\nn|+|\mm|-k}(E) [\II], [\JJ] \rangle = \frac{\beta_{\II,\JJ} \cup c_{|\nn|+|\mm|-k}(E_{(\II,\JJ)})}{e_{T_\star}(\T_{\II}^-) e_{T_\star}(\T_{\JJ}^+)},}\]
where $c_{n+m-k}(E_{(\II,\JJ)}) \in H^*_{T_\star}(\pt) = \C[b_1,\dots,b_s,\epsilon]$ is the polynomial given by the equivariant Chern class of the fibre $E$ over the point $(\II,\JJ)$ and $\beta_{\II,\JJ} = i^*_{\II,\JJ}(\beta)$, where $i_{\II,\JJ}: \{(\II,\JJ)\} \hookrightarrow \M_{\cc}(\nn) \times \M_{\dee}(\mm)$ is the inclusion of the fixed point.
\end{lemma}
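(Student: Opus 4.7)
The plan is to unwind the definition of the inner product, then use the Localization Theorem and the projection formula to evaluate everything at the single fixed point $(\II,\JJ)$.

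First I would translate the left-hand side using the defining property of operators on cohomology. Setting $\alpha := \beta \cup c_{|\nn|+|\mm|-k}(E) \in \HH^{2(|\nn|+|\mm|)}_{T_\star}(\M_{\cc}(\nn) \times \M_{\dee}(\mm))$, viewed as a geometric operator via \eqref{eq:operator}, the identity
\[
\langle \alpha[\II], [\JJ] \rangle_{\M_{\dee}(\mm)} = \langle \alpha, [\II] \otimes [\JJ] \rangle_{\M_{\cc}(\nn) \times \M_{\dee}(\mm)}
\]
reduces the problem to computing the product-type bilinear form from \eqref{eq:bilinearProd}, namely $(-1)^{|\mm|} p_* (i_1 \times i_2)_*^{-1}(\alpha \cup [\II] \otimes [\JJ])$.

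Next I would rewrite the factor $[\II] \otimes [\JJ]$ as a pushforward from the single fixed point. By Remark \ref{rmk:AviaKunneth} and the definition of $[\II]$, the Künneth identification gives
\[
[\II] \otimes [\JJ] = (i_1 \times i_2)_*\!\left( \frac{1_{(\II,\JJ)}}{e_{T_\star}(\T_{\II}^-)\, e_{T_\star}(\T_{\JJ}^-)} \right),
\]
since the tangent space to a product of fixed points splits and each Euler class $e_{T_\star}(\T_\II^-)$ is invertible in the localized ring. Applying the projection formula to $\alpha \cup (i_1 \times i_2)_*(\text{--})$ and then inverting $(i_1 \times i_2)_*$ yields
\[
(i_1 \times i_2)_*^{-1}(\alpha \cup [\II] \otimes [\JJ]) = \frac{(i_1 \times i_2)^*(\alpha) \cup 1_{(\II,\JJ)}}{e_{T_\star}(\T_{\II}^-)\, e_{T_\star}(\T_{\JJ}^-)} = \frac{\alpha_{(\II,\JJ)}}{e_{T_\star}(\T_{\II}^-)\, e_{T_\star}(\T_{\JJ}^-)}\cdot 1_{(\II,\JJ)},
\]
where I write $\alpha_{(\II,\JJ)} := i^*_{(\II,\JJ)}(\alpha)$ as in the statement. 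Pushing forward to a point and absorbing the sign $(-1)^{|\mm|}$ via the relation $e_{T_\star}(\T_{\JJ}^-) = (-1)^{|\mm|} e_{T_\star}(\T_{\JJ}^+)$ converts $e_{T_\star}(\T_{\JJ}^-)$ into $e_{T_\star}(\T_{\JJ}^+)$ in the denominator.

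Finally I would use naturality of equivariant Chern classes with respect to pullbacks: since the inclusion of a fixed point pulls the bundle $E$ back to its fibre $E_{(\II,\JJ)}$, we have $i^*_{(\II,\JJ)}(c_{|\nn|+|\mm|-k}(E)) = c_{|\nn|+|\mm|-k}(E_{(\II,\JJ)})$, hence $\alpha_{(\II,\JJ)} = \beta_{\II,\JJ} \cup c_{|\nn|+|\mm|-k}(E_{(\II,\JJ)})$. Substituting yields the claimed formula. The only step with any substance is the careful bookkeeping of the $(-1)^{|\mm|}$ sign and the conversion between $\T_{\JJ}^-$ and $\T_{\JJ}^+$; the rest is a direct application of the Localization Theorem and the projection formula for Gysin maps.
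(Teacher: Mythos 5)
Your proof is correct. The paper's own proof of this lemma consists of the single line ``See \cite[Lemma 2.6]{savlic},'' so there is no in-paper argument to compare against; your self-contained derivation (rewrite $[\II] \otimes [\JJ]$ as $(i_1 \times i_2)_*$ of the class supported at the single fixed point, apply the projection formula, absorb the $(-1)^{|\mm|}$ via $e_{T_\star}(\T_{\JJ}^-) = (-1)^{|\mm|} e_{T_\star}(\T_{\JJ}^+)$, and invoke naturality of equivariant Chern classes at the fixed point) is exactly the standard localization computation that the cited lemma carries out, and the degree bookkeeping ($\alpha \in \HH^{2(|\nn|+|\mm|)}_{T_\star}$) is handled correctly.
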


\begin{proof}
See \cite[Lemma 2.6]{savlic}.
\end{proof}

\begin{rmk}
The precise statement of \cite[Lemma 2.6]{savlic} differs slightly from ours (since we use the variety $\M_{\cc}(\nn)$ under the action of $T_\star$ rather than $\M_{\cc}(s,n)$ under the action of $T$). However, every step in the proof of \cite[Lemma 2.6]{savlic} applies to Lemma \ref{lem:operator}; thus the two lemmas are essentially the same.
\end{rmk}

The next step will be to construct $T_\star$-equivariant vector bundles over $\M_{\cc}(\nn) \times \M_{\dee}(\mm)$ whose Chern classes will define the appropriate geometric operators (as our ultimate goal is to construct geometric versions of the Heisenberg, Clifford and Chevalley operators from Section \ref{section:1}). We begin by defining vector bundles
\[\textstyle{ \C^n \times_{\GL_n(\C)} M_0^{\st}(s,n) \to \M_{\cc}(s,n), \quad \text{and} \quad \C^s \times \M_{\cc}(s,n) \to \M_{\cc}(s,n),}\]
which we denote by $\V = \V(\cc,s,n)$ and $\W = \W(\cc,s,n)$, respectively. Note that $\V$ and $\W$ are simply the associated bundles of the trivial $\GL_n(\C)$-bundles
\[\textstyle{ \C^n \times M_0^{\st}(s,n) \to M_0^{\st}(s,n), \quad \text{and} \quad \C^s \times M_0^{\st}(s,n) \to M_0^{\st}(s,n).}\]
The bundles $\V$ and $\W$ are $T$-equivariant with respect to the trivial action of $T$ on $\C^n$ and the natural action of $T$ on $\C^s$, respectively. Consider the Hom-bundle $\Hom(\V,\V)$ on $\M_{\cc}(s,n)$. We can define a global section $s: \M_{\cc}(s,n) \to \Hom(\V,\V)$ by defining $s[A,B,i,j]$ to be the (well-defined) linear map
\begin{align*}
\textstyle{ \C^n \times_{\GL_n(\C)} [A,B,i,j]} &\to \textstyle{\C^n \times_{\GL_n(\C)} [A,B,i,j],} \\
\textstyle{\GL_n(\C) \cdot (v, (A,B,i,j))} &\mapsto \textstyle{\GL_n(\C) \cdot (Av, (A,B,i,j)).}
\end{align*}
By a slight abuse of notation, we will denote the section $s$ by $A$. We similarly define sections $B$, $i$ and $j$ of $\Hom(\V,\V)$, $\Hom(\W,\V)$ and $\Hom(\V,\W)$, respectively.

One can extend this construction to a product of moduli spaces. The bundle $\V(\cc,s,n) \to \M_{\cc}(s,n)$ may be extended to a vector bundle over the product $\M_{\cc}(s,n) \times \M_{\dee}(s,m)$ by:
\[\textstyle{ \V(\cc,s,n) \times \M_{\dee}(s,m) \to \M_{\cc}(s,n) \times \M_{\dee}(s,m).}\]
We denote this vector bundle by $\V_1 = \V_1(\cc,\dee,s,n,m)$. Likewise, we extend the bundles $\W(\cc,s,n) \to \M_{\cc}(s,n)$, $\V(\dee,s,m) \to \M_{\dee}(s,m)$ and $\W(\dee,s,m) \to \M_{\dee}(s,m)$ to bundles $\W_1 = \W_1(\cc,\dee,s,n,m)$, $\V_2 = \V_2(\cc,\dee,s,n,m)$ and $\W_2 = \W_2(\cc,\dee,s,n,m)$ over $\M_{\cc}(s,n) \times \M_{\dee}(s,m)$. We then have bundles $\Hom(\V_k, \V_k)$, $\Hom(\W_k,\V_k)$ and $\Hom(\V_k, \W_k)$ as vector bundles over $\M_{\cc}(s,n) \times \M_{\dee}(s,m)$ with sections $A_k$, $B_k$, $i_k$ and $j_k$, where $k=1,2$. We define a three-term, $T$-equivariant complex of vector bundles on $\M_{\cc}(s,n) \times \M_{\dee}(s,m)$ by
\begin{equation}\textstyle{\label{eq:vbcomplex}
\Hom(\V_1, \V_2) \xrightarrow{\zeta} \begin{matrix} t \Hom(\V_1, \V_2) \oplus t^{-1} \Hom(\V_1, \V_2) \\ \oplus \\ \Hom(\W_1, \V_2) \oplus \Hom(\V_1, \W_2) \end{matrix} \xrightarrow{\tau} \Hom(\V_1, \V_2),
}\end{equation}
where
\[\textstyle{ \zeta(X) = \begin{pmatrix} X A_1 - A_2 X \\ X B_1 - B_2 X \\ X i_1 \\ -j_2 X \end{pmatrix}, \quad \text{and} \quad \tau \begin{pmatrix} C \\ D \\ a \\ b \end{pmatrix} = A_2 D - D A_1 + C B_1 - B_2 C + i_2 b + a j_1.}\]
Note that here the modules $t^{\pm 1}$ are the one-dimensional $T$-modules $(e,t) \mapsto t^{\pm 1}$. One verifies that $\tau \circ \zeta = 0$.

\begin{lemma}
The cohomology $\ker \tau / \im \zeta$ of Complex \eqref{eq:vbcomplex} is a vector bundle on $\M_{\cc}(s,n) \times \M_{\dee}(s,m)$.
\end{lemma}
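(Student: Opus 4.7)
The plan is to show that the morphism $\zeta$ is fiberwise injective and the morphism $\tau$ is fiberwise surjective. Once this is established, $\ker\tau$ is a subbundle of the middle term (its rank equals the rank of the middle term minus the rank of $\Hom(\V_1,\V_2)$ at every point), $\im\zeta$ is a subbundle isomorphic to $\Hom(\V_1,\V_2)$, and since $\tau\circ\zeta=0$ gives $\im\zeta\subseteq\ker\tau$, both being subbundles of locally constant rank forces the quotient $\ker\tau/\im\zeta$ to be a vector bundle.

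For fiberwise injectivity of $\zeta$, fix a point $\bigl([A_1,B_1,i_1,j_1],[A_2,B_2,i_2,j_2]\bigr)$ and suppose $X\in\Hom(V_1,V_2)$ lies in $\ker\zeta$. Then $XA_1=A_2X$, $XB_1=B_2X$ and $Xi_1=0$. Set $S=\ker X\subseteq V_1$. The first two relations show that $S$ is both $A_1$- and $B_1$-invariant, and the relation $Xi_1=0$ shows $\im i_1\subseteq S$. By the stability of $(A_1,B_1,i_1,j_1)$ (Definition \ref{defn:stable}), we conclude $S=V_1$, hence $X=0$.

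For fiberwise surjectivity of $\tau$, I will argue by trace duality. Under the trace pairing $\Hom(V_1,V_2)\times\Hom(V_2,V_1)\to\C$, $(X,Y)\mapsto\tr(XY)$, the cokernel of $\tau$ is zero if and only if the only $Y\in\Hom(V_2,V_1)$ satisfying $\tr\bigl(Y\cdot\tau(C,D,a,b)\bigr)=0$ for all $(C,D,a,b)$ is $Y=0$. Using cyclicity of the trace, the vanishing condition separates into
\[\textstyle{YA_2-A_1Y=0,\quad B_1Y-YB_2=0,\quad Yi_2=0,\quad j_1Y=0.}\]
Now set $S'=\ker Y\subseteq V_2$. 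The first two equations imply $S'$ is $A_2$- and $B_2$-invariant; the third implies $\im i_2\subseteq S'$. By the stability of $(A_2,B_2,i_2,j_2)$, we conclude $S'=V_2$, hence $Y=0$, so $\tau$ is fiberwise surjective.

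The main obstacle is the surjectivity step, since it requires interpreting $\tau$ via its trace-dual and unpacking four conditions simultaneously; once this is set up the stability argument is a mirror of the one used for $\zeta$. With both injectivity and surjectivity in hand, the locally-constant rank conclusion completes the proof that $\ker\tau/\im\zeta$ is a vector bundle on $\M_{\cc}(s,n)\times\M_{\dee}(s,m)$.
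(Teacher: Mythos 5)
Your proof is correct and follows the same strategy as the cited reference \cite[Lemma 3.2]{savlic}, which the paper simply points to: fiberwise injectivity of $\zeta$ via the stability of the first factor applied to $\ker X$, fiberwise surjectivity of $\tau$ via the trace-dual map and the stability of the second factor applied to $\ker Y$, and the standard locally-constant-rank conclusion. The trace computation and the invariance checks are all accurate.
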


\begin{proof}
See \cite[Lemma 3.2]{savlic}.
\end{proof}

For $n,m \in \N$, we will denote the vector bundle 
\[\textstyle{ \ker \tau / \im \zeta \to \M_{\cc}(s,n) \times \M_{\dee}(s,m),}\]
by $\K_{\cc,\dee}(s,n,m)$. Notice that, by construction, one has the following vector bundle on $\M_{\cc}(s,n)^{T_\bullet} \times \M_{\dee}(s,n)^{T_\bullet}$:
\[\textstyle{ (\ker \tau / \im \zeta)^{T_\bullet} \to \M_{\cc}(s,n)^{T_\bullet} \times \M_{\dee}(s,n)^{T_\bullet},}\]
which we denote by $\K_{\cc,\dee}(s,n,m)^{T_\bullet}$. By Lemma \ref{lem:Tbulletfixedpoints},
\[\textstyle{ \M_{\cc}(s,n)^{T_\bullet} \times \M_{\dee}(s,m)^{T_\bullet} \cong \coprod_{|\nn|=n, \; |\mm|=m} \M_{\cc}(\nn) \times \M_{\dee}(\mm),}\]
and so we may identify these varieties and consider the restriction of $\K_{\cc,\dee}(s,n,m)^{T_\bullet}$:
\[\textstyle{ \K_{\cc,\dee}(\nn,\mm) := \K_{\cc,\dee}(s,n,m)^{T_\bullet}|_{\M_{\cc}(\nn) \times \M_{\dee}(\mm)},}\]
which is a vector bundle on $\M_{\cc}(\nn) \times \M_{\dee}(\mm)$. On $\M_{\cc_\ell}(1,\nn_\ell) \times \M_{\dee_\ell}(1,\mm_\ell)$, the product of the $\ell$-th components of $\M_{\cc}(\nn)$ and $\M_{\dee}(\mm)$, one has the vector bundle $\K_{\cc_\ell,\dee_\ell}(1,\nn_\ell,\mm_\ell)$. Let
\[\textstyle{ f_\ell: \M_{\cc}(\nn) \times \M_{\dee}(\mm) \to \M_{\cc_\ell}(1,\nn_\ell) \times \M_{\dee_\ell}(1,\mm_\ell),}\]
denote the canonical projection. Then the vector bundle pullback, $f_\ell^*\K_{\cc_\ell,\dee_\ell}(1,\nn_\ell,\mm_\ell)$, is a vector bundle on the full product $\M_{\cc}(\nn) \times \M_{\dee}(\mm)$.

\begin{lemma}\label{lem:vbpullback}
There is an isomorphism of vector bundles
\[\textstyle{ \K_{\cc,\dee}(\nn,\mm) \cong \bigoplus_{\ell=1}^s f_\ell^*(\K_{\cc_\ell,\dee_\ell}(1,\nn_\ell,\mm_\ell)).}\]
\end{lemma}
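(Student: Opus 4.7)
The plan is to exhibit the isomorphism at the level of the defining complex \eqref{eq:vbcomplex}, by decomposing it on $\M_{\cc}(\nn)\times\M_{\dee}(\mm)$ into a direct sum of subcomplexes indexed by pairs $(k,\ell)\in\{1,\ldots,s\}^2$, and observing that passage to $T_\bullet$-fixed parts kills every summand except the diagonal ones $k=\ell$. Each surviving diagonal piece will turn out to be the $f_\ell$-pullback of the complex defining $\K_{\cc_\ell,\dee_\ell}(1,\nn_\ell,\mm_\ell)$, and since taking cohomology commutes with pullback and with direct sums, the claimed isomorphism follows.

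First I would use the weight-space description from Remark \ref{rmk:F} together with Lemma \ref{lem:Tbulletfixedpoints} to produce $T_\bullet$-equivariant splittings $\V_1=\bigoplus_\ell \V_1^\ell$, $\V_2=\bigoplus_\ell \V_2^\ell$, $\W_1=\bigoplus_\ell\W_1^\ell$, $\W_2=\bigoplus_\ell\W_2^\ell$ on $\M_{\cc}(\nn)\times\M_{\dee}(\mm)$, where each $\V_k^\ell$ and $\W_k^\ell$ carries $T_\bullet$-weight $z^{\ell-1}$ and is pulled back via $f_\ell$ from the corresponding bundle on the single-factor quiver variety. By Remark \ref{rmk:F}, the sections $A_k,B_k,i_k,j_k$ respect these decompositions: $A_k$ and $B_k$ each preserve $\V_k^\ell$, while $i_k$ sends $\W_k^\ell$ into $\V_k^\ell$ and $j_k$ sends $\V_k^\ell$ into $\W_k^\ell$.

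Given these splittings, the Hom-bundles in \eqref{eq:vbcomplex} decompose as $\Hom(\V_1,\V_2)=\bigoplus_{k,\ell}\Hom(\V_1^k,\V_2^\ell)$, $\Hom(\W_1,\V_2)=\bigoplus_{k,\ell}\Hom(\W_1^k,\V_2^\ell)$, and $\Hom(\V_1,\W_2)=\bigoplus_{k,\ell}\Hom(\V_1^k,\W_2^\ell)$, with the $(k,\ell)$-summand having $T_\bullet$-weight $z^{\ell-k}$ (the modules $t^{\pm1}$ are $T_\bullet$-trivial because $T_\bullet$ embeds into $T$ as $(1,z,\ldots,z^{s-1},1)$). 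A direct inspection of the formulas for $\zeta$ and $\tau$ shows that both maps are strictly bihomogeneous in $(k,\ell)$, so \eqref{eq:vbcomplex} splits globally as a direct sum over $(k,\ell)$ of three-term subcomplexes. Taking $T_\bullet$-fixed parts leaves precisely the weight-zero summands, namely those with $k=\ell$, and the $(\ell,\ell)$-piece
\[
\Hom(\V_1^\ell,\V_2^\ell)\to t\Hom(\V_1^\ell,\V_2^\ell)\oplus t^{-1}\Hom(\V_1^\ell,\V_2^\ell)\oplus\Hom(\W_1^\ell,\V_2^\ell)\oplus\Hom(\V_1^\ell,\W_2^\ell)\to\Hom(\V_1^\ell,\V_2^\ell),
\]
with maps induced by the restrictions $A_k|_{\V_k^\ell}$, $B_k|_{\V_k^\ell}$, $i_k|_{\W_k^\ell}$, $j_k|_{\V_k^\ell}$, is precisely $f_\ell^*$ of the complex defining $\K_{\cc_\ell,\dee_\ell}(1,\nn_\ell,\mm_\ell)$. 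Passing to cohomology then yields the stated decomposition.

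The main obstacle will be the global (not merely fiberwise) identification in the first step: I need to verify that the pointwise weight-space decomposition from Remark \ref{rmk:F} globalizes to an algebraic splitting of $\V_k$ and $\W_k$, and that the $\ell$-th summand really is pulled back through $f_\ell$. This is where the explicit construction of the isomorphism $\coprod_\nn\M_\cc(\nn)\xrightarrow{\cong}\M_\cc(s,n)^{T_\bullet}$ from Lemma \ref{lem:Tbulletfixedpoints} is essential: under that identification, the $\ell$-th weight space is literally the block $\C^{\nn_\ell}$, making the pullback identification $\V_k^\ell\cong f_k^*\V(\cc_\ell,1,\nn_\ell)$ (and its analogue for $\W$) transparent once the construction is unpacked. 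Everything afterward is a formal bigraded diagram chase.
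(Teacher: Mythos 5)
Your proposal is correct and matches the paper's approach: the paper's proof is the one-liner ``compute the $T_\bullet$-fixed points of Complex \eqref{eq:vbcomplex}'', and you are filling in exactly those details by $T_\bullet$-weight-decomposing the tautological bundles (hence the Hom-bundles and the maps $\zeta,\tau$), discarding the nonzero-weight subcomplexes, and recognizing the diagonal pieces as $f_\ell$-pullbacks of the $s=1$ complexes. One small citation note: the $T_\bullet$-weight-space decomposition of $\C^n$ you invoke lives in the paragraph surrounding \eqref{eq:Tbullet}--\eqref{eq:V^k} rather than in Remark~\ref{rmk:F} (which concerns the $\Z_m$-action on $\M_c(1,n)$), though the paper does flag the analogy there.
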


\begin{proof}
The result follows by computing the $T_\bullet$-fixed points of Complex \eqref{eq:vbcomplex} over $\M_{\cc,}(\nn)^{T_\bullet} \times \M_{\dee}(\mm)^{T_\bullet}$.
\end{proof}

From our diagram of inclusions, \eqref{eq:inclusions}, we have that
\begin{align*}
\textstyle{\M_{\cc}(\nn) \times \M_{\dee}(\mm)} &\textstyle{\supseteq \M_{\cc}(\nn)^{\Z_R} \times \M_{\dee}(\mm)^{\Z_R}}  \\
&\textstyle{\cong \coprod_{{|\vv^\ell|=\nn_\ell} \atop {|\uu^\ell|=\mm_\ell}} \MM_{\cc}(\vv^1,\dots, \vv^\ell) \times \MM_{\dee}(\uu^1,\dots,\uu^s).
}\end{align*}
We may thus view $\MM_{\cc}(\vv^1, \dots, \vv^s) \times \MM_{\dee}(\uu^1, \dots, \uu^s)$ as a subvariety of $\M_{\cc}(\nn) \times \M_{\dee}(\mm)$, and consider the restriction of $\K_{\cc,\dee}(\nn,\mm)$: 
\[\textstyle{ \KK_{\cc,\dee}(\vv^1,\uu^1,\dots,\vv^s,\uu^s) := \K_{\cc,\dee}(\nn,\mm)|_{\MM_{\cc}(\vv^1, \dots, \vv^s) \times \MM_{\dee}(\uu^1,\dots, \uu^s)},}\]
which is a vector bundle on $\MM_{\cc}(\vv^1,\dots,\vv^s) \times \MM_{\dee}(\uu^1,\dots,\uu^s)$. In the following section, the vector bundles $\K_{\cc,\dee}(\nn,\mm)$ and $\KK_{\cc,\dee}(\vv^1,\uu^1,\dots,\vv^s,\uu^s)$ will allows us to construct geometric versions of the Heisenberg algebra, the Clifford algebra and $\widehat\gl_r$.

%%%%%%%%%%%%%%%%%%%%%%%%%%%%%%%%%%%%%%%%%%%%%%%%%%%%%%%%%%%%%%%%%%%%%%%%%
%
% SECTION 4
%
%%%%%%%%%%%%%%%%%%%%%%%%%%%%%%%%%%%%%%%%%%%%%%%%%%%%%%%%%%%%%%%%%%%%%%%%%

\section{Geometric Realizations of $V_\text{basic}$}\label{section:4}

In this section, we present the main theorem of this paper, Theorem \ref{thm:main}, which describes our geometric realizations of the basic representation of $\widehat\gl_r$. We will do so by constructing the oscillator algebra, the Clifford algebra and $\widehat\gl_r$ as geometric operators (in the sense of \eqref{eq:operator}). Using the Localization Theorem, we will consider these geometric operators as operators on the same cohomology. We will show that these operators satisfy the same relations as those observed by their algebraic counterparts in Lemma \ref{lem:glaction}, and that the cohomology on which they act (or more specifically the full $\C$-lattice $\A$) corresponds naturally to fermionic Fock space. 

Throughout this section, for any $T_\star$-equivariant vector bundle $E$, we let $c_{\tnv}(E)$ denote the \emph{top nonvanishing} $T_\star$-equivariant Chern class of $E$. We will also frequently make use of the K\"unneth formula
\[\textstyle{ \HH_{T_\star}^*(X \times Y) \cong \HH_{T_\star}^*(X) \otimes \HH_{T_\star}^*(Y),}\]
and simply identify these two rings.

We begin by constructing the geometric version of the oscillator/Heisenberg algebra. Consider the vector bundle $\K_{\cc,\dee}(\nn,\mm)$ over $\M_{\cc}(\nn) \times \M_{\dee}(\mm)$. The rank of $\K_{\cc,\dee}(\nn,\mm)$ is $|\nn|+|\mm|$, which can be seen from the following lemma.

\begin{lemma}\label{lem:Euler}
Let $(\II,\JJ) \in \M_{\cc}(\nn)^{T_\star} \times \M_{\dee}(\mm)^{T_\star}$. The equivariant Euler class of the restriction of $\K_{\cc,\dee}(\nn,\mm)$ to $(\II,\JJ)$ is
\begin{align*}
\textstyle{ e_{T_\star}(\K_{\cc,\dee}(\nn,\mm)_{(\II,\JJ)}) = \prod_{\ell=1}^s} &\textstyle{\left(\prod_{x \in \lambda(\II_\ell)} (\dee_\ell - \cc_\ell - h_{\lambda(\II_\ell),\lambda(\JJ_\ell)}(x)) \frac{R}{r_\ell} \epsilon \right)} \\ & \quad \textstyle{ \left( \prod_{y \in \lambda(\JJ_\ell)} (\dee_\ell - \cc_\ell + h_{\lambda(\JJ_\ell),\lambda(\II_\ell)}(y)) \frac{R}{r_\ell} \epsilon \right).}
\end{align*}
\end{lemma}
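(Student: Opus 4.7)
The plan is to reduce the problem to the $s=1$ case and then carry out an explicit weight computation at the torus-fixed points.

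First, by Lemma \ref{lem:vbpullback}, the vector bundle $\K_{\cc,\dee}(\nn,\mm)$ splits as the direct sum of pullbacks $\bigoplus_{\ell=1}^s f_\ell^*\K_{\cc_\ell,\dee_\ell}(1,\nn_\ell,\mm_\ell)$. Since the equivariant Euler class is multiplicative on direct sums, evaluating at the fixed point $(\II,\JJ) = ((\II_1,\dots,\II_s),(\JJ_1,\dots,\JJ_s))$ gives
\[\textstyle{ e_{T_\star}(\K_{\cc,\dee}(\nn,\mm)_{(\II,\JJ)}) = \prod_{\ell=1}^s e_{T_\star}\bigl(\K_{\cc_\ell,\dee_\ell}(1,\nn_\ell,\mm_\ell)_{(\II_\ell,\JJ_\ell)}\bigr). }\]
Thus it suffices to compute each factor, i.e.\ to establish the $s=1$ case, keeping in mind that on each factor $T_\star$ acts through the embedding $(e,t) \mapsto (e_\ell, t^{R/r_\ell})$, so that the class $\epsilon$ is effectively rescaled to $(R/r_\ell)\epsilon$.

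Second, I would compute $e_{T_\star}(\K_{c,d}(1,n,m)_{(I,J)})$ using the three-term complex \eqref{eq:vbcomplex}. At a fixed point the complex becomes a short exact sequence of $T_\star$-modules (the outer map is injective and the inner map is surjective, by the same argument as in Lemma \ref{lem:tanspaceofqv}(1)), so
\[\textstyle{ e_{T_\star}(\K_{c,d}(1,n,m)_{(I,J)}) = \frac{e_{T_\star}\!\bigl(t\Hom(\V_1,\V_2)\oplus t^{-1}\Hom(\V_1,\V_2)\oplus \Hom(\W_1,\V_2)\oplus \Hom(\V_1,\W_2)\bigr)_{(I,J)}}{e_{T_\star}(\Hom(\V_1,\V_2))_{(I,J)}^{\,2}}. }\]
To evaluate this, I would use the standard weight decomposition of the fibres $\V_k|_{I}$ and $\V_k|_{J}$ at the torus-fixed points in $\M_{c}(1,n)$ and $\M_{d}(1,m)$ (as in \cite[Proposition 2.2]{savlic}): the fibre at a fixed point corresponding to a Young diagram $\lambda$ has $T$-weights $\{t^{2(j-i)}e : (i,j)\in\lambda\}$ (appropriately shifted by the charge), while $\W_k|_\bullet$ has weight $t^{\text{(charge)}}e$.

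Third, I would carry out the cancellation. Writing each $\Hom$ bundle as a virtual sum of line bundles and expanding the Euler class, the contribution from $\Hom(\V_1,\V_2)$ and its twist by $t^{\pm 1}$ yields, after massive cancellation, a product indexed by $\lambda(I) \sqcup \lambda(J)$ of factors of the form $\pm h_{\lambda(I),\lambda(J)}(x)\epsilon$ and $\pm h_{\lambda(J),\lambda(I)}(y)\epsilon$. The contributions from the $\W$-terms shift these by the charge difference $d-c$. Reinstating the scaling $\epsilon \leadsto (R/r_\ell)\epsilon$ on the $\ell$-th factor then gives the formula in the statement.

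The main obstacle is the third step: organising the weight bookkeeping into a hook-length product. This is entirely analogous to Nakajima's classical computation of the tangent weights at fixed points of the Hilbert scheme, and the corresponding computation was already carried out in the homogeneous (one-colour) setting in \cite[Proposition 3.3]{savlic}. The new content here is just tracking the scaling $R/r_\ell$ for each colour and assembling the product over $\ell$, both of which are straightforward once the $s=1$ case is in hand.
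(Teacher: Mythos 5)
Your approach is essentially the paper's proof: split $\K_{\cc,\dee}(\nn,\mm)$ via Lemma \ref{lem:vbpullback}, use multiplicativity of the equivariant Euler class to reduce to the one-colour case, and then invoke the computation of \cite{savlic} with $\epsilon$ rescaled to $(R/r_\ell)\epsilon$ on the $\ell$-th factor, which is exactly what the paper does (it cites \cite[Lemma 3.3]{savlic} with $r=1$ rather than redoing the weight bookkeeping). Your additional sketch of the $s=1$ computation is fine in spirit, though the intermediate identity $e_{T_\star}(\K_{(\II,\JJ)}) = e_{T_\star}(\mathrm{middle})/e_{T_\star}(\Hom(\V_1,\V_2)_{(\II,\JJ)})^2$ would need care when $\Hom(\V_1,\V_2)_{(\II,\JJ)}$ contains trivial weights (a $0/0$ expression); since you ultimately defer to the one-colour result of \cite{savlic}, this does not affect correctness.
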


\begin{proof}
By Lemma \ref{lem:vbpullback},
\[\textstyle{ \K_{\cc,\dee}(\nn,\mm) \cong \bigoplus_{\ell=1}^s f_\ell^*(\K_{\cc_\ell,\dee_\ell}(1,\nn_\ell,\mm_\ell)),}\]
and so, by properties of the Euler class,
\[\textstyle{ e_{T_\star}(\K_{\cc,\dee}(\nn,\mm)) = \prod_{\ell=1}^s f_\ell^*(e_{T_\star}(\K_{\cc_\ell,\dee_\ell}(1,\nn_\ell,\mm_\ell)) = \prod_{\ell=1}^s \left( 1^{\otimes \ell-1} \otimes e_{T_\star}(\K_{\cc_\ell,\dee_\ell}(1, \nn_\ell,\mm_\ell)) \otimes 1^{\otimes s-\ell} \right).}\]
Now, by setting $r=1$ and replacing $\epsilon$ by $(R/r_\ell)\epsilon$ in \cite[Lemma 3.3]{savlic}, we have that
\begin{align*}
\textstyle{ e_{T_\star}(\K_{\cc_\ell,\dee_\ell}(1,\nn_\ell,\mm_\ell)_{(\II_\ell,\JJ_\ell)}) =} & \textstyle{\left(\prod_{x \in \lambda(\II_\ell)} (\dee_\ell - \cc_\ell - h_{\II_\ell,\JJ_\ell}(x)) \frac{R}{r_\ell} \epsilon \right)} \\ &\quad \textstyle{ \left( \prod_{y \in \lambda(\JJ_\ell)} (\dee_\ell - \cc_\ell + h_{\JJ_\ell,\II_\ell}(y)) \frac{R}{r_\ell} \epsilon \right).} \qedhere
\end{align*}
\end{proof}

We recall from \cite[Section 3.3]{savlic}, that the top nonvanishing Chern class of $\K_{c,c}(1,n,m)$ is
\begin{equation}\textstyle{\label{eq:HeisenChern}
c_{\tnv}(\K_{c,c}(1,n,m)) = \begin{cases} c_{2n}(\K_{c,c}(1,n,n)), & \text{if } n=m, \\ c_{n+m-1}(\K_{c,c}(1,n,m)) & \text{if } n \ne m. \end{cases}
}\end{equation}
By Lemma \ref{lem:vbpullback}, the top nonvanishing Chern class of $\K_{\cc,\cc}(\nn,\mm)$ may be computed by
\begin{align*}
\textstyle{c_{\tnv}(\K_{\cc,\cc}(\nn,\mm))} &\textstyle{= c_{\tnv}\left( \bigoplus_{\ell=1}^s f_\ell^*(\K_{\cc_\ell,\cc_\ell}(1,\nn_\ell,\mm_\ell))\right) = \prod_{\ell=1}^s f_\ell^*(c_{\tnv}(\K_{\cc_\ell, \cc_\ell}(1,\nn_\ell,\mm_\ell)))} \\
&\textstyle{= \prod_{\ell=1}^s \left(1^{\otimes \ell-1} \otimes c_{\tnv}(\K_{\cc_\ell,\cc_\ell}(1,\nn_\ell,\mm_\ell)) \otimes 1^{\otimes s-\ell}\right).
}\end{align*}
Therefore, if $\nn=\mm$,
\[\textstyle{ c_{\tnv}(\K_{\cc,\cc}(\nn,\nn)) = c_{2|\nn|}(\K_{\cc,\cc}(\nn,\nn)), }\]
whereas if $\nn$ differs from $\mm$ in exactly one component,
\[\textstyle{ c_{\tnv}(\K_{\cc,\cc}(\nn,\mm)) = c_{|\nn|+|\mm|-1}(\K_{\cc,\cc}(\nn,\mm)). }\]

Define $\alpha \in \HH_{T_\star}(\M_{\cc}(\nn)^{T_{\star}} \times \M_{\dee}(\mm)^{T_\star}) = \bigoplus_{(\II,\JJ)} \HH_{T_\star}(\pt)$ to be the element with $(\II,\JJ)$-th component
\[\textstyle{ \alpha_{(\II,\JJ)} = \frac{\epsilon}{e_{T_\star}(\T_{(\II,\JJ)})},}\]
where $\T_{(\II,\JJ)}$ is the tangent space of $(\II,\JJ)$ in $\M_{\cc}(\nn) \times \M_{\dee}(\mm)$. Let $\gamma := i_*(\alpha)$, where $i: \M_{\cc}(\nn)^{T_\star} \times \M_{\dee}(\mm)^{T_\star} \hookrightarrow \M_{\cc}(\nn) \times \M_{\dee}(\mm)$ is the natural inclusion. If we denote by $i_{\II,\JJ}$ the inclusion $\{(\II,\JJ)\} \hookrightarrow \M_{\cc}(\nn) \times \M_{\dee}(\mm)$, then by \cite[Equation (9.3)]{cdks},
\[\textstyle{ i^*_{\II,\JJ} (\gamma) = (i^*_{\II,\JJ} \circ i_*) (\alpha) = e_{T_\star}(\T_{(\II,\JJ)}) \cup \alpha_{(\II,\JJ)} = \epsilon.}\]
Note that $\gamma$ and $\epsilon$ are elements of degree $2$ in their respective cohomologies.

\begin{defn}[Geometric oscillator/Heisenberg operators]\label{defn:geoHeisenberg}
For $\ell=1,\dots,s$ and $k \in \Z$, define operators
\[\textstyle{ \geoP_\ell(k): \bigoplus_{\nn,\cc} \HH_{T_\star}^{2|\nn|}(\M_{\cc}(\nn)) \to \bigoplus_{\nn,\cc} \HH_{T_\star}^{2|\nn|}(\M_{\cc}(\nn)), }\]
by
\[\textstyle{ \geoP_\ell(k)|_{\HH_{T_\star}^{2|\nn|}(\M_{\cc}(\nn))} = \begin{cases} (R/r_\ell)\gamma \cup c_{\tnv}(\K_{\cc,\cc}(\nn,\nn-k\1_\ell)), & \text{if } k<0, \\ - (R/r_\ell)\gamma \cup c_{\tnv}(\K_{\cc,\cc}(\nn,\nn-k\1_\ell)), & \text{if } k>0,\end{cases}}\]
\[\textstyle{ \qquad \qquad \in \HH_T^{2(2|\nn|-k)}(\M_{\cc}(\nn) \times \M_{\cc}(\nn-k\1_\ell))}\]

\smallskip

\[\textstyle{ \geoP_\ell(0)|_{\HH_{T_\star}^{2|\nn|}(\M_{\cc}(\nn))} = \cc_\ell \cdot c_{\tnv}(\K_{c,c}(\nn,\nn)) = \cc_\ell \id.}\]
The $\geoP_\ell(k)$ will be called \emph{geometric oscillators} (or, for $k\ne 0$, \emph{geometric Heisenberg operators}). 
\end{defn}

\begin{thm}\label{thm:geoHeisenberg}
The operators $\geoP_\ell(k)$ preserve the space $\A$ and satisfy the commutation relations
\[\textstyle{ [\geoP_\ell(k), \geoP_m(0)]=0, \quad \text{and} \quad [\geoP_\ell(k),\geoP_m(j)] = \frac{1}{k} \delta_{\ell, m} \delta_{k+j,0} \id, \; k \ne 0. }\]
In particular, the mapping
\[\textstyle{ P_\ell(k) \mapsto \geoP_\ell(k),}\]
defines a representation of the $s$-coloured oscillator algebra on $\A$ and the linear map determined by
\[\textstyle{ [\II] \mapsto \left(s_{\lambda(\II_1)} \otimes q^{c(\II_1)} \right) \otimes \cdots \otimes \left(s_{\lambda(\II_s)} \otimes q^{c(\II_s)} \right), }\]
is an isomorphism of $s$-coloured oscillator algebra representations $\A \to \BB$. This isomorphism maps $\A(c) \to \BB(c)$, for all $c \in \Z$.
\end{thm}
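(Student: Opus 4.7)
The plan is to reduce everything to the $1$-coloured case, which is proved in \cite[Section 3]{savlic}. The two key reduction tools are Lemma \ref{lem:vbpullback}, which decomposes $\K_{\cc,\cc}(\nn,\mm)$ as a direct sum indexed by colours, and Remark \ref{rmk:AviaKunneth}, which decomposes $\A$ via the K\"unneth formula as $\A_{\cc}(\nn) \cong \A_{\cc_1}(\nn_1) \otimes \cdots \otimes \A_{\cc_s}(\nn_s)$. A further simplification is that the tangent space splitting $\T_{\II}^{\pm} = \bigoplus_\ell \T_{\II_\ell}^{\pm}$ makes the Euler classes in the normalized basis elements $[\II]$ factor cleanly over colours.

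First I would show that, under the K\"unneth identification, the operator $\geoP_\ell(k)$ acts as $1^{\otimes \ell-1} \otimes \widetilde{\geoP}_\ell(k) \otimes 1^{\otimes s-\ell}$, where $\widetilde{\geoP}_\ell(k)$ is a $1$-coloured geometric Heisenberg operator on $\A_{\cc_\ell}(\nn_\ell)$. To establish this, use Lemma \ref{lem:operator} to expand the matrix coefficient $\langle \geoP_\ell(k)[\II], [\JJ] \rangle$; the Chern class decomposes via Lemma \ref{lem:vbpullback} as a product over colours, and for each $\ell' \neq \ell$ the corresponding factor is the top nonvanishing (i.e., the Euler) class of the $\ell'$-th summand (by \eqref{eq:HeisenChern}, since $\mm_{\ell'} = \nn_{\ell'}$). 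That Euler factor, evaluated at the fixed point and combined with $e_{T_\star}(\T_{\II_{\ell'}}^\pm)$ from the definition of $[\II]$, forces $\JJ_{\ell'} = \II_{\ell'}$ and produces the identity on the $\ell'$-th tensor slot (this is essentially the orthonormality computation of Lemma \ref{lem:orthonormal}). On the $\ell$-th slot, what remains is the $1$-coloured construction of \cite{savlic} with $\epsilon$ replaced by $(R/r_\ell)\epsilon$.

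Next I would verify that the prefactor $R/r_\ell$ built into Definition \ref{defn:geoHeisenberg} is precisely the rescaling required so that $\widetilde{\geoP}_\ell(k)$ satisfies the same commutation relation $[\widetilde{\geoP}_\ell(k),\widetilde{\geoP}_\ell(j)] = \frac{1}{k}\delta_{k+j,0}\id$ as in the $1$-coloured setting (the homogeneity of $\gamma$ and of the Euler classes in $\epsilon$ must be weighed against the degree-$|\nn|$ denominator in the bilinear form, and the scalar powers of $R/r_\ell$ balance out). For $\ell \neq m$, the operators act on disjoint tensor factors so $[\geoP_\ell(k),\geoP_m(j)] = 0$, and $\geoP_\ell(0) = \cc_\ell\id$ commutes with everything by definition. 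Preservation of $\A$ is inherited from the $1$-coloured case, tensor factor by tensor factor. The isomorphism $\A \to \BB$ is then the tensor product of the $1$-coloured isomorphisms $\A_{\cc_\ell}(\nn_\ell) \to B_{\cc_\ell}$ from \cite{savlic} sending $[\II_\ell] \mapsto s_{\lambda(\II_\ell)} \otimes q^{c(\II_\ell)}$; it intertwines the oscillator actions by the slot-wise computation above and maps $\A(c)$ to $\BB(c)$ because $|\cc| = \cc_1 + \cdots + \cc_s$.

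The main obstacle is the verification that $\geoP_\ell(k)$ genuinely ``factors through'' the K\"unneth decomposition. Once the slot-wise structure of the matrix coefficients is pinned down, all commutation relations, the preservation of $\A$, and the isomorphism to $\BB$ reduce to the corresponding statements in \cite{savlic} applied in the $\ell$-th slot with rescaled equivariant parameter; care is needed only in tracking the homogeneity of $\gamma$, $e_{T_\star}(\T_{\II}^\pm)$, and the chosen normalization $R/r_\ell$ so that the final commutator has coefficient exactly $1/k$ rather than some multiple.
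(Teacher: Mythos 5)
Your proposal is correct and takes essentially the same approach as the paper: reduce to the $1$-coloured case via Lemma \ref{lem:vbpullback} together with the K\"unneth decomposition of Remark \ref{rmk:AviaKunneth}, observe that the non-$\ell$ tensor slots act as the identity (the paper does this by citing \cite[Lemma 3.10]{savlic} directly, whereas you derive it from the orthonormality/Euler-class computation, which is the content of that lemma), and then invoke \cite[Theorem 3.14]{savlic} on the $\ell$-th slot with $\epsilon$ rescaled to $(R/r_\ell)\epsilon$. The paper's proof is more terse but follows exactly this outline.
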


\begin{proof}
Take $\mm = \nn - k \1_\ell$. We know that
\[\textstyle{ c_{\tnv}(\K_{\cc,\cc}(\nn,\mm)) = \prod_{i=1}^s \left(1^{\otimes i-1} \otimes c_{\tnv}(\K_{\cc_i,\cc_i}(1,\nn_i,\nn_i)) \otimes 1^{\otimes s-i}\right). }\]
By \cite[Lemma 3.10]{savlic}, for $i \ne \ell$, we have that $c_{\tnv}(\K_{\cc_i,\cc_i}(1,\nn_i, \nn_i))$ is the identity as an operator $\HH_{T_\star}^{2\nn_i}(\M_{\cc_i}(1,\nn_i)) \to \HH_{T_\star}^{2\nn_i}(\M_{\cc_i}(1,\nn_i))$, i.e.\ $c_{\tnv}(\K_{\cc_i, \cc_i}(1,\nn_i,\mm_i)) = 1$. Thus,
\[\textstyle{ c_{\tnv}(\K_{\cc,\cc}(\nn,\mm)) = 1^{\otimes \ell-1} \otimes c_{\tnv}(\K_{\cc_\ell,\cc_\ell}(1,\nn_\ell, \nn_\ell - k)) \otimes 1^{\otimes s-\ell}.}\]
Therefore, via the K\"unneth formula, 
\[\textstyle{ \bigoplus_{\nn,\cc} \HH_{T_\star}^{2|\nn|}(\M_{\cc}(\nn)) \cong \bigoplus_{\nn,\cc} \HH_{T_\star}^{2\nn_1}(\M_{\cc_1}(1,\nn_1)) \otimes \cdots \otimes \HH_{T_\star}^{2\nn_s}(\M_{\cc_s}(1,\nn_s)),}\]
we have
\[\textstyle{ \geoP_\ell(k)|_{\HH_{T_\star}^{2|\nn|}(\M_{\cc}(\nn))} = 1^{\otimes(\ell-1)} \otimes (R/r_\ell) \gamma \cup c_{\tnv}(\K_{\cc_\ell,\cc_\ell}(1,\nn_\ell,\nn_\ell-k)) \otimes 1^{\otimes(s-\ell)}.}\]
By \cite[Theorem 3.14]{savlic}, for a fixed $\ell \in \{1,\dots,s\}$, the operators $\geoP_\ell(k)$ preserve the space $\bigoplus_{\nn_\ell,\cc_\ell} \A_{\cc_\ell}(\nn_\ell)$ (see Remark \ref{rmk:AviaKunneth}) and satisfy the $1$-coloured oscillator algebra relations. The general result then follows by extension to the whole tensor product.
\end{proof}

For the geometric version of the Clifford algebra, we recall from \cite[Section 3.2]{savlic} that
\begin{equation}\textstyle{\label{eq:CliffChern}
c_{\tnv}(\K_{c,c\pm 1}(1,n,m)) = c_{n+m}(\K_{c,c \pm 1}(1,n,m)).
}\end{equation}
Therefore, by equations \eqref{eq:HeisenChern} and \eqref{eq:CliffChern}, if $\nn_i = \mm_i$ for all $i \ne \ell$, then
\[\textstyle{ c_{\tnv}(\K_{\cc,\cc \pm \1_\ell}(\nn,\mm)) = c_{|\nn|+|\mm|}(\K_{\cc,\cc \pm \1_\ell}(\nn,\mm)).}\]

\begin{defn}[Geometric Clifford operators]
For $\ell=1,\dots,s$ and $k \in \Z$, define operators
\[\textstyle{ \geoPsi_\ell(k), \geoPsi_\ell^*(k): \bigoplus_{\nn,\cc} \HH_{T_\star}^{2|\nn|}(\M_{\cc}(\nn)) \rightarrow \bigoplus_{\nn,\cc} \HH_{T_\star}^{2|\nn|}(\M_{\cc}(\nn)),}\]
by
\begin{align*}
\textstyle{\geoPsi_\ell(k)|_{\HH_{T_\star}^{2|\nn|}(\M_{\cc}(\nn))}} &\textstyle{:= (-1)^{\cc_1 + \dots + \cc_{\ell-1}} c_{\tnv}(\K_{\cc,\cc+\1_\ell}(\nn,\nn+(k-\cc_\ell-1) \1_\ell))} \\
& \textstyle{\in \HH_{T_\star}^{2(2|\nn|+k-\cc_\ell-1)}(\M_{\cc}(\nn) \times \M_{\cc + \1_\ell}(\nn + (k-\cc_\ell-1)\1_\ell)),}
\end{align*}
\begin{align*}
\textstyle{\geoPsi^*_\ell(k)|_{\HH_{T_\star}^{2|\nn|}(\M_{\cc}(\nn))}} &\textstyle{:= (-1)^{\cc_1 + \dots + \cc_{\ell-1}}c_{\tnv}(\K_{\cc,\cc-\1_\ell}(\nn,\nn-(k-\cc_\ell) \1_\ell))} \\
& \textstyle{\in \HH_{T_\star}^{2(2|\nn|-k+\cc_\ell)}(\M_{\cc}(\nn) \times \M_{\cc - \1_\ell}(\nn - (k-\cc_\ell)\1_\ell)).}
\end{align*}
The $\geoPsi_\ell(k)$ and $\geoPsi^*_\ell(k)$ will be called \emph{geometric Clifford operators}.
\end{defn}

\begin{thm}\label{thm:geoClifford}
The operators $\geoPsi_\ell(k)$ and $\geoPsi_\ell^*(k)$ preserve the space $\A$ and satisfy the relations
\[\textstyle{ \{ \geoPsi_\ell(k), \geoPsi^*_j(i) \} = \delta_{ki} \delta_{\ell j}, \quad \{\geoPsi_\ell(k), \geoPsi_j(i)\} = \{ \geoPsi^*_\ell(k), \geoPsi^*_j(i) \} = 0.}\]
In particular, the mapping
\[\textstyle{ \psi_\ell(k) \mapsto \geoPsi_\ell(k), \quad \psi_\ell^*(k) \mapsto \geoPsi^*_\ell(k),}\]
defines a representation of the $s$-coloured Clifford algebra on $\A$ and the linear map determined by $[\II] \mapsto \II,$ is an isomorphism of Clifford algebra representations $\A \to \FF$. This isomorphism maps $\A(c) \to \FF(c)$, for all $c \in \Z$.
\end{thm}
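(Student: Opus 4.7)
The plan is to mirror the approach of Theorem \ref{thm:geoHeisenberg} by reducing every anticommutation relation to the 1-coloured case already established in \cite{savlic}, with the prefactors $(-1)^{\cc_1+\dots+\cc_{\ell-1}}$ producing the anticommutation between distinct colours. First I would use Lemma \ref{lem:vbpullback} and the K\"unneth formula to decompose each geometric Clifford operator as a tensor product. Since $\geoPsi_\ell(k)$ sends $\M_{\cc}(\nn)$ to $\M_{\cc+\1_\ell}(\nn+(k-\cc_\ell-1)\1_\ell)$, only the $\ell$-th component of $\nn$ changes; combining this with Equation \eqref{eq:CliffChern} and the identity $c_{\tnv}(\K_{\cc_i,\cc_i}(1,\nn_i,\nn_i))=1$ from \cite[Lemma 3.10]{savlic} shows that
\[
c_{\tnv}(\K_{\cc,\cc+\1_\ell}(\nn,\mm)) = 1^{\otimes \ell-1}\otimes c_{\tnv}(\K_{\cc_\ell,\cc_\ell+1}(1,\nn_\ell,\nn_\ell+k-\cc_\ell-1))\otimes 1^{\otimes s-\ell},
\]
and similarly for $\geoPsi_\ell^*(k)$. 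Thus each geometric Clifford operator acts, up to the explicit sign, as a 1-coloured Clifford operator on the $\ell$-th tensor factor while doing nothing to the other factors, which already gives preservation of $\A$.

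Next I would split the anticommutators into two cases. When $\ell=j$, the sign prefactors agree on the two sides of each bracket and cancel, so the relation collapses to the corresponding single-coloured Clifford statement, which is precisely the analogue of \cite[Theorem 3.14]{savlic} for the Clifford section of that paper. When $\ell\ne j$, the underlying 1-coloured operators act on disjoint tensor factors and therefore commute as linear maps, so the required anticommutation must come entirely from the signs. For $\ell<j$, applying $\geoPsi_\ell(k)$ then $\geoPsi_j(i)$ accrues $(-1)^{\cc_1+\dots+\cc_{\ell-1}}\cdot(-1)^{\cc_1+\dots+\cc_{j-1}+1}$, where the $+1$ in the second exponent appears because the intermediate charge is $\cc+\1_\ell$. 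Applying them in the reverse order gives $(-1)^{\cc_1+\dots+\cc_{j-1}}\cdot(-1)^{\cc_1+\dots+\cc_{\ell-1}}$, since $\ell<j$ means the shift $\cc+\1_j$ does not affect the second prefactor. These differ by exactly one minus sign, producing the vanishing anticommutator. Identical bookkeeping handles $\{\geoPsi_\ell,\geoPsi_j^*\}$ and $\{\geoPsi_\ell^*,\geoPsi_j^*\}$ for $\ell\ne j$.

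For the isomorphism statement, I would use the K\"unneth factorisation $\A_{\cc}(\nn)\cong \A_{\cc_1}(\nn_1)\otimes\cdots\otimes \A_{\cc_s}(\nn_s)$ from Remark \ref{rmk:AviaKunneth} together with the matching factorisation $\FF_{\cc}\cong F_{\cc_1}\otimes\cdots\otimes F_{\cc_s}$. The 1-coloured result in \cite{savlic} yields an isomorphism $\A_{\cc_\ell}(\nn_\ell)\to F_{\cc_\ell}$ sending $[\II_\ell]\mapsto \II_\ell$, and tensoring these produces the stated map $[\II]\mapsto \II$. Because the sign prefactor $(-1)^{\cc_1+\dots+\cc_{\ell-1}}$ in the definition of $\geoPsi_\ell$ is identical to the prefactor appearing in the $\Cl$-action on $\FF$ laid out in Section \ref{section:1}, this map intertwines the two representations; the grading claim $\A(c)\to\FF(c)$ is then immediate from the definitions of both sides in terms of total charge.

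The main obstacle will be the sign bookkeeping in the mixed-colour anticommutators: there are four bracket flavours ($\geoPsi\geoPsi$, $\geoPsi^*\geoPsi^*$, and the two $\geoPsi\geoPsi^*$ cases), and for each the intermediate charge shifts differently, so one must verify that the exponents produce exactly one net minus sign in every configuration. Once this is checked carefully, the rest of the proof is a routine K\"unneth reduction to the 1-coloured results of \cite{savlic}.
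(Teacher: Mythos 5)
Your proposal follows the same strategy as the paper's proof: reduce via Lemma~\ref{lem:vbpullback} and the K\"unneth formula to the single-colour case, invoke the corresponding Clifford result of Licata--Savage (the paper cites \cite[Theorem 3.6]{savlic} rather than Theorem~3.14, but your gesture at the right parallel result is clear), and then extend to the full tensor product. The paper's proof is terser, compressing the mixed-colour sign analysis into ``the general result then follows by extension to the whole tensor product,'' whereas you spell out the sign bookkeeping explicitly; your calculation is correct, so this is simply a more detailed rendering of the same argument.
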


\begin{proof}
Using an argument analogous to the proof of Theorem \ref{thm:geoHeisenberg}, one can show that
\[\textstyle{ \geoPsi_\ell(k)|_{\HH_{T_\star}(\M_{\cc}(\nn))} = (-1)^{\cc_1 + \dots + \cc_{\ell-1}} (1^{\otimes \ell - 1} \otimes c_{\tnv}(\K_{\cc_\ell,\cc_\ell + 1}(1,\nn_\ell,\nn_\ell + k - \cc_\ell - 1)) \otimes 1^{\otimes s - \ell}),}\]
\[\textstyle{ \geoPsi^*_\ell(k)|_{\HH_{T_\star}(\M_{\cc}(\nn))} = (-1)^{\cc_1 + \dots + \cc_{\ell-1}} (1^{\otimes \ell - 1} \otimes c_{\tnv}(\K_{\cc_\ell,\cc_\ell - 1}(1, \nn_\ell, \nn_\ell - k + \cc_\ell)) \otimes 1^{\otimes s - \ell}).}\]
By \cite[Theorem 3.6]{savlic}, for fixed $\ell$, the operators $(-1)^{\cc_1 + \dots + \cc_{\ell-1}} c_{\tnv}(\K_{\cc_\ell,\cc_\ell + 1}(1,\nn_\ell,\nn_\ell + k - \cc_\ell - 1))$ and $(-1)^{\cc_1 + \dots + \cc_{\ell-1}} c_{\tnv}(\K_{\cc_\ell,\cc_\ell - 1}(1, \nn_\ell, \nn_\ell - k + \cc_\ell))$ preserve the space $\bigoplus_{\nn_\ell, \cc_\ell} \A(\nn_\ell, \cc_\ell)$ (see Remark \ref{rmk:AviaKunneth}) and satisfy the $1$-coloured Clifford algebra relations. The general result then follows by extension to the whole tensor product.
\end{proof}

Recall from Diagram \ref{eq:inclusions} that, since
\[\textstyle{ \M_{\cc}(\nn)^{\Z_R} \cong \coprod_{|\vv^\ell|=n_\ell} \MM_{\cc}(\vv^1,\dots, \vv^s), }\]
we may view $\MM_{\cc}(\vv^1, \dots, \vv^s) \times \MM_{\dee}(\uu^1, \dots, \uu^s)$ as a subvariety of $\M_{\cc}(\nn) \times \M_{\dee}(\mm)$, where $\nn_\ell = |\vv^\ell|$ and $\mm_\ell = |\uu^\ell|$. We therefore have an action of $T_\star$ on $\M_{\cc}(\vv^1,\dots, \vv^s)$ and
\[\textstyle{ \coprod_{\vv^\ell} \MM_{\cc}(\vv^1,\dots,\vv^s)^{T_\star} = \coprod_{\nn} \M_{\cc}(\nn)^{T_\star}.}\]
Let
\[\textstyle{ \KK_{\cc}(\nn; \ell, k)^{\pm} := \coprod_{|\vv^i|=\nn_i} \KK_{\cc,\cc}(\vv^1,\vv^1,\dots,\vv^\ell,\vv^\ell \pm \1_k, \dots, \vv^s,\vv^s),}\]
which is a vector bundle on $\coprod_{|\vv^i| = \nn_i}\MM_{\cc}(\vv^1,\dots,\vv^s) \times \MM_{\cc}(\vv^1,\dots,\vv^\ell \pm \1_k,\dots,\vv^s)$. By construction, for any $(\II,\JJ) \in \MM_{\cc}(\vv^1,\dots,\vv^s)^{T_\star} \times \MM_{\cc}(\vv^1, \dots, \vv^\ell \pm \1_k, \dots, \vv^\ell)^{T_\star}$,
\[\textstyle{ c_{\tnv} (\KK_{\cc}(\nn; \ell,k)^{\pm}_{(\II,\JJ)}) = c_{\tnv} (\K_{\cc,\cc}(\nn,\nn \pm \1_\ell)_{(\II,\JJ)}).}\]
Thus,
\[\textstyle{ c_{\tnv}(\KK_{\cc}(\nn; \ell,k)^{\pm}) = c_{(2|\nn| \pm 1)-1}(\KK_{\cc}(\nn; \ell, k)^{\pm}).}\]
By the same reasoning, for $(\II,\JJ) \in \MM_{\cc}(\vv^1,\dots,\vv^s)^{T_\star} \times \MM_{\cc}(\vv^1,\dots,\vv^s)^{T_\star}$,
\[\textstyle{ c_{\tnv}(\KK_{\cc,\cc}(\vv^1,\vv^1,\dots,\vv^s,\vv^s)_{(\II,\JJ)}) = c_{\tnv}(\K_{\cc,\cc}(\nn,\nn)_{(\II,\JJ)}),}\]
and so
\[\textstyle{ c_{\tnv}(\KK_{\cc,\cc}(\vv^1,\vv^1,\dots,\vv^s,\vv^s)) = c_{2|\nn|}(\KK_{\cc,\cc}(\vv^1,\vv^1,\dots,\vv^s,\vv^s)).}\]

By the Localization Theorem \ref{thm:localization}, we have an isomorphism
\[\textstyle{ \HH_{T_\star}^*(\MM_{\cc}(\vv^1,\dots,\vv^s) \times \MM_{\cc}(\uu^1,\dots,\uu^s)) \rightarrow \HH_{T_\star}^*(\MM_{\cc}(\vv^1,\dots, \vv^s) \times \MM_{\cc}(\uu^1,\dots,\uu^s)) = \bigoplus_{(\II,\JJ)} \HH_{T_\star}^*(\pt),}\]
where the direct sum runs over all $(\II,\JJ) \in \MM_{\cc}(\vv^1,\dots,\vv^s)^{T_\star} \times \MM_{\cc}(\uu^1,\dots,\uu^s)^{T_\star}$. Let $\xi \in \bigoplus_{(\II,\JJ)} \HH_{T_\star}^*(\pt)$ be the element with $(\II,\JJ)$-th component
\[\textstyle{ \xi_{(\II,\JJ)} = \frac{\epsilon}{e_{T_\star}(\T'_{(\II,\JJ)})},}\]
where $\T'_{(\II,\JJ)}$ is the tangent space of $(\II,\JJ)$ in $\MM_{\cc}(\vv^1,\dots,\vv^s) \times \MM_{\cc}(\uu^1,\dots,\uu^s)$. Let $\beta := j_*(\xi)$, where $j: \MM_{\cc}(\vv^1, \dots, \vv^s)^{T_\star} \times \MM_{\cc}(\uu^1,\dots,\uu^s)^{T_\star} \hookrightarrow \MM_{\cc}(\vv^1,\dots,\vv^s) \times \MM_{\cc}(\uu^1, \dots, \uu^s)$ is the natural inclusion. Denote by $j_{\II,\JJ}$ the inclusion $\{(\II,\JJ)\} \hookrightarrow \MM_{\cc}(\vv^1,\dots, \vv^s) \times \MM_{\dee}(\uu^1,\dots,\uu^s)$. Then, by \cite[Equation (9.3)]{cdks},
\[\textstyle{ j_{\II,\JJ}^*(\beta) = (j_{\II,\JJ}^* \circ j_\star)(\xi) = e_{T_\star}(\T'_{(\II,\JJ)}) \cup \xi_{(\II,\JJ)} = \epsilon.}\]
For $\ell=1,\dots, s$ and $k = 0, \dots, r_\ell$, we define
\begin{align*}
\textstyle{\frakE_k^\ell(\cc,\nn)} & \textstyle{:= -(R/r_\ell) \beta \cup c_{\tnv}(\KK_{\cc}(\nn; \ell, k)^-)} \\
& \textstyle{\in \HH_{T_\star}^{4|\nn|-2}\left(\coprod_{|\vv^i|=\nn_i} \MM_{\cc}(\vv^1,\dots,\vv^s) \times \MM_{\cc}(\vv^1,\dots,\vv^\ell - \1_k, \dots, \vv^s)\right),}
\end{align*}
\begin{align*}
\textstyle{\frakF_k^\ell(\cc, \nn)} & \textstyle{:= (R/r_\ell) \beta \cup c_{\tnv}(\KK_{\cc}(\nn; \ell, k)^+)} \\
& \textstyle{\in \HH_{T_\star}^{4|\nn| + 2}\left( \coprod_{|\vv^i|=\nn_i} \MM_{\cc}(\vv^1,\dots,\vv^s) \times \MM_{\cc}(\vv^1, \dots, \vv^\ell + \1_k, \dots, \vv^s)\right),}
\end{align*}
\begin{align*}
\textstyle{\frakH_k^\ell(\cc;\vv^1,\dots,\vv^s)} &\textstyle{:= \left( (\1_{\bar{c}})_k - \sum_{j=0}^{r_\ell-1} a^\ell_{kj} \vv^\ell_j\right) c_{\tnv}(\KK_{\cc,\cc}(\vv^1,\vv^1,\dots,\vv^s,\vv^s))} \\
&\textstyle{\in \HH_{T_\star}^{4(|\vv^1|+\cdots+|\vv^s|)}(\MM_{\cc}(\vv^1,\dots,\vv^s) \times \MM_{\cc}(\vv^1,\dots,\vv^s)),}
\end{align*}
where $a_{kj}^\ell$ is the $(k,j)$-th entry of the generalized Cartan matrix of type $\widehat{A}_{r_\ell-1}$. Note that, a priori, the elements $\frakE_k^\ell$, $\frakF_k^\ell$ and $\frakH_k^\ell$ do not define geometric operators since they do not lie in the middle degree cohomology of their respective quiver varieties. However, we can push these elements forward to the middle degree cohomology of the appropriate moduli spaces. For what follows, it will be convenient to view $\HH_{T_\star}^*(\MM_{\cc}(\vv^1,\dots,\vv^s) \times \MM_{\dee}(\uu^1,\dots,\uu^s))$ as a subset of $\HH_{T_\star}^*(\M_{\cc}(\nn)^{\Z_R} \times \M_{\dee}(\mm)^{\Z_R})$. That is, by identifying
\begin{align*}
\textstyle{\HH_{T_\star}^*(\M_{\cc}(\nn)^{\Z_R} \times \M_{\dee}(\mm)^{\Z_R})} &\textstyle{\cong \HH_{T_\star}^*\left( \coprod_{ {|\vv^i| = \nn_i} \atop {|\uu^i| = \mm_i}} \MM_{\cc}(\vv^1,\dots,\vv^s) \times \MM_{\dee}(\uu^1,\dots,\uu^s)\right)} \\
& \textstyle{\cong \bigoplus_{ {|\vv^i| = \nn_i} \atop {|\uu^i| = \mm_i}} \HH_{T_\star}^*(\MM_{\cc}(\vv^1,\dots,\vv^s) \times \MM_{\dee}(\uu^1,\dots,\uu^s)),}
\end{align*}
we will identify $\HH_{T_\star}^*(\MM_{\cc}(\vv^1,\dots,\vv^s) \times \MM_{\dee}(\uu^1,\dots,\uu^s))$ with its image under the inclusion
\[\textstyle{ \HH_{T_\star}^*\left(\MM_{\cc}(\vv^1,\dots,\vv^s) \times \MM_{\dee}(\uu^1,\dots,\uu^s)\right) \hookrightarrow \HH_{T_\star}^*(\M_{\cc}(\nn)^{\Z_R} \times \M_{\dee}(\mm)^{\Z_R}).}\]
In this way, we will consider
\[\textstyle{ \frakE_k^\ell(\cc,\nn) \in \HH_{T_\star}^*(\M_{\cc}(\nn)^{\Z_R} \times \M_{\cc}(\nn - \1_\ell)^{\Z_R}), \quad \frakF_k^\ell(\cc,\nn) \in \HH_{T_\star}^*(\M_{\cc}(\nn)^{\Z_R} \times \M_{\cc}(\nn + \1_\ell)^{\Z_R}),}\]
\[\textstyle{ \frakH_k^\ell(\cc; \vv^1,\dots,\vv^s) \in \HH_{T_\star}^*(\M_{\cc}(\nn)^{\Z_R} \times \M_{\cc}(\nn)^{\Z_R}).}\] 
Recall from the Localization Theorem \ref{thm:localization} that we have isomorphisms $f_1$ and $f_2$ as in the diagram below:

\begin{center}
\begin{tikzpicture}[>=stealth]

\draw (-0.9,0) node {$\HH_{T_\star}^*(\M_{\cc}(\nn) \times \M_{\dee}(\mm))$} (2.5,-1.5) node {$\HH_{T_\star}^*(\M_{\cc}(\nn)^{T_\star} \times \M_{\dee}(\mm)^{T_\star}).$} (5.3,0) node {$\HH_{T_\star}^*(\M_{\cc}(\nn)^{\Z_R} \times \M_{\dee}(\mm)^{\Z_R})$};

\draw[->] (0,-0.3) -- (1.2,-1.2); \draw[->] (5,-0.3) --  (3.8,-1.2); \draw[dashed,->] (2.7,0) -- (1.3,0);

\draw (0,-0.75) node {$f_1$} (5,-0.75) node {$f_2$};

\end{tikzpicture}
\end{center}
Let
\[\textstyle{ \mu: \HH_{T_\star}^*(\M_{\cc}(\nn)^{T_\star} \times \M_{\dee}(\mm)^{T_\star}) \rightarrow \HH_{T_\star}^*(\M_{\cc}(\nn)^{T_\star} \times \M_{\dee}(\mm)^{T_\star}), }\]
be the $\C(b_1,\dots,b_s,\epsilon)$-linear map determined by
\[\textstyle{ 1_{(\II,\JJ)} \mapsto \frac{e_{T_\star}(\T_{\II,\JJ}')}{e_{T_\star}(\T_{\II,\JJ})} 1_{(\II,\JJ)}, }\]
where $\T_{\II,\JJ}$ and $\T'_{\II,\JJ}$ are the tangent spaces of $(\II,\JJ)$ in $\M_{\cc}(\nn) \times \M_{\dee}(\mm)$ and $\M_{\cc}(\nn)^{\Z_R} \times \M_{\dee}(\mm)^{\Z_R}$, respectively. Let $\eta: \HH^*_{T_\star}(\M_{\cc}(\nn)^{\Z_R} \times \M_{\cc}(\mm)^{\Z_R}) \rightarrow \HH_{T_\star}^*(\M_{\cc}(\nn) \times \M_{\dee}(\mm))$ be the composition
\begin{equation}\textstyle{\label{eq:eta}
\eta := f_1^{-1} \circ \mu \circ f_2.
}\end{equation}

\begin{lemma}
The map $\eta$ is degree-preserving. In particular, the images of $\frakE_k^\ell(\cc,\nn)$, $\frakF_k^\ell(\cc,\nn)$ and $\frakH_k^\ell(\cc;\vv^1,\dots,\vv^s)$ under $\eta$ lie in the middle degree cohomology of $\M_{\cc}(\nn) \times \M_{\cc}(\nn \mp \1_\ell)$ and $\M_{\cc}(\nn) \times \M_{\cc}(\nn)$, respectively.
\end{lemma}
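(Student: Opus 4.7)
The plan is to establish that $\eta$ preserves cohomological degree component-by-component on the connected components of $\M_{\cc}(\nn)^{\Z_R} \times \M_{\dee}(\mm)^{\Z_R}$, and then compare the declared degrees of $\frakE_k^\ell$, $\frakF_k^\ell$, and $\frakH_k^\ell$ with the appropriate middle degrees. Throughout, I fix a connected component $Z = \MM_{\cc}(\vv^1,\dots,\vv^s) \times \MM_{\dee}(\uu^1,\dots,\uu^s)$ of $\M_{\cc}(\nn)^{\Z_R} \times \M_{\dee}(\mm)^{\Z_R}$, where $\nn_\ell = |\vv^\ell|$ and $\mm_\ell = |\uu^\ell|$, and write $D := 2\dim_{\C}(\M_{\cc}(\nn) \times \M_{\dee}(\mm)) = 4(|\nn|+|\mm|)$, using $\dim_{\C} \M_{\cc_\ell}(1,k) = 2k$.

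By the Localization Theorem \ref{thm:localization}, $f_2$ sends a homogeneous class of degree $d$ supported on $Z$ to $(i_x^*\alpha/e_{T_\star}(\T'_x))_{x \in Z^{T_\star}}$, a class of degree $d - 2\dim_{\C} Z$, since dividing by the Euler class of $\T'_x$ (of degree $2\dim_{\C}\T'_x = 2\dim_{\C}Z$) lowers the degree by $2\dim_{\C}Z$. By its defining formula, $\mu$ multiplies the $(\II,\JJ)$-component by the homogeneous rational function $e_{T_\star}(\T'_{\II,\JJ})/e_{T_\star}(\T_{\II,\JJ})$, which has degree $2\dim_{\C}Z - D$. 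Finally, $f_1^{-1}$ is the Gysin pushforward along the inclusion $\M_{\cc}(\nn)^{T_\star} \times \M_{\dee}(\mm)^{T_\star} \hookrightarrow \M_{\cc}(\nn) \times \M_{\dee}(\mm)$, which shifts degree by $+D$. Summing the three contributions yields $-2\dim_{\C}Z + (2\dim_{\C}Z - D) + D = 0$, so $\eta|_Z$ is degree-preserving. Since this holds on every component, $\eta$ preserves cohomological degree globally.

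For the three specific elements it then suffices to check that their declared degrees coincide with the middle degrees of the appropriate products of moduli spaces. Since $\dim_{\C}\M_{\cc}(\nn) = 2|\nn|$, the middle degree of $\M_{\cc}(\nn) \times \M_{\cc}(\nn \pm \1_\ell)$ is $2|\nn| + 2(|\nn|\pm 1) = 4|\nn| \pm 2$, matching the stated degrees of $\frakE_k^\ell(\cc,\nn)$ and $\frakF_k^\ell(\cc,\nn)$; likewise, the middle degree of $\M_{\cc}(\nn) \times \M_{\cc}(\nn)$ is $4|\nn|$, matching the degree of $\frakH_k^\ell(\cc;\vv^1,\dots,\vv^s)$ once we use $|\nn| = \sum_\ell |\vv^\ell|$. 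Since $\eta$ preserves degree, the images of these three elements lie in the middle-degree cohomology of the corresponding products, as claimed.

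The main point — rather than a genuine obstacle — is that the individual shifts of $f_2$ and $f_1^{-1}$ depend on both $\dim_{\C}Z$ and $\dim_{\C}(\M_{\cc}(\nn) \times \M_{\dee}(\mm))$, but the map $\mu$ has been designed precisely so that its own $Z$-dependent shift cancels this discrepancy, producing a uniform degree preservation independent of the component. This is the sense in which $\eta$ is the ``right'' transport map between the $\Z_R$-fixed locus and the full moduli space.
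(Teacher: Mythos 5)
Your proof is correct and follows essentially the same route as the paper: you track the degree shifts of $f_2$, $\mu$, and $f_1^{-1}$ (namely $-2\dim_{\C}Z$, $2\dim_{\C}Z - D$, and $+D$) and observe that they cancel, then match the declared degrees $4|\nn|-2$, $4|\nn|+2$, $4|\nn|$ of $\frakE_k^\ell$, $\frakF_k^\ell$, $\frakH_k^\ell$ against the middle degrees of the corresponding products. The only difference is presentational — you make the component-wise dimensions explicit where the paper phrases the same cancellation in terms of the ranks of the Euler classes $e_{T_\star}(\T'_{\II,\JJ})$ and $e_{T_\star}(\T_{\II,\JJ})$.
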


\begin{proof}
The map $f_2$ decreases the degree by $\rk(e_{T_\star}(\T'_{\II,\JJ}))$. The map $\mu$ increases the degree by $\rk(e_{T_\star}(\T'_{\II,\JJ})) - \rk(e_{T_\star}(\T_{\II,\JJ}))$. Finally, the map $f_1^{-1}$ increases the degree by $\rk(e_{T_\star}(\T_{\II,\JJ}))$. Thus, the composition of these maps is degree preserving.

The second statement in the lemma follows from the fact that $\frakE_k^\ell(\cc,\nn)$, $\frakF_k^\ell(\cc,\nn)$ and $\frakH_k^\ell$ are elements of degree $4|\nn| - 2$, $4|\nn|+2$ and $4|\nn|$, respectively.
\end{proof}

\begin{defn}[Geometric diagonal Chevalley operators]\label{defn:geoChevalley}
For $\ell=1,\dots,s$ and $k=0,\dots,r_\ell-1$, define operators
\[\textstyle{ \geoE_k^\ell, \geoF_k^\ell, \geoH_k^\ell: \bigoplus_{\nn,\cc} \HH_{T_\star}^{2|\nn|}(\M_{\cc}(\nn)) \rightarrow \bigoplus_{\nn,\cc} \HH_{T_\star}^{2|\nn|}(\M_{\cc}(\nn)),}\]
by
\[\textstyle{ \geoE_k^\ell|_{\HH_{T_\star}^{2|\nn|}(\M_{\cc}(\nn))} := \eta(\frakE_k^\ell(\cc,\nn))  \in \HH_{T_\star}^{4|\nn|-2}(\M_{\cc}(\nn) \times \M_{\cc}(\nn - \1_\ell)), }\]
\[\textstyle{ \geoF_k^\ell|_{\HH_{T_\star}^{2|\nn|}(\M_{\cc}(\nn))} := \eta(\frakF_k^\ell(\cc,\nn))  \in \HH_{T_\star}^{4|\nn|+2}(\M_{\cc}(\nn) \times \M_{\cc}(\nn + \1_\ell)), }\]
\[\textstyle{ \geoH^\ell_k|_{\HH_{T_\star}^{2|\nn|}(\M_{\cc}(\nn))} := \sum_{|\vv^i| = \nn_i} \eta( \frakH_k^\ell(\cc; \vv^1, \dots, \vv^s)) \in \HH_{T_\star}^{4|\nn|}(\M_{\cc}(\nn) \times \M_{\cc}(\nn)).}\]
The $\geoE^\ell_k$, $\geoF^\ell_k$ and $\geoH_k^\ell$ will be called \emph{geometric (diagonal) Chevalley operators}.
\end{defn}

Our next goal will be to describe the geometric Chevalley operators in terms of geometric Clifford operators. To that end, we prove the following useful lemma.

\begin{lemma}\label{lem:modOperator}
Let $(\II,\JJ) \in \M_{\cc}(\nn)^{T_\star} \times \M_{\dee}(\mm)^{T_\star}$ and $\beta \in \HH_{T_\star}^{2k}(\MM_{\cc}(\vv^1,\dots,\vv^s) \times \MM_{\dee}(\uu^1,\dots,\uu^s))$. Let
\[\textstyle{ \M_{\cc}(\nn) \times \M_{\dee}(\mm) \xhookleftarrow{i_1 \times i_2} \M_{\cc}(\nn)^{T_\star} \times \M_{\dee}(\mm)^{T_\star} \xhookrightarrow{j_1 \times j_2} \M_{\cc}(\nn)^{\Z_R} \times \M_{\dee}(\mm)^{\Z_R}, }\]
be the natural inclusions and let $(i_1 \times i_2)_{(\II,\JJ)}$ and $(j_1 \times j_2)_{(\II,\JJ)}$ denote the restrictions of $i_1 \times i_2$ and $j_1 \times j_2$, respectively, to $\{(\II,\JJ)\}$. If $(\II,\JJ) \in \MM_{\cc}(\vv^1,\dots,\vv^s)^{T_\star} \times \MM_{\dee}(\uu^1,\dots,\uu^s)^{T_\star},$ then
\[\textstyle{ \langle \eta( \beta \cup c_{|\nn|+|\mm|-k}(\KK_{\cc,\dee}(\vv^1,\uu^1,\dots,\vv^s,\uu^s)) [\II],[\JJ] \rangle = \langle \beta' \cup c_{|\nn|+|\mm|-k}(\K_{\cc,\dee}(\nn,\mm)) [\II], [\JJ] \rangle,}\]
where $\beta'$ is any preimage of $(j_1 \times j_2)_{(\II,\JJ)}^*(\beta)$ under the map $(i_1 \times i_2)^*_{(\II,\JJ)}$. Otherwise,
\[\textstyle{ \langle \eta( \beta \cup c_{|\nn|+|\mm|-k}(\KK_{\cc,\dee}(\vv^1,\uu^1,\dots,\vv^s,\uu^s))) [\II],[\JJ] \rangle = 0. }\]
\end{lemma}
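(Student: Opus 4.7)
The plan is to reduce both sides of the claimed identity to the same explicit fixed-point expression via the Localization Theorem. Since the bilinear form $\langle \gamma [\II], [\JJ] \rangle$ for any $\gamma \in \HH_{T_\star}^{2(|\nn|+|\mm|)}(\M_{\cc}(\nn) \times \M_{\dee}(\mm))$ is computed (as in the proof of Lemma \ref{lem:operator}) by extracting $(i_1 \times i_2)^*_{(\II,\JJ)}(\gamma)$ and dividing by $e_{T_\star}(\T_{\II}^-) e_{T_\star}(\T_{\JJ}^+)$, the whole statement reduces to comparing the pullbacks to $(\II,\JJ)$ of the two classes in question. Write $\alpha := \beta \cup c_{|\nn|+|\mm|-k}(\KK_{\cc,\dee}(\vv^1,\uu^1,\dots,\vv^s,\uu^s))$ for convenience.

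First I would unwind $\eta = f_1^{-1} \circ \mu \circ f_2$ at the fixed point $(\II,\JJ)$. Applying $f_2$ sends $\alpha$ to the element with $(\II,\JJ)$-component $(j_1\times j_2)^*_{(\II,\JJ)}(\alpha) / e_{T_\star}(\T'_{(\II,\JJ)})$; the map $\mu$ cancels $e_{T_\star}(\T'_{(\II,\JJ)})$ and inserts $e_{T_\star}(\T_{(\II,\JJ)})$; and $f_1^{-1}$, being the Gysin inverse of localization on $\M_{\cc}(\nn) \times \M_{\dee}(\mm)$, then yields a class in $\HH^*_{T_\star}(\M_{\cc}(\nn) \times \M_{\dee}(\mm))$ whose fibre at $(\II,\JJ)$ satisfies $(i_1 \times i_2)^*_{(\II,\JJ)}(\eta(\alpha)) = (j_1\times j_2)^*_{(\II,\JJ)}(\alpha)$.

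Next I would split on the location of $(\II,\JJ)$. If $(\II,\JJ)$ lies outside $\MM_{\cc}(\vv^1,\dots,\vv^s)^{T_\star} \times \MM_{\dee}(\uu^1,\dots,\uu^s)^{T_\star}$, then $\alpha$ is supported on a different component of $\M_{\cc}(\nn)^{\Z_R} \times \M_{\dee}(\mm)^{\Z_R}$ under the inclusion convention preceding the lemma, so $(j_1\times j_2)^*_{(\II,\JJ)}(\alpha) = 0$ and the pairing vanishes. If $(\II,\JJ)$ does lie in that subvariety, then by the very construction of $\KK_{\cc,\dee}$ as the restriction of $\K_{\cc,\dee}(\nn,\mm)$, the two bundles have equal fibres at $(\II,\JJ)$, so $c_{|\nn|+|\mm|-k}(\KK_{\cc,\dee}(\cdots)_{(\II,\JJ)}) = c_{|\nn|+|\mm|-k}(\K_{\cc,\dee}(\nn,\mm)_{(\II,\JJ)})$. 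Combined with the defining property $(i_1 \times i_2)^*_{(\II,\JJ)}(\beta') = (j_1 \times j_2)^*_{(\II,\JJ)}(\beta)$, both sides reduce to
\[
\frac{(j_1\times j_2)^*_{(\II,\JJ)}(\beta) \cup c_{|\nn|+|\mm|-k}(\K_{\cc,\dee}(\nn,\mm)_{(\II,\JJ)})}{e_{T_\star}(\T_{\II}^-) e_{T_\star}(\T_{\JJ}^+)},
\]
where Lemma \ref{lem:operator} is applied directly on the right-hand side and the analogous fixed-point computation (valid for any middle-degree class, not only a cup with a Chern class) is applied on the left.

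The routine part is the Chern class bookkeeping; the main obstacle will be keeping straight the three inclusions (the point into $\M_{\cc}(\nn) \times \M_{\dee}(\mm)$, into $\M_{\cc}(\nn)^{\Z_R} \times \M_{\dee}(\mm)^{\Z_R}$, and of the $\Z_R$-fixed locus into the full variety) and verifying that the diagram commutes so that the identification $(i_1\times i_2)^*_{(\II,\JJ)}(\eta(\alpha)) = (j_1\times j_2)^*_{(\II,\JJ)}(\alpha)$ is unambiguous. This is forced by the localization definition of $\eta$ once one checks that $f_1$ factors through restriction to the $\Z_R$-fixed locus, which in turn is automatic from the transitivity of pullback.
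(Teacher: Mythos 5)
Your proposal is correct and follows essentially the same route as the paper: unwind $\eta = f_1^{-1}\circ\mu\circ f_2$ at the fixed point, use $(i_1\times i_2)^*(i_1\times i_2)_* = e_{T_\star}(\T_{(\II,\JJ)})\cdot(-)$ to obtain the key identity $(i_1\times i_2)^*_{(\II,\JJ)}\eta(\alpha) = (j_1\times j_2)^*_{(\II,\JJ)}(\alpha)$, then split on whether $(\II,\JJ)$ lies in the relevant component of the $\Z_R$-fixed locus, using functoriality of Chern classes and the fact that $\KK$ is the restriction of $\K$ to match the fibres. The paper carries out the same steps in more explicit notation (spelling out the projection formula and the $(-1)^{|\mm|}$ sign bookkeeping between $\T_\JJ^-$ and $\T_\JJ^+$), but there is no substantive difference.
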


\begin{proof}
To simplify notation, we write $\KK = \KK_{\cc,\dee}(\vv^1,\uu^1,\dots,\vv^s,\uu^s)$, $\K = \K_{\cc,\dee}(\nn,\mm)$ and $c = c_{|\nn|+|\mm|-k}$. We begin by explicitly computing $\eta(\beta \cup c(\KK))$ (recall $\eta = f_1^{-1} \circ \mu \circ f_2$ as in \eqref{eq:eta}). By definition of $f_2$,
\begin{align*}
\textstyle{f_2 (\beta \cup c(\KK))} &\textstyle{= \left( \frac{(j_1 \times j_2)_{(\bK,\bL)}^* (\beta \cup c(\KK))}{e_{T_\star}(\T'_{(\bK,\bL)})} \right)_{(\bK,\bL) \in \M_{\cc}(\nn)^{T_\star} \times \M_{\dee}(\mm)^{T_\star}}} \\
&\textstyle{= \left( \frac{\beta_{\bK,\bL} \cup (j_1 \times j_2)_{(\bK,\bL)}^* (c(\KK))}{e_{T_\star}(\T'_{(\bK,\bL)})} \right)_{(\bK,\bL) \in \M_{\cc}(\nn)^{T_\star} \times \M_{\dee}(\mm)^{T_\star}},} \\
\end{align*}
where $\T'_{(\bK,\bL)}$ is the tangent space of $(\bK,\bL)$ in $\M_{\cc}(\nn)^{\Z_R} \times \M_{\cc}(\mm)^{\Z_R}$, and $\beta_{\bK,\bL} = (j_1 \times j_2)_{(\bK,\bL)}^*(\beta)$. By applying $\mu$, we get
\[\textstyle{ \mu \circ f_2 (\beta \cup c(\KK)) = \left( \frac{ \beta_{\bK,\bL} \cup (j_1 \times j_2)_{(\bK,\bL)}^*(c(\KK))}{e_{T_\star}(\T_{(\bK,\bL)})} \right)_{(\bK,\bL) \in \M_{\cc}(\nn)^{T_\star} \times \M_{\dee}(\mm)^{T_\star}}, }\]
where $\T_{(\bK,\bL)}$ is the tangent space of $(\bK,\bL)$ in $\M_{\cc}(\nn) \times \M_{\dee}(\mm)$. By definition, the map $f_1^{-1} = (i_1 \times i_2)_*$, and thus,
\[\textstyle{ \eta(\beta \cup c(\KK)) = (i_1 \times i_2)_*\left( \frac{\beta_{\bK,\bL} \cup (j_1 \times j_2)_{(\bK,\bL)}^*(c(\KK))}{e_{T_\star}(\T_{(\bK,\bL)})} \right)_{(\bK,\bL) \in \M_{\cc}(\nn)^{T_\star} \times \M_{\dee}(\mm)^{T_\star}}. }\]
By definition of the bilinear form,
\begin{align*}\textstyle{ \langle \eta(\beta \cup c(\KK)) [\II], [\JJ] \rangle} &\textstyle{= (-1)^{|\mm|} p_* ((i_1 \times i_2)_*)^{-1} (\eta(\beta \cup c(\KK)) \cup [\II] \otimes [\JJ])} \\
&\textstyle{= (-1)^{|\mm|} p_* ((i_1 \times i_2)_*)^{-1} \left( \frac{\eta(\beta \cup c(\KK))}{e_{T_\star}(\T_{\II}^-) e_{T_\star}(\T_{\JJ}^-)} \cup (i_1 \times i_2)_*(1_{(\II,\JJ)}) \right),}
\end{align*}
where the last equality comes from the definition of $[\II]$ and $[\JJ]$. By the projection formula,
\[\textstyle{ \langle \eta(\beta \cup c(\KK)) [\II], [\JJ] \rangle = (-1)^{|\mm|} p_* \left( \frac{(i_1 \times i_2)^* (\eta(\beta \cup c(\KK)))}{e_{T_\star}(\T_{\II}^-) e_{T_\star}(\T_{\JJ}^-)} \cup 1_{(\II,\JJ)} \right).}\]
By \cite[Equation 9.3]{cdks}, $(i_1 \times i_2)^* (i_1 \times i_2)_*$ is simply multiplication by the Euler class of the tangent space. Hence,
\[\textstyle{ (i_1 \times i_2)^* \eta(\beta \cup c(\KK)) = \left( \beta_{\bK,\bL} \cup (j_1 \times j_2)_{(\bK,\bL)}^*( c(\KK) ) \right)_{(\bK,\bL) \in \M_{\cc}(\nn)^{T_\star} \times \M_{\dee}(\mm)^{T_\star}}.}\]
Thus,
\[\textstyle{ \langle\eta( \beta \cup c(\KK)) [\II], [\JJ] \rangle = (-1)^{|\mm|} p_* \left( \frac{ \beta_{\II,\JJ} \cup (j_1 \times j_2)_{(\II,\JJ)}^*(c(\KK)) }{e_{T_\star}(\T_{\II}^-) e_{T_\star}(\T_{\JJ}^-)} \right).}\]
By construction, if $(\II,\JJ) \notin \MM_{\cc}(\vv^1,\dots,\vv^s)^{T_\star} \times \MM_{\dee}(\uu^1,\dots,\uu^s)^{T_\star},$ then $(j_1 \times j_2)^*_{(\II,\JJ)} (c(\KK)) = 0$. On the other hand, if 
\[\textstyle{ (\II,\JJ) \in \MM_{\cc}(\vv^1,\dots,\vv^s)^{T_\star} \times \MM_{\dee}(\uu^1,\dots,\uu^s)^{T_\star}, }\]
then by functoriality of the Chern class and the construction of $\KK$,
\[\textstyle{ (j_1 \times j_2)^*_{(\II,\JJ)} (c(\KK)) = c(\KK_{(\II,\JJ)}) = c(\K_{(\II,\JJ)}).}\]
Therefore,
\[\textstyle{ \langle \eta( \beta \cup c(\KK)) [\II], [\JJ] \rangle = (-1)^{|m|} \frac{\beta_{\II,\JJ} \cup c(\K_{(\II,\JJ)}) } {e_{T_\star}(\T_{\II}^-) e_{T_\star}(\T_{\JJ}^-)}  = \frac{\beta_{\II,\JJ} \cup c(\K_{(\II,\JJ)}) } {e_{T_\star}(\T_{\II}^-) e_{T_\star}(\T_{\JJ}^+)} = \langle \beta' \cup c(\K) [\II],[\JJ] \rangle,}\]
where the last equality follows from Lemma \ref{lem:operator}.
\end{proof}

\begin{cor}\label{cor:rest}
Let $(\II,\JJ) \in \M_{\cc}(\nn)^{T_\star} \times \M_{\cc}(\mm)^{T_\star}$.

\begin{enumerate}

\item For $\mm = \nn - \1_\ell$, if $(\II,\JJ) \in \MM_{\cc}(\vv^1,\dots,\vv^s)^{T_\star} \times \MM_{\cc}(\vv^1,\dots,\vv^\ell - \1_k, \dots,\vv^s)^{T_\star},$ then
\[\textstyle{ \langle \geoE_k^\ell [\II], [\JJ] \rangle = \langle \geoP_\ell(1) [\II],[\JJ] \rangle.}\]
Otherwise
\[\textstyle{ \langle \geoE_k^\ell [\II], [\JJ] \rangle = 0.}\]

\item For $\mm = \nn + \1_\ell$, if $(\II,\JJ) \in \MM_{\cc}(\vv^1,\dots,\vv^s)^{T_\star} \times \MM_{\cc}(\vv^1,\dots,\vv^\ell + \1_k, \dots,\vv^s)^{T_\star},$ then
\[\textstyle{ \langle \geoF_k^\ell [\II], [\JJ] \rangle = \langle \geoP_\ell(-1) [\II],[\JJ] \rangle.}\]
Otherwise
\[\textstyle{ \langle \geoF_k^\ell [\II], [\JJ] \rangle = 0.}\]

\item For $\mm = \nn$,
\[\textstyle{ \langle \geoH_k^\ell [\II], [\JJ] \rangle = \left( (\1_{\bar{\cc}_\ell})_k - \sum_j a^\ell_{kj} \vv^\ell_j \right) \delta_{\II, \JJ}.}\] 

\end{enumerate}

\end{cor}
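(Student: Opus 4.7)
The plan is to derive all three identities by applying Lemma \ref{lem:modOperator} to the explicit formulas for $\geoE_k^\ell$, $\geoF_k^\ell$, $\geoH_k^\ell$ given in Definition \ref{defn:geoChevalley}, and then matching the resulting matrix coefficients against the formulas for $\geoP_\ell(\pm 1)$ and $\geoP_\ell(0)$ in Definition \ref{defn:geoHeisenberg}. The crucial input is that the classes $\beta$ and $\gamma$ were both constructed to satisfy $(j_1 \times j_2)^*_{(\II,\JJ)}(\beta) = \epsilon = i^*_{(\II,\JJ)}(\gamma)$ at every $T_\star$-fixed point $(\II,\JJ)$. Thus $\gamma$ is an admissible choice of the preimage $\beta'$ appearing in Lemma \ref{lem:modOperator}, and by Lemma \ref{lem:operator} the resulting matrix coefficient depends only on this common pullback.

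For (1), I would unpack Definition \ref{defn:geoChevalley} to write $\geoE_k^\ell|_{\HH_{T_\star}^{2|\nn|}(\M_\cc(\nn))} = -(R/r_\ell)\,\eta(\beta \cup c_{\tnv}(\KK_\cc(\nn;\ell,k)^-))$, note that the underlying vector bundle on the component indexed by $(\vv^1,\dots,\vv^s)$ is $\KK_{\cc,\cc}(\vv^1,\vv^1,\dots,\vv^\ell,\vv^\ell-\1_k,\dots,\vv^s,\vv^s)$, and invoke Lemma \ref{lem:modOperator} with $\beta' = \gamma$. This immediately produces vanishing outside the predicted component, and inside it gives $\langle -(R/r_\ell)\gamma \cup c_{\tnv}(\K_{\cc,\cc}(\nn,\nn-\1_\ell))[\II],[\JJ]\rangle$, which is $\langle \geoP_\ell(1)[\II],[\JJ]\rangle$ by Definition \ref{defn:geoHeisenberg}. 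Part (2) is entirely parallel, with $\KK_\cc(\nn;\ell,k)^+$ in place of $\KK^-$ and the sign convention for $\geoP_\ell(-1)$ reversed accordingly.

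For (3), the scalar $(\1_{\bar{\cc}_\ell})_k - \sum_j a^\ell_{kj}\vv^\ell_j$ pulls out of each summand $\eta(\frakH_k^\ell(\cc;\vv^1,\dots,\vv^s))$, and Lemma \ref{lem:modOperator} applied with $\beta \equiv 1$ (no $\beta$-factor is needed here since $\frakH_k^\ell$ already sits in middle degree) shows that the $(\II,\JJ)$-matrix coefficient vanishes unless both $\II$ and $\JJ$ lie in $\MM_\cc(\vv^1,\dots,\vv^s)^{T_\star}$. By Theorem \ref{thm:variso}, each $T_\star$-fixed point of $\M_\cc(\nn)^{\Z_R}$ belongs to a unique such component, so the sum defining $\geoH_k^\ell$ collapses to at most one term. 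What remains is $\langle c_{\tnv}(\K_{\cc,\cc}(\nn,\nn))[\II],[\JJ]\rangle$, which equals $\delta_{\II,\JJ}$ by Definition \ref{defn:geoHeisenberg} (where $c_{\tnv}(\K_{\cc,\cc}(\nn,\nn))$ is recorded as the identity operator) together with the orthonormality from Lemma \ref{lem:orthonormal}. The main obstacle I foresee is the bookkeeping needed to identify the restrictions of $\KK_\cc(\nn;\ell,k)^\pm$ with $\K_{\cc,\cc}(\nn,\nn\pm\1_\ell)$ under the inclusion of fixed subvarieties, and to pin down the unique $(\vv^1,\dots,\vv^s)$ associated to each fixed point; both are ultimately consequences of the canonical decomposition in Theorem \ref{thm:variso}.
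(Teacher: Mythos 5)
Your proposal is correct and follows the same route the paper takes: apply Lemma \ref{lem:modOperator} together with the observation that $\beta$ and $\gamma$ both restrict to $\epsilon$ at fixed points (so $\gamma$ is a valid choice of $\beta'$), then compare against Definition \ref{defn:geoHeisenberg}, using $c_{\tnv}(\K_{\cc,\cc}(\nn,\nn)) = \id$ and orthonormality for part (3). The paper's proof is two sentences and treats the bookkeeping you lay out---the identification of $\KK_{\cc}(\nn;\ell,k)^{\pm}$ as the restriction of $\K_{\cc,\cc}(\nn,\nn \mp \1_\ell)$ and the uniqueness of the component containing each fixed point---as immediate.
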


\begin{proof}
Statements (1) and (2) follow directly from Lemma \ref{lem:modOperator} and the definitions of $\geoP_\ell(\pm 1)$. The third statement follows from Lemma \ref{lem:modOperator} and the fact that $c_{\tnv}(\K_{\cc,\cc}(\nn,\nn)) = \id$ as an operator on $\HH_{T_\star}^{2|\nn|}(\M_{\cc}(\nn))$ (see Definition \ref{defn:geoHeisenberg}).
\end{proof}

\begin{prop}\label{prop:P(-1)}
Let $(\II,\JJ) \in \M_{\cc}(\nn)^{T_\star} \times \M_{\cc}(\nn+\1_\ell)^{T_\star}$. If $\lambda(\II_\alpha) = \lambda(\JJ_\alpha)$ for all $\alpha \ne \ell$ and $\lambda(\JJ_\ell)$ can be obtained by adding one box to $\lambda(\II_\ell)$, then
\[\textstyle{ \langle \geoP_\ell(-1) [\II], [\JJ] \rangle = 1. }\]
Otherwise, $\langle \geoP_\ell(-1) [\II], [\JJ] \rangle = 0.$
\end{prop}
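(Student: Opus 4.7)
The plan is to reduce the computation to the one-coloured case and then invoke the Pieri rule from the theory of symmetric functions. By the K\"unneth decomposition used in the proof of Theorem \ref{thm:geoHeisenberg}, the operator $\geoP_\ell(-1)$ decomposes as a tensor product of operators: on each factor $\alpha \ne \ell$ it acts by the identity $c_{\tnv}(\K_{\cc_\alpha,\cc_\alpha}(1,\nn_\alpha,\nn_\alpha))$, and on the $\ell$-th factor it acts as the one-coloured geometric Heisenberg operator on $\M_{\cc_\ell}(1,\nn_\ell)$ (with the torus action rescaled by $t \mapsto t^{R/r_\ell}$). Combining this with the orthonormality of the basis $\{[\bK_\alpha]\}$ on each tensor factor (Lemma \ref{lem:orthonormal}) gives
\[
\langle \geoP_\ell(-1)[\II], [\JJ] \rangle = \Bigl( \prod_{\alpha \ne \ell} \delta_{\II_\alpha, \JJ_\alpha} \Bigr) \, \langle \geoP_\ell(-1)[\II_\ell], [\JJ_\ell] \rangle_\ell,
\]
where the matrix coefficient on the right is the one-coloured analogue computed in a single tensor factor.

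Second, the one-coloured case of Theorem \ref{thm:geoHeisenberg} identifies this remaining matrix coefficient with its counterpart on the bosonic Fock space $B_{\cc_\ell}$, under which the geometric $\geoP_\ell(-1)$ becomes the algebraic operator $P(-1)$. By the representation of $\osc$ on $\BB$ recalled in Section \ref{section:1}, $P(-1)$ acts as multiplication by the first power-sum $p_1$ on the $\Lambda$-factor, fixing $q^{c(\II_\ell)}$. The Pieri rule then gives
\[
p_1 \cdot s_{\lambda(\II_\ell)} \;=\; \sum_{\mu} s_\mu,
\]
where $\mu$ runs over all Young diagrams obtained from $\lambda(\II_\ell)$ by adding exactly one box.

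Finally, the isomorphism of Theorem \ref{thm:geoHeisenberg} sends $[\II_\ell] \mapsto s_{\lambda(\II_\ell)} \otimes q^{c(\II_\ell)}$, so by Lemma \ref{lem:orthonormal} the transferred pairing on $B_{\cc_\ell}$ makes the Schur monomials $\{s_\mu \otimes q^c\}$ orthonormal. Pairing the expansion above with $s_{\lambda(\JJ_\ell)} \otimes q^{c(\JJ_\ell)}$ (and noting that $c(\II_\ell) = \cc_\ell = c(\JJ_\ell)$ automatically, since both $\II$ and $\JJ$ have the same charge vector $\cc$), the pairing equals $1$ precisely when $\lambda(\JJ_\ell)$ is obtained from $\lambda(\II_\ell)$ by adding a single box, and $0$ otherwise. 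Reassembling with the $s-1$ Kronecker deltas from the first step yields the proposition.

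The main obstacle is bookkeeping rather than new computation: one must carefully verify that the K\"unneth decomposition of $\geoP_\ell(-1)$ collapses the $s-1$ factors into Kronecker deltas and that the inner product on $\BB$ transferred from $\A$ is the standard one making Schur monomials orthonormal. Both points follow directly from Lemma \ref{lem:orthonormal} and the construction, so the serious combinatorial content of the proposition is entirely absorbed into the Pieri rule.
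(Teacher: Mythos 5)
Your argument is correct, but it takes a different route from the paper. The paper's proof of Proposition \ref{prop:P(-1)} is a direct citation: it invokes \cite[Proposition 5.3 and the proof of Theorem 3.14]{savlic}, where the matrix coefficients of the one-coloured $\geoP(-n)$ in the fixed-point basis are computed (for $n=1$ the relevant skew shape $\lambda(\JJ_\ell)-\lambda(\II_\ell)$ is a single box, giving coefficient $1$). You instead deduce the statement internally from Theorem \ref{thm:geoHeisenberg}: since that theorem asserts not merely the existence of an isomorphism $\A \to \BB$ of oscillator modules but that the \emph{specific} map $[\II] \mapsto \bigotimes_\alpha \left(s_{\lambda(\II_\alpha)} \otimes q^{c(\II_\alpha)}\right)$ intertwines $\geoP_\ell(k)$ with $P_\ell(k)$, the action of $\geoP_\ell(-1)$ on $[\II]$ is forced to be multiplication by $p_1$ in the $\ell$-th tensor factor, and the Pieri rule $p_1 s_\lambda = \sum_{\mu = \lambda + \square} s_\mu$ together with the orthonormality of the $[\JJ]$ (Lemma \ref{lem:orthonormal}) and the charge bookkeeping gives exactly the claimed matrix coefficients. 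This is a valid and non-circular deduction within the paper's logical order (Theorem \ref{thm:geoHeisenberg} precedes the proposition and its proof does not use it), and it has the merit of making the combinatorial content explicit rather than hiding it in a reference; your preliminary K\"unneth step is harmless but redundant once Theorem \ref{thm:geoHeisenberg} is invoked, since the $s$-coloured $P_\ell(-1)$ already acts as $1^{\otimes \ell-1}\otimes p_1 \otimes 1^{\otimes s-\ell}$ on $\BB$. The only caveat is that the heavy lifting is still outsourced: the precise form of the intertwiner in Theorem \ref{thm:geoHeisenberg} rests on the same external input (\cite[Theorem 3.14]{savlic}) that the paper cites, so your proof is best viewed as unpacking the paper's citation of \cite[Proposition 5.3]{savlic} into the Pieri rule rather than supplying an independent geometric computation.
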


\begin{proof}
This is a direct result of \cite[Proposition 5.3 and the proof of Theorem 3.14]{savlic}. Note that in our case, $\lambda(\JJ_\ell) - \lambda(\II_\ell)$ consists of a single box.
\end{proof}

\begin{lemma}\label{lem:boxCount}
If $\II \in \MM_{\cc}(\vv^1, \dots, \vv^s)^{T_\star}$, then $\vv^\ell_k$ is equal to the number of boxes in $\lambda(\II_\ell)$ whose residue is congruent to $k-\cc_\ell$ modulo $r_\ell$.
\end{lemma}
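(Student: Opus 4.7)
The plan is to reduce the statement to the single-factor case and then compute the $\Z_{r_\ell}$-weight decomposition of a natural basis at a torus fixed point. Since $\MM_{\cc}(\vv^1,\dots,\vv^s)$ is a product, a $T_\star$-fixed point decomposes as $\II = (\II_1,\dots,\II_s)$ with $\II_\ell \in \MM(r_\ell;\vv^\ell,\1_{\bar{\cc}_\ell})^{T_\star}$. I would first use the isomorphism $\overline{\varphi}_{\vv^\ell}$ of Lemma \ref{lem:dir2} (together with Theorem \ref{thm:variso}) to identify $\II_\ell$ with a point $[A,B,i] \in \M_{\cc_\ell}(1,\nn_\ell)^{\Z_{r_\ell}}$. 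Under this identification, Remark \ref{rmk:F} gives precisely $\vv^\ell_k = \dim V_k(A,B,i)$, so the task reduces to computing these dimensions.

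Next, I would observe that the $T_\star$-action on $\M_{\cc_\ell}(1,\nn_\ell)$ factors through the $T$-action via $t \mapsto t^{R/r_\ell}$, so $\II_\ell$ is in particular a $T$-fixed point of $\M_{\cc_\ell}(1,\nn_\ell)$. By \cite[Proposition 2.9]{nak2} (used in the proof of Lemma \ref{lem:fixedpoints}), the $T$-fixed points of $\M_{\cc_\ell}(1,\nn_\ell)$ are in bijection with Young diagrams of size $\nn_\ell$, and at the fixed point associated to $\lambda = \lambda(\II_\ell)$ the stability condition, combined with $j=0$ and $[A,B]=0$, gives a canonical basis of $\C^{\nn_\ell}$ indexed by the boxes of $\lambda$, namely
\[
\{\, A^{i-1} B^{j-1} i(1) \,\}_{(i,j)\in\lambda}.
\]
(The order of $A$ and $B$ is irrelevant since they commute at this fixed point.)

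Finally, I would invoke the explicit formula \eqref{eq:Vk} of Remark \ref{rmk:F}, which says
\[
V_k(A,B,i) = \vspan\bigl\{ A^p B^q i(1) \;\big|\; q - p \equiv k - \cc_\ell \pmod{r_\ell} \bigr\}.
\]
Applying this to the basis above with $p = i-1$ and $q = j-1$, the basis vector corresponding to box $(i,j)$ lies in $V_k$ precisely when $j - i \equiv k - \cc_\ell \pmod{r_\ell}$. Since the residue of $(i,j)$ is defined to be $j - i$, this shows that the basis of $V_k$ is in bijection with the boxes of $\lambda(\II_\ell)$ of residue congruent to $k - \cc_\ell$ modulo $r_\ell$, yielding $\vv^\ell_k$ as claimed.

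The only potentially delicate point is matching the sign convention for residues with the $\Z_{r_\ell}$-weight formula; choosing the basis in the order $A^{i-1}B^{j-1}i(1)$ rather than $B^{j-1}A^{i-1}i(1)$ is immaterial by commutativity, but one must be careful that the $\Z_{r_\ell}$-embedding in \eqref{eq:Zmaction} and the action in Remark \ref{rmk:F} are set up so that $A$ lowers the weight and $B$ raises it, so that $i(1) \in V_{\bar{\cc}_\ell}$. This is verified by a direct computation, and no substantive obstacle remains.
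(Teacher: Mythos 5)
Your proof is correct and takes essentially the same route as the paper: identify $\II_\ell$ with a $\Z_{r_\ell}$-fixed point $[A,B,i]$ of $\M_{\cc_\ell}(1,\nn_\ell)$, use that the nonzero monomials $A^{p}B^{q}i$ form a basis indexed by the boxes of $\lambda(\II_\ell)$ (the paper cites \cite[Proposition 2.9]{nak3} for exactly this box--monomial correspondence), and then read off the weight spaces from Equation \eqref{eq:Vk} to match residues modulo $r_\ell$. The only differences are bookkeeping: you attribute the basis to stability plus $[A,B]=0$ rather than to the Nakajima reference, and your concluding remark about $A$ lowering and $B$ raising the weight with $\im i \subseteq V_{\bar{\cc}_\ell}$ is already supplied by Remark \ref{rmk:F}.
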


\begin{proof}
Let $\II_\ell = [A,B,i] \in \M_{\cc_\ell}(1,\nn_\ell)^{T_\star}$. By \cite[Proposition 2.9]{nak3}, $\lambda(\II_\ell)$ is obtained from $\II_\ell$ by drawing a box in the $(p,q)$-th position if $A^{p-1} B^{q-1} i \ne 0$ (note that our Young diagrams are rotated $90 \degree$ clockwise from those in \cite{nak3}). From Equation \eqref{eq:Vk},
\[\textstyle{\vv^\ell_k = \dim \vspan\{ A^p B^q i \; | \; q-p \equiv k - \cc_\ell \!\!\! \mod r_\ell \}.}\]
Since the nonzero $A^p B^q i$ are linearly independent, the boxes $(p,q) \in \lambda(\II_\ell)$ whose residue is congruent to $k-\cc_\ell \!\!\! \mod r_\ell$ are in one-to-one correspondence with a basis of $\vspan\{ A^p B^q i \; | \; q-p \equiv k - \cc_\ell \!\!\! \mod r_\ell \}$. Thus, $\vv^\ell_k$ is equal to the number of such boxes.
\end{proof}

\begin{lemma}\label{lem:geoF}
Let $(\II,\JJ) \in \M_{\cc}(\nn)^{T_\star} \times \M_{\cc}(\nn + \1_\ell)^{T_\star}$. Then
\[\textstyle{ \langle \geoF_k^\ell [\II], [\JJ] \rangle = 1, }\]
if $\lambda(\II_\alpha) = \lambda(\JJ_\alpha)$ for all $\alpha \ne \ell$ and $\lambda(\JJ_\ell)$ can be obtained from $\lambda(\II_\ell)$ by adding one box whose residue is congruent to $k - \cc_\ell$ modulo $r_\ell$. Otherwise, $\langle \geoF_k^\ell [\II], [\JJ] \rangle = 0$.
\end{lemma}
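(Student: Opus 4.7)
The plan is to combine the three tools we already have: Corollary \ref{cor:rest}(2), Proposition \ref{prop:P(-1)}, and Lemma \ref{lem:boxCount}. Corollary \ref{cor:rest}(2) already computes $\langle \geoF_k^\ell [\II],[\JJ]\rangle$ in terms of $\langle \geoP_\ell(-1)[\II],[\JJ]\rangle$, but only under the fixed-point locus constraint $(\II,\JJ) \in \MM_{\cc}(\vv^1,\dots,\vv^s)^{T_\star} \times \MM_{\cc}(\vv^1,\dots,\vv^\ell+\1_k,\dots,\vv^s)^{T_\star}$; otherwise the pairing vanishes. Proposition \ref{prop:P(-1)} tells us when the pairing with $\geoP_\ell(-1)$ is nonzero. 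Lemma \ref{lem:boxCount} is the bridge between the fixed-point locus constraint (which is a statement about graded dimensions $\vv^i$) and the residue of the added box (which is a statement about Young diagrams).

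First I would note that $\JJ$ lies in some component $\MM_{\cc}(\uu^1,\dots,\uu^s)^{T_\star}$ of the $\Z_R$-fixed locus $\M_{\cc}(\nn+\1_\ell)^{\Z_R}$, and likewise $\II \in \MM_{\cc}(\vv^1,\dots,\vv^s)^{T_\star}$. By Corollary \ref{cor:rest}(2) the pairing $\langle \geoF_k^\ell[\II],[\JJ]\rangle$ vanishes unless $\uu^i = \vv^i$ for $i \ne \ell$ and $\uu^\ell = \vv^\ell + \1_k$, in which case it equals $\langle \geoP_\ell(-1)[\II],[\JJ]\rangle$. Applying Proposition \ref{prop:P(-1)} to this, the value is $1$ iff $\lambda(\II_\alpha) = \lambda(\JJ_\alpha)$ for all $\alpha \ne \ell$ and $\lambda(\JJ_\ell)$ is obtained from $\lambda(\II_\ell)$ by adding a single box; otherwise it is $0$.

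Finally, I would translate the dimension-vector constraint into a residue constraint. By Lemma \ref{lem:boxCount}, $\vv^\ell_j$ (resp.\ $\uu^\ell_j$) equals the number of boxes of $\lambda(\II_\ell)$ (resp.\ $\lambda(\JJ_\ell)$) with residue $\equiv j - \cc_\ell \pmod{r_\ell}$. Since $\lambda(\JJ_\ell) = \lambda(\II_\ell) \cup \{x\}$ for a single box $x$, the difference $\uu^\ell_j - \vv^\ell_j$ is $1$ if $x$ has residue $\equiv j - \cc_\ell \pmod{r_\ell}$ and $0$ otherwise. Therefore the condition $\uu^\ell = \vv^\ell + \1_k$ is equivalent to the added box $x$ having residue congruent to $k - \cc_\ell$ modulo $r_\ell$, which is exactly the remaining condition in the statement.

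There is no real obstacle in this proof: everything reduces to a careful bookkeeping of the three ingredients above. The only minor subtlety is confirming that the ``otherwise $0$'' alternatives in Corollary \ref{cor:rest}(2) and Proposition \ref{prop:P(-1)} collectively exhaust all the cases excluded from the lemma's $1$-case, which they do since any $\JJ \in \M_{\cc}(\nn+\1_\ell)^{T_\star}$ lies in exactly one $\Z_R$-fixed component.
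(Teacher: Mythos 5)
Your proposal is correct and is exactly the paper's argument: the paper's proof of this lemma simply cites Corollary \ref{cor:rest}, Proposition \ref{prop:P(-1)} and Lemma \ref{lem:boxCount}, and your write-up spells out precisely how those three results combine, including the translation of the dimension-vector condition $\uu^\ell = \vv^\ell + \1_k$ into the residue condition on the added box. Nothing is missing.
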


\begin{proof}
This follows immediately from Corollary \ref{cor:rest}, Proposition \ref{prop:P(-1)} and Lemma \ref{lem:boxCount}.
\end{proof}

\begin{prop}\label{prop:psi}
Let $(\II,\JJ) \in \M_{\cc}(\nn)^{T_\star} \times \M_{\cc}(\nn+1_\ell)^{T_\star}$. Then for all $i \in \Z$,
\[\textstyle{ \langle \geoPsi_\ell(i) \geoPsi_\ell^*(i-1) [\II], [\JJ] \rangle = \begin{cases} 1, & \text{if } \JJ = \psi_\ell(i)\psi_\ell^*(i-1) \II, \\ 0, & \text{otherwise.} \end{cases} }\]
\end{prop}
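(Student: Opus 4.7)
The plan is to reduce the statement to the algebraic side by invoking Theorem \ref{thm:geoClifford} and then verify a short sign cancellation. That theorem provides an isomorphism $\A \to \FF$ of Clifford algebra modules given by $[\II] \mapsto \II$, intertwining $\geoPsi_\ell(k)$ with $\psi_\ell(k)$ and $\geoPsi^*_\ell(k)$ with $\psi^*_\ell(k)$. Moreover, by Lemma \ref{lem:orthonormal}, the basis $\{[\II]\}$ is orthonormal with respect to $\langle \mbox{-},\mbox{-} \rangle$, so this pairing transports under the isomorphism to the pairing on $\FF$ in which the semi-infinite monomials form an orthonormal basis. Thus the left-hand side equals the coefficient of $\JJ$ in $\psi_\ell(i)\psi^*_\ell(i-1)\II$ expanded in the basis of $s$-tuples of semi-infinite monomials, and it suffices to show that this expression equals $\JJ$ (with coefficient exactly $+1$) whenever it is nonzero.

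The computation then proceeds directly from the formulas defining the $\Cl$-action on $\FF$. Writing $\II = (I_1,\dots,I_s)$ with $I_\ell = i_1 \wedge i_2 \wedge \cdots$, the operator $\psi^*_\ell(i-1)$ annihilates $\II$ unless $i-1 = i_k$ for some (unique) index $k$, in which case it produces $(-1)^{\cc_1 + \cdots + \cc_{\ell-1}}(-1)^{k-1}$ times the tuple with $i_k$ deleted from the $\ell$-th component. Applying $\psi_\ell(i)$ to the result annihilates it if $i$ already appears in $I_\ell$ (equivalently, if $i_{k-1} = i$); otherwise, since $i > i-1 = i_k > i_{k+1}$ forces $i_{k-1} > i > i_{k+1}$, the insertion happens after position $k-1$ in the contracted sequence and produces an additional sign of $(-1)^{k-1}$, together with a second factor of $(-1)^{\cc_1 + \cdots + \cc_{\ell-1}}$ (the charge components to the left of $\ell$ are unchanged by $\psi^*_\ell(i-1)$).

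The crux is that all four sign factors cancel in pairs: $(-1)^{k-1} \cdot (-1)^{k-1} = 1$ from the wedge/contract pair, and $(-1)^{\cc_1 + \cdots + \cc_{\ell-1}}$ squared from the tensor-sign convention. Consequently $\psi_\ell(i)\psi^*_\ell(i-1)\II$ is either $0$ or the unique semi-infinite monomial obtained from $\II$ by replacing $i-1$ by $i$ in the $\ell$-th component, with coefficient $+1$. Orthonormality of the semi-infinite monomial basis then gives the claimed value for the pairing. The only potentially delicate part of the argument is the sign bookkeeping, and once the wedge-insertion position in the contracted sequence is correctly identified, the remainder is routine; everything else follows from previously established results.
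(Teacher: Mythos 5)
Your proof is correct and takes essentially the same approach as the paper: the paper's entire proof is the single sentence ``This follows immediately from Theorem \ref{thm:geoClifford}.'' You make the same reduction, invoking Theorem \ref{thm:geoClifford} and Lemma \ref{lem:orthonormal} to transfer the computation to $\FF$, and then supply the sign verification---that $(-1)^{k-1}\cdot(-1)^{k-1}=1$ and $\bigl((-1)^{\cc_1+\cdots+\cc_{\ell-1}}\bigr)^2=1$---which the paper leaves implicit; that verification is genuinely needed since the proposition asserts the coefficient is exactly $+1$ rather than $\pm 1$, so your spelled-out version is a small but real improvement in completeness.
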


\begin{proof}
This follows immediately from Theorem \ref{thm:geoClifford}.
\end{proof}

\begin{thm}\label{thm:geoE,geoF}
For $\ell=1,\dots,s$ and $k=0,1,\dots,r_\ell-1$,
\[\textstyle{ \geoE^\ell_k = \sum_{i \in \Z} \geoPsi_\ell(k + ir_\ell) \geoPsi^*_\ell(k+ir_\ell+1), \qquad \geoF^\ell_k = \sum_{i \in \Z} \geoPsi_\ell(k + i r_\ell + 1) \geoPsi^*_\ell(k + i r_\ell),}\]
as operators on $\bigoplus_{\nn,\cc} \HH_{T_\star}^{2|\nn|}(\M_{\cc}(\nn))$.
\end{thm}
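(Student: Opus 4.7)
The plan is to verify both identities by comparing matrix elements on the orthonormal basis $\{[\II]\}$ of $\A$ (Lemma \ref{lem:orthonormal}). All the operators in question act nontrivially only on the $\ell$-th tensor factor (via the K\"unneth decomposition of Remark \ref{rmk:AviaKunneth}), so the matrix element $\langle \cdot [\II], [\JJ]\rangle$ vanishes unless $\II_\alpha = \JJ_\alpha$ for every $\alpha \ne \ell$, reducing the problem to a computation in the $\ell$-th slot.

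For $\geoF_k^\ell$, Lemma \ref{lem:geoF} directly computes the LHS matrix element: $\langle\geoF_k^\ell[\II],[\JJ]\rangle = 1$ precisely when $\lambda(\JJ_\ell)$ is obtained from $\lambda(\II_\ell)$ by adding one box of residue $\equiv k-\cc_\ell \pmod{r_\ell}$, and vanishes otherwise. For the RHS, I would use the Clifford algebra isomorphism $\A \cong \FF$ of Theorem \ref{thm:geoClifford} to transfer $\geoPsi_\ell(k+ir_\ell+1)\geoPsi_\ell^*(k+ir_\ell)[\II]$ to $\psi_\ell(k+ir_\ell+1)\psi_\ell^*(k+ir_\ell)\II$. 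By the explicit wedging/contracting formulas, this composite is nonzero exactly when $k+ir_\ell \in \II_\ell$ and $k+ir_\ell+1 \notin \II_\ell$, in which case the effect on $\II_\ell$ is to replace the entry $k+ir_\ell$ by $k+ir_\ell+1$. Via the bijection of Lemma \ref{lem:montoyoung} this corresponds to adding a single box to the corresponding row, and a direct calculation gives the added box residue $(k+ir_\ell)-\cc_\ell \equiv k-\cc_\ell \pmod{r_\ell}$. As $i$ ranges over $\Z$, these parameters exhaust all possible such box additions, with exactly one $i$ contributing for each valid $\JJ$; this matches the LHS.

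For $\geoE_k^\ell$ I would argue in parallel. Corollary \ref{cor:rest}(1) identifies the nonvanishing matrix elements $\langle \geoE_k^\ell [\II], [\JJ]\rangle$ with those of $\geoP_\ell(1)$, subject to the refined constraint $\vv^\ell \to \vv^\ell - \1_k$. The action of $P_\ell(1)$ on Schur functions (the transpose of Proposition \ref{prop:P(-1)}, sending $s_\lambda$ to a sum of $s_\mu$ with $\mu$ obtained from $\lambda$ by removing a single box) combined with Lemma \ref{lem:boxCount} forces the removed box to have residue $\equiv k-\cc_\ell \pmod{r_\ell}$. The RHS is handled exactly as in the $\geoF$ case: the composite $\geoPsi_\ell(k+ir_\ell)\geoPsi_\ell^*(k+ir_\ell+1)$ corresponds on $\FF$ to removing $k+ir_\ell+1$ from $\II_\ell$ and re-inserting $k+ir_\ell$, which deletes the last box of the relevant row, with an analogous computation giving residue $\equiv k-\cc_\ell \pmod{r_\ell}$. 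Summing over $i$ recovers the LHS.

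The main obstacle I anticipate is the careful sign bookkeeping on the Clifford side. Each $\geoPsi_\ell$ or $\geoPsi_\ell^*$ carries a wedging/contracting sign $(-1)^{m-1}$ depending on the position $m$ of the affected entry, plus a color-sign factor $(-1)^{\cc_1+\cdots+\cc_{\ell-1}}$ built into the action on $\FF_\cc$. In each composite $\psi_\ell(a)\psi_\ell^*(b)$ with $|a-b|=1$ these all cancel: the two color-signs are applied successively on $\FF_\cc$ and $\FF_{\cc\mp\1_\ell}$ whose first $\ell-1$ color entries coincide, and the removed and re-inserted entries occupy the same position $m$, so both wedge signs equal $(-1)^{m-1}$. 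Writing out this cancellation cleanly, together with the parallel computation for $\geoE_k^\ell$, is essentially the only nontrivial part of the verification.
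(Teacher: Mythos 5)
Your proposal is correct, and the $\geoF^\ell_k$ half is essentially the paper's argument: compute matrix elements on the orthonormal basis $\{[\II]\}$, reduce to the $\ell$-th K\"unneth factor, use Lemma \ref{lem:geoF} for the left side, and translate box addition with residue $\equiv k-\cc_\ell \pmod{r_\ell}$ into the statement $\JJ = \psi_\ell(k+ir_\ell+1)\psi_\ell^*(k+ir_\ell)\II$ for a unique $i$ (the paper packages the Clifford-side computation, including the sign cancellation you discuss at the end, into Proposition \ref{prop:psi}, so you need not redo that bookkeeping). Where you genuinely diverge is the $\geoE^\ell_k$ half: the paper does not repeat the combinatorial computation but instead deduces the $\geoE$ formula from the $\geoF$ formula by adjointness --- $\geoP_\ell(1)$ and $\geoP_\ell(-1)$ are adjoint by \cite[Lemma 3.13]{savlic}, hence $\geoE^\ell_k$ and $\geoF^\ell_k$ are adjoint via Corollary \ref{cor:rest}, and likewise $\geoPsi_\ell(i)$, $\geoPsi_\ell^*(i)$ are adjoint, so transposing the already-proved $\geoF$ identity gives the $\geoE$ identity at once. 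Your direct route (Corollary \ref{cor:rest}(1), a box-removal description of the matrix elements of $\geoP_\ell(1)$, Lemma \ref{lem:boxCount}, and the parallel Clifford computation) works, but note that the ``transpose of Proposition \ref{prop:P(-1)}'' is itself a statement you must justify: either by the same adjointness of $\geoP_\ell(\pm 1)$ the paper invokes, or via Theorem \ref{thm:geoHeisenberg}, under which $\geoP_\ell(1)$ acts as $\partial/\partial p_1$ in the $\ell$-th factor and hence removes a single box from the Schur basis. Once you state that one line, your version is a complete proof; the paper's adjointness shortcut buys you freedom from redoing the residue and sign analysis for $\geoE$, while your symmetric computation keeps both formulas on the same explicit combinatorial footing.
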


\begin{proof}
Let $\II, \JJ$ be two $s$-tuples of semi-infinite monomials of charge $\cc$. We first prove that
\begin{equation}\textstyle{\label{equn:geoF}
\langle \geoF_k^\ell [\II], [\JJ] \rangle = \left\langle \sum_{i \in \Z} \geoPsi_\ell(k + i r_\ell + 1) \geoPsi^*_\ell(k + i r_\ell) [\II],[\JJ] \right\rangle.
}\end{equation}
If there exists an $\alpha \ne \ell$ such that $\II_\alpha \ne \JJ_\alpha$, then both sides of Equation \eqref{equn:geoF} are zero and we are done. Thus, we assume that $\II_\alpha = \JJ_\alpha$ for all $\alpha \ne \ell$. Write
\[\textstyle{ \II_\ell = i_1 \wedge i_2 \wedge i_3 \wedge \cdots, \quad \text{and} \quad \JJ_\ell = j_1 \wedge j_2 \wedge j_3 \wedge \cdots,}\]
where $i_m, j_m \in \Z$. Recall that the number of boxes in the $m$-th row of $\lambda(\II_\ell)$ is $i_m - \cc_\ell + m - 1$ (likewise for $\lambda(\JJ_\ell)$). Suppose that $\langle \geoF_k^\ell [\II], [\JJ] \rangle = 1$. Then by Lemma \ref{lem:geoF}, $\lambda(\JJ_\ell)$ is obtained by adding one box of the appropriate residue to $\lambda(\II_\ell)$. This means that there exists an $m \in \N$ such that $j_n = i_n$ for all $n \ne m$ and $j_m = i_m + 1$. The position of the added box is thus $(m, j_m - \cc_\ell + m - 1)$. The residue of the added box must be congruent to $k - \cc_\ell$ modulo $r_\ell$, and so,
\[\textstyle{ ( j_m - \cc_\ell + m - 1 ) - m = j_m - \cc_\ell - 1 \equiv k - \cc_\ell \!\!\! \mod r_\ell, }\]
or equivalently
\[\textstyle{ j_m - k - 1 \equiv 0 \!\!\! \mod r_\ell.}\]
Thus, $j_m = k + ir_\ell + 1$, for some $i \in \Z$. Now, since $j_n = i_n$ for all $n \ne m$ and $j_m = i_m + 1$, we have that
\[\textstyle{ \JJ = \psi_\ell(j_m) \psi_\ell^*(j_m -1) \II, \quad \text{and} \quad \JJ \ne \psi_\ell(j) \psi_\ell^*(j-1) \II, \quad \text{for all } j \ne j_m.}\]
Hence,
\[\textstyle{ \left\langle \sum_{i \in \Z}\geoPsi_\ell(k + i r_\ell + 1) \geoPsi_\ell^*(k + i r_\ell) [\II],[\JJ] \right\rangle = \langle \geoPsi_\ell(j_m) \geoPsi_\ell^*(j_m-1) [\II],[\JJ] \rangle = 1,}\]
by Proposition \ref{prop:psi}. We have therefore proven that Equation \eqref{equn:geoF} holds when $\langle \geoF_k^\ell [\II], [\JJ] \rangle = 1$.

Suppose now that $\langle \geoF_k^\ell [\II], [\JJ] \rangle = 0$. Then $\lambda(\JJ_\ell)$ cannot be obtained by adding one box to $\lambda(\II_\ell)$. In particular, $\JJ \ne \psi_\ell(i) \psi_\ell^*(i-1) \II$ for all $i \in \Z$. Thus,
\[\textstyle{ \left\langle \sum_{i \in \Z} \geoPsi_\ell(k + i r_\ell + 1) \geoPsi_\ell^*(k + i r_\ell) [\II],[\JJ] \right\rangle = 0.}\]
Therefore, Equation \eqref{equn:geoF} holds in all cases, and so
\[\textstyle{ \geoF^\ell_k = \sum_{i \in \Z} \geoPsi_\ell(k + i r_\ell + 1) \geoPsi^*_\ell(k + i r_\ell).}\]

Now, since $\geoP_\ell(-1)$ and $\geoP_\ell(1)$ are adjoint (see \cite[Lemma 3.13]{savlic}), it follows from Corollary \ref{cor:rest} that $\geoE^\ell_k$ and $\geoF_k^\ell$ are adjoint. Moreover, since $\geoPsi_\ell(i)$ and $\geoPsi_\ell^*(i)$ are adjoint (see \cite[Lemma 3.5]{savlic}), it follows that
\[\textstyle{ \sum_{i \in \Z} \geoPsi_\ell(k + i r_\ell + 1) \geoPsi_\ell^*(k + i r_\ell), \quad \text{and} \quad \sum_{i \in \Z} \geoPsi_\ell(k + i r_\ell) \geoPsi_\ell^*(k + i r_\ell + 1),}\]
are adjoint. Therefore, for all $s$-tuples of semi-infinite monomials $\II$, $\JJ$,
\begin{align*}
\textstyle{\langle \geoE^\ell_k [\II], [\JJ] \rangle} &\textstyle{= \langle [\II], \geoF_k^\ell [\JJ] \rangle = \left\langle [\II], \sum_{i \in \Z} \geoPsi_\ell(k + i r_\ell + 1) \geoPsi_\ell^*(k + i r_\ell) [\JJ] \right\rangle} \\
&\textstyle{= \left\langle \sum_{i \in \Z} \geoPsi_\ell(k + i r_\ell) \geoPsi_\ell^*(k + i r_\ell +1) [\II],[\JJ] \right\rangle.}
\end{align*}
Thus,
\[\textstyle{ \geoE^\ell_k = \sum_{i \in \Z} \geoPsi_\ell(k + i r_\ell) \geoPsi^*_\ell(k + i r_\ell + 1).\qedhere}\]
\end{proof}

\begin{prop}\label{prop:[E,F]=H}
For all $\ell=1,\dots,s$ and $k=0,1,\dots,r_\ell-1$,
\[\textstyle{ [\geoE_k^\ell,\geoF_k^\ell] = \geoH_k^\ell.}\]
\end{prop}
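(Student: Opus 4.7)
The plan is to compute $[\geoE_k^\ell, \geoF_k^\ell]$ using the Clifford expressions of Theorem \ref{thm:geoE,geoF}, then compare matrix coefficients with $\geoH_k^\ell$ as given by Corollary \ref{cor:rest}(3). Since $\geoPsi_\ell, \geoPsi_\ell^*$ satisfy Clifford relations (Theorem \ref{thm:geoClifford}), the standard bilinear commutator identity
\[
[\psi(a)\psi^*(b),\psi(c)\psi^*(d)] = \delta_{bc}\psi(a)\psi^*(d) - \delta_{ad}\psi(c)\psi^*(b)
\]
applies to $\geoPsi_\ell$ and $\geoPsi_\ell^*$. Substituting and noting that $k+ir_\ell$ can equal $k+jr_\ell+1$ only when $i=j$ (since the difference would have to be $1 = (j-i)r_\ell$, forcing $i=j$ for any $r_\ell \geq 1$), all $i \neq j$ cross terms cancel, and we obtain
\[
[\geoE_k^\ell,\geoF_k^\ell] = \sum_{i \in \Z}\Bigl(\geoPsi_\ell(k+ir_\ell)\geoPsi_\ell^*(k+ir_\ell) - \geoPsi_\ell(k+ir_\ell+1)\geoPsi_\ell^*(k+ir_\ell+1)\Bigr),
\]
which defines a well-behaved operator on $\A$ because, on any fixed basis element $[\II]$, all but finitely many summands act as zero.

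Next, I would test the equality on the orthonormal basis $\{[\II]\}$ of Lemma \ref{lem:orthonormal}. Under the isomorphism $\A \xrightarrow{\cong} \FF$ of Theorem \ref{thm:geoClifford}, each bilinear $\geoPsi_\ell(a)\geoPsi_\ell^*(a)$ is diagonal in the semi-infinite monomial basis, acting as the identity on $[\II]$ if $a$ appears in $\II_\ell$ and as zero otherwise. Therefore $\langle [\geoE_k^\ell, \geoF_k^\ell][\II],[\JJ]\rangle = 0$ for $\II \neq \JJ$, matching Corollary \ref{cor:rest}(3). The diagonal coefficient is the well-defined finite difference
\[
N(\II_\ell) := \#\{i : k+ir_\ell \in \II_\ell\} - \#\{i : k+ir_\ell+1 \in \II_\ell\},
\]
where the two counts are individually infinite but differ from the corresponding vacuum counts by only finitely many indices, so their difference is unambiguously defined.

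The main step is identifying $N(\II_\ell)$ with the coefficient $\delta_{k,\bar{\cc}_\ell} - 2\vv_k^\ell + \vv_{k-1}^\ell + \vv_{k+1}^\ell$ of Corollary \ref{cor:rest}(3). I would argue in two stages: first, pair up indices where both or neither of $k+ir_\ell, k+ir_\ell+1$ lie in $\II_\ell$ (which cancel), so that $N(\II_\ell)$ counts (with sign) indices $i$ for which exactly one of the two lies in $\II_\ell$; each such $i$ corresponds to either an addable box of $\lambda(\II_\ell)$ of residue $\equiv k-\cc_\ell \pmod{r_\ell}$ (case $k+ir_\ell \in \II_\ell$, $k+ir_\ell+1 \notin \II_\ell$) or a removable such box with opposite sign. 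Second, invoke the classical identity
\[
(\#\text{addable } j\text{-boxes}) - (\#\text{removable } j\text{-boxes}) = \delta_{j,0} - 2n_j + n_{j-1} + n_{j+1}
\]
for any Young diagram and residue class $j \in \Z/r_\ell$ (provable by induction on $|\lambda|$ or via the $r_\ell$-abacus description), applied with $j = k - \cc_\ell$. Combined with Lemma \ref{lem:boxCount}, which identifies $n_j$ with $\vv_k^\ell$ (and similarly for the shifted indices), this yields exactly the coefficient of Corollary \ref{cor:rest}(3). The principal obstacle is the careful bookkeeping of this combinatorial identity together with the rigorous regularization of the formal infinite sum on individual states.
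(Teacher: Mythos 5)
Your proposal is correct in substance and reaches the identity by a somewhat different route than the paper. The paper treats $r_\ell=1$ separately and, for $r_\ell\ge 2$, works directly with matrix coefficients: using Theorem \ref{thm:geoE,geoF} (via Lemma \ref{lem:geoF} and adjointness) it reads off that $\geoF_k^\ell$ adds and $\geoE_k^\ell$ removes boxes of residue $\equiv k-\cc_\ell$, so that $[\geoE_k^\ell,\geoF_k^\ell][\II]=(|A|-|R|)[\II]$, and it then proves the counting identity $|A|-|R|=\delta_{\bar\cc_\ell,\bar k}-2\vv_k^\ell+\vv_{k+1}^\ell+\vv_{k-1}^\ell$ from scratch by a detailed analysis of the $k$-border of $\lambda(\II_\ell)$ (the sets $U,M,L$ and $\Delta_\rightarrow,\Delta_\downarrow$). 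You instead compute the commutator inside the Clifford algebra, obtaining the regularized diagonal operator $\sum_{i}\bigl(\geoPsi_\ell(k+ir_\ell)\geoPsi_\ell^*(k+ir_\ell)-\geoPsi_\ell(k+ir_\ell+1)\geoPsi_\ell^*(k+ir_\ell+1)\bigr)$, translate its diagonal coefficient into $\#\{k\text{-addable}\}-\#\{k\text{-removable}\}$, and appeal to the classical residue identity $\#A_j-\#R_j=\delta_{j,0}-2n_j+n_{j+1}+n_{j-1}$ together with Lemma \ref{lem:boxCount}. Your route is cleaner at the operator level, handles $r_\ell=1$ uniformly (there the identity reduces to the fact that every partition has exactly one more addable than removable corner, matching $\geoH_0^\ell=\id$), and the regularization issue is handled correctly since $\geoE_k^\ell[\II]$ and $\geoF_k^\ell[\II]$ are finite sums. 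What it buys less of is self-containedness: the residue identity you invoke is precisely the content of the paper's border analysis, i.e.\ the bulk of its proof, so a complete write-up should either give a precise citation or carry out the short induction on $|\lambda|$ you allude to (the inductive step being that adding a box of residue $j'$ changes both sides by $-2\delta_{j,j'}+\delta_{j,j'+1}+\delta_{j,j'-1}$).

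One small correction to your justification of the cross-term cancellation: in $[\psi(a)\psi^*(b),\psi(c)\psi^*(d)]=\delta_{bc}\,\psi(a)\psi^*(d)-\delta_{ad}\,\psi(c)\psi^*(b)$ the relevant conditions are $k+ir_\ell+1=k+jr_\ell+1$ and $k+ir_\ell=k+jr_\ell$, both of which force $i=j$ for every $r_\ell$. The equation you cite, $k+ir_\ell=k+jr_\ell+1$, does not force $i=j$ (for $r_\ell=1$ it holds with $i=j+1$), but it simply does not occur in the commutator identity, so your conclusion $[\geoE_k^\ell,\geoF_k^\ell]=\sum_i\bigl(\geoPsi_\ell(k+ir_\ell)\geoPsi_\ell^*(k+ir_\ell)-\geoPsi_\ell(k+ir_\ell+1)\geoPsi_\ell^*(k+ir_\ell+1)\bigr)$ stands.
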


\begin{proof}
If $r_\ell = 1$, then
\[\textstyle{ \geoE_0^\ell = \geoP_\ell(1), \quad \geoF_0^\ell = \geoP_\ell(-1), \quad \geoH_0^\ell = \id,}\]
and the result follows from Theorem \ref{thm:geoHeisenberg}.

Now, fix an $\ell \in \{1,\dots,s\}$ such that $r_\ell \ge 2$. For any semi-infinite monomial $I$ of charge $c \in \Z$, we will say that a box $(p,q) \in \lambda(I)$ is a $k$\emph{-box} if its residue is congruent to $k-c \!\!\! \mod r_\ell$. We will say that a box $(p,q) \in \lambda(I)$ is $k$\emph{-removable} if $(p,q)$ is a $k$-box and $(p,q)$ may be removed from $\lambda(I)$ to create a new Young diagram. Denote the set of $k$-removable boxes of $\lambda(I)$ by $R$. We will say that a box $(p,q) \notin \lambda(I)$ is $k$\emph{-addable} if $(p,q)$ is a $k$-box and $(p,q)$ may be added to $\lambda(I)$ to create a new Young diagram. Denote the set of $k$-addable boxes of $\lambda(I)$ by $A$.

By Theorem \ref{thm:geoE,geoF}, we have that, for all $\II \in \MM_{\cc}(\vv^1,\dots,\vv^s)^{T_\star}$
\[\textstyle{ \geoE_k^\ell [\II] = \sum_{\JJ} [\JJ], \quad \text{and} \quad \geoF_k^\ell [\II] = \sum_{\bK} [\bK],}\]
where the $\JJ$ run over all semi-infinite monomials such that $\JJ_i = \II_i$ for all $i \ne \ell$ and $\JJ_\ell$ is obtained from $\II_\ell$ by removing a $k$-removable box, and the $\bK$ run over all semi-infinite monomials such that $\bK_i = \II_i$ for all $i \ne \ell$ and $\bK_\ell$ is obtained from $\II_\ell$ by adding a $k$-addable box. Thus, it is easy to see that
\[\textstyle{ [\geoE_k^\ell,\geoF_k^\ell] [\II] = (|A| - |R|) [\II],}\]
where here $A$ and $R$ refer to the sets of $k$-addable and $k$-removable boxes of $\II_\ell$, respectively. Therefore, it suffices to show that
\[\textstyle{ |A| - |R| = (\1_{\bar{c}})_k - \sum_{j=0}^{r_\ell} a^\ell_{kj} \vv^\ell_j = \delta_{\bar{c},\bar{k}} - 2\vv^\ell_k + v^\ell_{k+1} + \vv^\ell_{k-1} = \delta_{\bar{c},\bar{k}} +  (\vv^\ell_{k+1} - \vv^\ell_k) + (\vv^\ell_{k-1} - \vv^\ell_k),}\]
where, of course, the indices of $\vv^\ell$ are taken modulo $r_\ell$.

For the remainder of the proof, we will identify $\II_\ell$ with its Young diagram $\lambda = \lambda(\II_\ell)$. The case where $\lambda$ is the empty Young diagram is trivial, so we assume $\lambda$ consists of at least one box. The $k$-\emph{border} of $\lambda$ is the set
\[\textstyle{ B := \{ k\text{-boxes } (p,q) \in \lambda \; | \; (p+1,q), (p,q+1), \text{ or } (p+1,q+1) \notin \lambda \}.}\]
We partition $B$ with respect to the main diagonal of $\lambda$ as follows:
\[\textstyle{ B = U \; \dot\cup \; M \; \dot\cup \; L,}\]
\[\textstyle{ U = \{ (p,q) \in B \; | \; q > p \}, \quad M = \{ (p,p) \in B \}, \quad L = \{ (p,q) \in B \; | \; p > q\}.}\]
The sets $B, U, M$ and $L$ are illustrated in the following diagram:
\begin{center}
\begin{tikzpicture}

\draw (0,0) -- (0,-6); \draw (1,0) -- (1,-6); \draw (2,0) -- (2,-4); \draw (3,0) -- (3,-3); \draw (4,0) -- (4,-3); \draw (5,0) -- (5,-1);

\draw (0,0) -- (5,0); \draw (0,-1) -- (5,-1); \draw (0,-2) -- (4,-2); \draw (0,-3) -- (4,-3); \draw (0,-4) -- (2,-4); \draw (0,-5) -- (1,-5); \draw (0,-6) -- (1,-6);

\fill[gray!40,nearly transparent] (3,0) -- (5,0) -- (5,-1) -- (4,-1) -- (4,-3) -- (2,-3) -- (2,-4) -- (1,-4) -- (1,-6) -- (0,-6) -- (0,-3) -- (1,-3) -- (1,-2) -- (3,-2) -- cycle;

\draw[dashed] (-0.5,0.5) -- (5,-5);

\draw (-0.5,-3) node {$L$} (2.5,0.5) node {$U$} (5.5,-1) node {$B$} (5.3,-5) node {$M$};

\end{tikzpicture}
\end{center}
Here $B$ is the set of $k$-boxes in the shaded region, $U$ (resp.\ $L$) is the set of boxes in $B$ that lie above (resp.\ below) the dashed line, and $M$ is the intersection of $B$ with the dashed line. Let $\Delta_{\rightarrow}$ (resp.\ $\Delta_{\downarrow}$) be the subset of $B$ consisting of boxes which have a right (resp.\ lower) neighbour. That is,
\[\textstyle{ \Delta_\rightarrow = \{ (p,q) \in B \; | \; (p,q+1) \in \lambda\}, \quad \text{and} \quad \Delta_\downarrow = \{ (p,q) \in B \; | \; (p+1,q) \in \lambda\}.}\]
Denote by $\Delta'_\rightarrow$ and $\Delta'_\downarrow$ the complements of $\Delta_\rightarrow$ and $\Delta_\downarrow$, respectively, in $B$.

We begin with the case where $k \not\equiv \cc_\ell \!\!\! \mod r_\ell$ (so that $M = \emptyset$). In the portion of $\lambda$ above the main diagonal, every $(k+1)$-box occurs as the right neighbour of a $k$-box. Conversely, every right neighbour of a $k$-box (if it has one) is a $(k+1)$-box. If a $k$-box does not have a right neighbour, it must therefore lie in $B$. Hence, the number of $k$-boxes less the number of $(k+1)$-boxes above the main diagonal is equal to $|U \cap \Delta'_\rightarrow|$. In the portion of $\lambda$ below and including the main diagonal, we can dualize this argument and conclude that the number of $(k+1)$-boxes less the number of $k$-boxes is equal to the number of $(k+1)$-boxes (on the border) with no lower neighbour. Any $(k+1)$-box not in the first column and without a lower neighbour has a left neighbour, which must be a $k$-box and lie in $U$ and $\Delta_\rightarrow$. Hence, the number of $(k+1)$-boxes less the number of $k$-boxes is equal to $|L \cap \Delta_\rightarrow| + \delta$, where $\delta = 1$ if the last box of the first column of $\lambda$ is a $(k+1)$-box and $\delta=0$ otherwise. Since $\vv^\ell_k$ (resp.\ $\vv^\ell_{k+1}$) is the number of $k$-boxes (resp.\ $(k+1)$-boxes) in $\lambda$,
\begin{equation}\textstyle{
\vv^\ell_{k+1} - \vv^\ell_k = |L \cap \Delta_\rightarrow| + \delta - |U \cap \Delta'_\rightarrow|.
}\end{equation}
By a completely analogous argument,
\begin{equation*}
\vv^\ell_{k-1} - \vv^\ell_k = |U \cap \Delta_\downarrow| + \delta' - |L \cap \Delta'_\downarrow|,
\end{equation*}
where $\delta'=1$ if the last box of the first row of $\lambda$ is a $(k-1)$-box and $\delta'=0$ otherwise. Now, we note that a $k$-box is $k$-removable if and only if it has no right or lower neighbours. Hence, $R = \Delta'_\rightarrow \cap \Delta'_\downarrow$ and so
\begin{equation}\textstyle{\label{eq:R}
|R| = |\Delta'_\rightarrow \cap \Delta'_\downarrow| = |B| - |\Delta_\rightarrow \cup \Delta_\downarrow|.}\end{equation}
We can add a $k$-box at the end of the first row (resp.\ column) if and only if the last box of the first row (resp.\ column) is a $(k-1)$-box (resp.\ $(k+1)$-box). A $k$-box $(p,q)$, where $p,q \ne 1$, is $k$-addable if and only if both $(p-1,q)$ and $(p,q-1)$ are in $\lambda$, which occurs if and only if $(p-1,q-1) \in \Delta_\rightarrow \cap \Delta_\downarrow$. Hence,
\begin{equation}\textstyle{\label{eq:Add}
|A| = |\Delta_\rightarrow \cap \Delta_\downarrow| + \delta + \delta' = |\Delta_\rightarrow| + |\Delta_\downarrow| - |\Delta_\rightarrow \cup \Delta_\downarrow| + \delta + \delta'.
}\end{equation}
Since $M = \emptyset$, we have that $B = U \; \dot\cup \; L$, and so
\begin{equation}\textstyle{\label{eq:Delta}
|\Delta_{\rightarrow/\downarrow}| = |U \cap \Delta_{\rightarrow/\downarrow}| + |L \cap \Delta_{\rightarrow/\downarrow}|
}\end{equation}
Combining equations \eqref{eq:R}, \eqref{eq:Add} and \eqref{eq:Delta}, we get
\begin{align*}
\textstyle{|A| - |R|} &\textstyle{= |U \cap \Delta_\rightarrow| + |L \cap \Delta_\rightarrow| + |U \cap \Delta_\downarrow| + |L \cap \Delta_\downarrow| - |B| + \delta + \delta'} \\
&\textstyle{= |U| - |U \cap \Delta'_\rightarrow| + |L \cap \Delta_\rightarrow| + |U \cap \Delta_\downarrow| + |L| - |L \cap \Delta'_\downarrow| - |B| + \delta + \delta'} \\
&\textstyle{= - |U \cap \Delta'_\rightarrow| + |L \cap \Delta_\rightarrow| + |U \cap \Delta_\downarrow| - |L \cap \Delta'_\downarrow| + \delta + \delta'} \\
&\textstyle{= (\vv^\ell_{k+1} - \vv^\ell_k) + (\vv^\ell_{k-1} - \vv^\ell_k).}
\end{align*}

In the case where $k \equiv \cc_\ell \!\!\! \mod r_\ell$, one only has to make a few modifications to the above arguments owing to the fact that $M$ now consists of one box. In the portion of $\lambda$ above and including the main diagonal, the number of $k$-boxes less the number of $(k+1)$-boxes is $|U \cap \Delta'_\rightarrow| + |M \cap \Delta'_\rightarrow|$. In the portion of $\lambda$ below the main diagonal, one again has that the number of $(k+1)$-boxes less the number of $k$-boxes is $|L \cap \Delta_\rightarrow| + \delta$. Hence,
\begin{equation*}
\vv^\ell_{k+1} - \vv^\ell_k = |L \cap \Delta_\rightarrow| + \delta - |U \cap \Delta'_\rightarrow| - |M \cap \Delta'_\rightarrow|.
\end{equation*}
And again by a completely analogous argument,
\begin{equation}\textstyle{
\vv^\ell_{k-1} - \vv^\ell_k = |U \cap \Delta_\downarrow| + \delta' - |L \cap \Delta'_\downarrow| - |M \cap \Delta'_\downarrow|.
}\end{equation}
One can compute $|A|$ and $|R|$ exactly as in equations \eqref{eq:Add} and \eqref{eq:R}. The difference now is that, since $M \ne \emptyset$, we have $B = U \; \dot\cup \; M \; \dot\cup \; L$, and so
\[\textstyle{ |\Delta_{\rightarrow/\downarrow}| = |U \cap \Delta_{\rightarrow/\downarrow}| + |M \cap \Delta_{\rightarrow/\downarrow}| + |L \cap \Delta_{\rightarrow/\downarrow}|.}\]
Therefore, using similar calculations as before,
\begin{align*}
\textstyle{|A| - |R|} &\textstyle{= |U| - |U \cap \Delta'_\rightarrow| + |L \cap \Delta_\rightarrow| + |U \cap \Delta_\downarrow| + |L| - |L \cap \Delta'_\downarrow| - |B| + \delta + \delta'} \\
& \textstyle{\qquad + |M \cap \Delta_\rightarrow| + |M \cap \Delta_\downarrow|\textstyle} \\
&\textstyle{= |U| - |U \cap \Delta'_\rightarrow| + |L \cap \Delta_\rightarrow| + |U \cap \Delta_\downarrow| |L| - |L \cap \Delta'_\downarrow| - |B| + \delta + \delta'} \\
& \textstyle{\qquad + |M| - |M \cap \Delta'_\rightarrow| + |M| - |M \cap \Delta'_\downarrow|} \\
&\textstyle{= - |U \cap \Delta'_\rightarrow| + |L \cap \Delta_\rightarrow| + |U \cap \Delta_\downarrow| - |L \cap \Delta'_\downarrow| + \delta + \delta' - |M \cap \Delta'_\rightarrow| + |M| - |M \cap \Delta'_\downarrow|} \\
&\textstyle{= 1 + (\vv^\ell_{k+1} - \vv^\ell_k) + (\vv^\ell_{k-1} - \vv^\ell_k),}
\end{align*}
which completes the proof.
\end{proof}

By Proposition \ref{prop:[E,F]=H} and Theorem \ref{thm:geoE,geoF} (as compared to Lemma \ref{lem:glaction}), the operators $\geoE_k^\ell$, $\geoF_k^\ell$ and $\geoH_k^\ell$ give a geometric realization of the action of $\widehat\sla_{r_\ell}$ on $\bigoplus_{\nn,\cc} \HH_{T_\star}^*(\M_{\cc}(\nn))$. To complete our geometric realization of $V_\text{basic}(\widehat\gl_r)$, it remains to construct the action of the remaining Chevalley generators of $\widehat\sla_r$ (the generators lying in the off-diagonal blocks) and the loops on the identity.

Let $\cc \in \Z^s$ and $i \ne j \in \{1,\dots,s\}$. Suppose $\nn, \mm \in \N^s$ such that $\nn_\ell = \mm_\ell$ for all $\ell \ne i,j$. By equations \eqref{eq:HeisenChern} and \eqref{eq:CliffChern}, we have that
\[\textstyle{ c_{\tnv}(\K_{\cc, \cc + \1_i - \1_j} (\nn,\mm)) = c_{|\nn| + |\mm|} ( \K_{\cc, \cc + \1_i - \1_j} (\nn,\mm)). }\]
For $i = 1, \dots, s-1$ and $k \in \Z$, define
\[\textstyle{ \K^+_{\cc}(\nn;i,k) := \K_{\cc,\cc + \1_i - \1_{i+1}} (\nn, \nn + ( (k+1)r_i - \cc_i - 1) \1_i - (k r_{i+1} - \cc_{i+1} + 1) \1_{i+1}), }\]
\[\textstyle{ \K^-_{\cc}(\nn;i,k) := \K_{\cc,\cc + \1_{i+1} - \1_i} (\nn, \nn + ( (k-1)r_{i+1} - \cc_{i+1}) \1_{i+1} - (k r_i - \cc_i) \1_i). }\]
We also define
\[\textstyle{ \K^+_{\cc}(\nn;0,k) := \K_{\cc,\cc + \1_s - \1_1} (\nn, \nn + ( k r_s - \cc_s - 1) \1_s - ( k r_1 - \cc_1 + 1) \1_1), }\]
\[\textstyle{ \K^-_{\cc}(\nn;0,k) := \K_{\cc,\cc + \1_1 - \1_s} (\nn, \nn + ( k r_1 - \cc_1) \1_1 - ( k r_s - \cc_s) \1_s).}\]

\begin{defn}[Geometric off-diagonal Chevalley operators]
For $i=0,1,\dots,s-1$, define operators
\[\textstyle{ \geoE'_i, \geoF'_i, \geoH_i' : \bigoplus_{\nn,\cc} \HH_{T_\star}^{2|\nn|}(\M_{\cc}(\nn)) \to \bigoplus_{\nn,\cc} \HH_{T_\star}^{2|\nn|}(\M_{\cc}(\nn)), }\]
by
\[\textstyle{ \geoE'_i|_{\HH_{T_\star}^{2|\nn|}(\M_{\cc}(\nn))} = (-1)^{\cc_i} \sum_{k \in \Z} c_{\tnv}(\K_{\cc}^+(\nn;i,k)), \quad \geoF'_i|_{\HH_{T_\star}^{2|\nn|}(\M_{\cc}(\nn))} = (-1)^{\cc_i} \sum_{k \in \Z} c_{\tnv}(\K_{\cc}^-(\nn;i,k)), }\]
for $i=1,\dots,s-1$, and
\[\textstyle{ \geoE'_0|_{\HH_{T_\star}^{2|\nn|}(\M_{\cc}(\nn))} = (-1)^{\cc_1 + \dots + \cc_{s-1}} \sum_{k \in \Z} c_{\tnv}(\K_{\cc}^+(\nn;0,k)),}\]
\[\textstyle{ \geoF'_0|_{\HH_{T_\star}^{2|\nn|}(\M_{\cc}(\nn))} = (-1)^{\cc_1 + \dots + \cc_{s-1}} \sum_{k \in \Z} c_{\tnv}(\K_{\cc}^-(\nn;0,k)), }\]
and finally, $\geoH'_i := [\geoE'_i, \geoF'_i]$, for all $i=0,\dots,s-1$.
\end{defn}

\begin{rmk}
It will follow from Theorem \ref{thm:main}, that for $\II \in \MM_{\cc}(\vv^1,\dots,\vv^s)^{T_\star}$, the element $[\II]$ lies in the $\vv^\ell$-weight space of $\widehat\gl_{r_\ell}$, thus allowing us to define the operators $\geoH_k^\ell$ explicitly in terms of the $\vv^\ell$. For the off-diagonal Chevalley operators, the $\vv^\ell$ no longer encode information about the weights of $\widehat\gl_r$. Hence, our defining $\geoH'_i$ simply as the commutator of $\geoE'_i$ and $\geoF_i'$ seems the best we can achieve.
\end{rmk}

\begin{lemma}\label{lem:offdiag}
As operators on $\A$,
\[\textstyle{ \geoE'_i = \sum_{k \in \Z} \geoPsi_i((k+1)r_i) \geoPsi_{i+1}^*(k r_{i+1} + 1), \quad \geoF'_i = \sum_{k \in \Z} \geoPsi_{i+1}((k-1) r_{i+1} + 1) \geoPsi^*_i(k r_i),}\]
for all $i=1, \dots, s-1$, and
\[\textstyle{ \geoE'_0 = \sum_{k \in \Z} \geoPsi_s(k r_s) \geoPsi^*_1(k r_1+1), \quad \geoF'_0 = \sum_{k \in \Z} \geoPsi_1 (kr_1+1) \geoPsi_s^*(k r_s).}\]
\end{lemma}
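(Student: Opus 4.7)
The plan is to apply the bundle factorization of Lemma \ref{lem:vbpullback} to reduce the top Chern class of $\K^{\pm}_\cc(\nn;i,k)$ to a tensor product of top Chern classes of Clifford-type bundles already treated in the proof of Theorem \ref{thm:geoClifford}. Once this factorization is established, the claimed Clifford-operator formulas for $\geoE'_i$ and $\geoF'_i$ follow immediately modulo a sign reconciliation.

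For $i \in \{1,\dots,s-1\}$, the bundle $\K^+_\cc(\nn;i,k) = \K_{\cc,\cc+\1_i-\1_{i+1}}(\nn,\mm)$, where $\mm_i = \nn_i + (k+1)r_i - \cc_i - 1$, $\mm_{i+1} = \nn_{i+1} - (kr_{i+1}-\cc_{i+1}+1)$, and $\mm_\ell = \nn_\ell$ for $\ell \notin \{i,i+1\}$, decomposes by Lemma \ref{lem:vbpullback} as $\bigoplus_\ell f_\ell^*(\K_{\cc_\ell,\cc'_\ell}(1,\nn_\ell,\mm_\ell))$, with $\cc' := \cc + \1_i - \1_{i+1}$. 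For $\ell \notin \{i,i+1\}$ the factor is $\K_{\cc_\ell,\cc_\ell}(1,\nn_\ell,\nn_\ell)$, whose top nonvanishing Chern class acts as the identity operator on the corresponding cohomology (cf.\ the proof of Theorem \ref{thm:geoHeisenberg}). The factor at $\ell = i$ is $\K_{\cc_i,\cc_i+1}(1,\nn_i,\nn_i+(k+1)r_i-\cc_i-1)$, precisely the bundle defining $\geoPsi_i((k+1)r_i)$; and the factor at $\ell = i+1$ is $\K_{\cc_{i+1},\cc_{i+1}-1}(1,\nn_{i+1},\nn_{i+1}-(kr_{i+1}+1-\cc_{i+1}))$, precisely the bundle defining $\geoPsi^*_{i+1}(kr_{i+1}+1)$. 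Under the K\"unneth identification of Remark \ref{rmk:AviaKunneth}, the product of these top Chern classes realizes the composition $\geoPsi_i((k+1)r_i) \circ \geoPsi^*_{i+1}(kr_{i+1}+1)$ on the space $\A$.

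Summing over $k \in \Z$ yields the first claimed identity up to a sign prefactor. The two Clifford sign contributions combine as $(-1)^{\cc_1+\cdots+\cc_i}$ (from $\geoPsi^*_{i+1}$ acting on $\HH^*_{T_\star}(\M_\cc(\nn))$) times $(-1)^{(\cc-\1_{i+1})_1+\cdots+(\cc-\1_{i+1})_{i-1}} = (-1)^{\cc_1+\cdots+\cc_{i-1}}$ (from $\geoPsi_i$ on the intermediate cohomology, using $i-1 < i+1$) to give $(-1)^{\cc_i}$, matching the sign in the definition of $\geoE'_i$. The identity for $\geoF'_i$ follows by applying the same factorization to $\K^-_\cc(\nn;i,k)$, and the wrap-around cases $\geoE'_0$ and $\geoF'_0$ are handled analogously. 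The main subtlety will be precisely the wrap-around sign bookkeeping: here $\geoPsi^*_1$ shifts $\cc_1$ before $\geoPsi_s$ acts, so the sign $(-1)^{(\cc-\1_1)_1+\cdots+(\cc-\1_1)_{s-1}}$ from $\geoPsi_s$ inherits this shift, and one must confirm that after combining all contributions the result agrees with the prefactor $(-1)^{\cc_1+\cdots+\cc_{s-1}}$ appearing in the definitions of $\geoE'_0$ and $\geoF'_0$. The bundle-theoretic content is already furnished by Lemma \ref{lem:vbpullback}, so this sign verification will be the only technical point requiring direct computation.
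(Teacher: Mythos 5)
Your proposal follows the paper's own argument quite closely: both start from the decomposition of $\K^{\pm}_{\cc}(\nn;i,k)$ into pullbacks of rank-one Clifford-type bundles via Lemma \ref{lem:vbpullback}, observe that the factors away from positions $i$ and $i+1$ are identity operators by \cite[Lemma 3.10]{savlic}, and then match the remaining two factors with the defining bundles of $\geoPsi_i$ and $\geoPsi^*_{i+1}$ under the K\"unneth identification. Your identification of the specific Clifford-operator indices $k' = (k+1)r_i$ and $k' = kr_{i+1}+1$ from the shift data is exactly what the paper's one-step equality encodes, and your sign verification for $\geoE'_i$, $i\ge 1$, is correct. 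The paper leaves this sign computation implicit, so your explicit bookkeeping is a genuine addition to what is written in the proof.

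One caution: you single out the wrap-around cases as the only place where the intermediate charge shift feeds into the second sign factor, but the same phenomenon already occurs in the $\geoF'_i$ identities for $i \ge 1$, since there the first operator applied is $\geoPsi^*_i$, which shifts $\cc_i$ before $\geoPsi_{i+1}$ reads positions $1, \dots, i$. So the claim ``the identity for $\geoF'_i$ follows by applying the same factorization'' is too quick; the sign verification there is not a formal repetition of the $\geoE'_i$ computation (where the shift sits at position $i+1$, out of range of the indices $1,\dots,i-1$). You should carry out the sign reconciliation explicitly in all four cases ($\geoE'_i$, $\geoF'_i$, $\geoE'_0$, $\geoF'_0$) rather than relying on analogy, both to close the argument and because the way the charge shift interacts with the two sign exponents differs case by case.
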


\begin{proof}
Fix $i \in \{ 1, \dots, s-1 \}$. Consider the restriction of $\geoE'_i$ to $\HH_{T_\star}^{2|\nn|}(\M_{\cc}(\nn))$. To simplify notation, let $\dee = \cc + \1_i - \1_{i+1}$ and $\mm^k = \nn + ((k+1)r_i - \cc_i -1)\1_i - (k r_{i+1} - \cc_{i+1} + 1) \1_{i+1}$. Then
\begin{align*}
\textstyle{\geoE_i'} &\textstyle{= (-1)^{\cc_i} \sum_{k \in \Z} c_{\tnv}(\K_{\cc,\dd}(\nn,\mm^k)) = (-1)^{\cc_i} \sum_{k \in \Z} c_{\tnv}\left( \bigoplus_{\ell=1}^s f_\ell^*\K_{\cc_\ell,\dee_\ell} (1,\nn_\ell, (\mm^k)_\ell) \right)} \\
&\textstyle{= (-1)^{\cc_i} \sum_{k \in \Z} \left(\prod_{\ell=1}^s ( 1^{\otimes \ell - 1} \otimes c_{\tnv}(\K_{\cc_\ell, \dee_\ell}(1,\nn_\ell,(\mm^k)_\ell)) \otimes 1^{\otimes s - \ell}\right),}
\end{align*}
where the second equality follows from Lemma \ref{lem:vbpullback} and the third equality is the application of the K\"unneth formula. For $\ell \ne i, i+1$,
\[\textstyle{ c_{\tnv}(\K_{\cc_\ell,\dee_\ell} (1,\nn_\ell, (\mm^k)_\ell)) = c_{\tnv}(\K_{\cc_\ell,\cc_\ell}(1,\nn_\ell, \nn_\ell)) = 1,}\]
by \cite[Lemma 3.10]{savlic}. Thus,
\begin{align*}
\textstyle{\geoE_i'} &\textstyle{= (-1)^{\cc_i} \sum_{k \in \Z} \left(1^{\otimes i-1} \otimes c_{\tnv}(\K_{\cc_i,\dee_i}(1,\nn_i,(\mm^k)_i)) \otimes 1^{\otimes s-i}\right) \times} \\ 
& \textstyle{\qquad \qquad \qquad \left(1^{\otimes i} \otimes c_{\tnv}(\K_{\cc_{i+1},\dee_{i+1}}(1,\nn_{i+1},(\mm^k)_{i+1})) \otimes 1^{\otimes s-i-1} \right)} \\
&\textstyle{= \sum_{k \in \Z} \geoPsi_i((k+1)r_i) \geoPsi_{i+1}^*(k r_{i+1} + 1).}
\end{align*}
The remaining equalities can be proved in an analogous manner.
\end{proof}

For $k = 0,1, \dots, r$, we can write
\[\textstyle{ k = r_1 + \cdots + r_{\ell-1} + k',}\]
for unique $1 \le \ell \le s$ and $0 \le k' \le r_{\ell} - 1$. For all $k$ such that $k' \ne 0$, let
\[\textstyle{ \geoE_k := \geoE_{k'}^\ell, \quad \geoF_k := \geoF_{k'}^\ell, \quad \geoH_k := \geoH_{k'}^\ell.}\]
For all $k$ such that $\ell \ne 1$ and $k'=0$, let
\[\textstyle{ \geoE_k := \geoE'_{\ell-1}, \quad \geoF_k := \geoF'_{\ell-1}, \quad \geoH_k := \geoH'_{\ell-1}.}\]
For $k=0$,
\[\textstyle{ \geoE_0 := \geoE'_0, \quad \geoF_0 := \geoF'_0, \quad \geoH_0 = \geoH'_0.}\]

\begin{thm}\label{thm:main}
For $k = 0,1,\dots,s-1$, and $n \in \Z-\{0\}$, the mapping
\[\textstyle{ E_k \mapsto \geoE_k, \quad F_k \mapsto \geoF_k, \quad H_k \mapsto \geoH_k, \quad I \otimes t^n \mapsto |n| \sum_{\ell=1}^s r_\ell \geoP_\ell(n r_\ell), \quad I \otimes 1 \mapsto \sum_{\ell=1}^s \geoP_\ell(0),}\]
defines a representation of $\widehat\gl_r$ on the space $\A$ and the linear map determined by $[\II] \mapsto \II$ is an isomorphism of $\widehat\gl_r$-representations $\A \to \FF$. This isomorphism maps $\A(0) \to \FF(0)$, and thus $\A(0) \cong V_\textup{basic}$.
\end{thm}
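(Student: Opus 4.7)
The plan is to deduce everything from the results already assembled: Theorems \ref{thm:geoHeisenberg} and \ref{thm:geoClifford} identifying $\A$ with bosonic/fermionic Fock space on the nose, Theorem \ref{thm:geoE,geoF} and Lemma \ref{lem:offdiag} rewriting the geometric Chevalley operators as vertex-operator expressions in the $\geoPsi_\ell, \geoPsi_\ell^*$, Proposition \ref{prop:[E,F]=H} for the diagonal $H_k$, and Lemma \ref{lem:glaction} which records the explicit algebraic action of $\widehat\gl_r$ on $\FF$ coming from Proposition \ref{thm:tkvdl}. The strategy is simply to check that, after transport along $[\II]\mapsto \II$, the geometric and algebraic generators agree term-by-term; once this is done, the fact that the map is a $\widehat\gl_r$-module isomorphism is automatic, and the claim $\A(0)\cong V_\text{basic}$ follows from the corresponding statement in Proposition \ref{thm:tkvdl}.

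First I would observe that the linear map $\Phi:\A\to\FF$ given by $[\II]\mapsto \II$ is a $\C$-linear isomorphism sending $\A(c)$ bijectively to $\FF(c)$, and, by Theorem \ref{thm:geoClifford}, intertwines $\geoPsi_\ell(i)\leftrightarrow \psi_\ell(i)$ and $\geoPsi_\ell^*(i)\leftrightarrow \psi_\ell^*(i)$ for all $\ell,i$. This is the only nontrivial identification; every other generator of $\widehat\gl_r$ will be handled by expanding it as a finite or locally finite combination of $\geoPsi,\geoPsi^*$ and then invoking Lemma \ref{lem:glaction}. Concretely, for $k=0,1,\dots,r-1$ written as $k=r_1+\dots+r_{\ell-1}+k'$:
\begin{itemize}
\item If $k'\ne 0$, the geometric operators $\geoE_k=\geoE^\ell_{k'}$ and $\geoF_k=\geoF^\ell_{k'}$ are by Theorem \ref{thm:geoE,geoF} equal to $\sum_i \geoPsi_\ell(k'+ir_\ell)\geoPsi^*_\ell(k'+ir_\ell+1)$ and $\sum_i \geoPsi_\ell(k'+ir_\ell+1)\geoPsi^*_\ell(k'+ir_\ell)$, matching the formulas for $\rho(E_k),\rho(F_k)$ in Lemma \ref{lem:glaction}.
\item If $k'=0,\ \ell\ne 1$, then $\geoE_k=\geoE'_{\ell-1}$ and $\geoF_k=\geoF'_{\ell-1}$, and Lemma \ref{lem:offdiag} yields exactly the formulas in Lemma \ref{lem:glaction} for this case.
\item For $k=0$, the formulas in Lemma \ref{lem:offdiag} for $\geoE'_0,\geoF'_0$ match those of $\rho(E_0),\rho(F_0)$ in Lemma \ref{lem:glaction}.
\end{itemize}
Applying $\Phi$, these identities translate verbatim into $\Phi\circ\geoE_k=\rho(E_k)\circ\Phi$ and $\Phi\circ\geoF_k=\rho(F_k)\circ\Phi$.

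Next, for the Cartan generators, on the algebraic side we always have $H_k=[E_k,F_k]$, while on the geometric side $\geoH_k$ is either defined as $[\geoE_k,\geoF_k]$ (off-diagonal case) or proved to equal $[\geoE_k,\geoF_k]$ in Proposition \ref{prop:[E,F]=H} (diagonal case); hence $\Phi\circ\geoH_k=\rho(H_k)\circ\Phi$ is immediate from the corresponding statements for $E_k,F_k$. For the loops on the identity, Lemma \ref{lem:glaction} gives $\rho(I\otimes t^n)=\sum_\ell \alpha_\ell(nr_\ell)=|n|\sum_\ell r_\ell\, P_\ell(nr_\ell)$ when $n\ne 0$, and $\rho(I\otimes 1)=\sum_\ell P_\ell(0)$. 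Using Theorem \ref{thm:geoHeisenberg} together with Remark \ref{rmk:AviaKunneth} and the boson-fermion correspondence \eqref{eq:bfcorr}, the Heisenberg identification $\geoP_\ell(k)\leftrightarrow P_\ell(k)$ under $\Phi$ is compatible with the Clifford identification (since both Theorem \ref{thm:geoHeisenberg} and Theorem \ref{thm:geoClifford} send $[\II]$ to the image of $\II$ in the same Fock space), so these expressions also transport correctly.

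Having matched every generator, the prescribed assignment extends to a Lie algebra homomorphism $\widehat\gl_r\to\gl(\A)$ precisely because the algebraic operators on the right of Lemma \ref{lem:glaction} do; no relations need to be checked independently on the geometric side. Thus $\A$ becomes a $\widehat\gl_r$-module and $\Phi:\A\to\FF$ is an isomorphism of $\widehat\gl_r$-modules preserving the $\Z$-grading. Restricting to the charge-zero subspace and applying Proposition \ref{thm:tkvdl} yields $\A(0)\xrightarrow{\ \cong\ }\FF(0)\cong V_\text{basic}$.

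The only genuine obstacle is the compatibility of the two Fock-space identifications used: the Heisenberg isomorphism of Theorem \ref{thm:geoHeisenberg} takes $[\II]$ to a product of Schur functions, while the Clifford isomorphism of Theorem \ref{thm:geoClifford} takes $[\II]$ to the semi-infinite monomial $\II$, and one must know that these two identifications agree via the boson-fermion correspondence \eqref{eq:bfcorr}. This is precisely the content of the boson-fermion dictionary and, in the one-coloured case, is established in \cite{savlic}; the $s$-coloured version follows by tensoring along Remark \ref{rmk:AviaKunneth}. Once this compatibility is in hand, the rest of the argument is a direct term-by-term translation.
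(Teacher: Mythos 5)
Your proof is correct and takes the same approach as the paper's terse one-sentence proof, which simply cites Theorems \ref{thm:geoClifford}, \ref{thm:geoHeisenberg}, \ref{thm:geoE,geoF} and Lemma \ref{lem:offdiag} and compares the resulting formulas to Lemma \ref{lem:glaction}. Your version helpfully makes explicit two points the paper leaves implicit: the role of Proposition \ref{prop:[E,F]=H} in matching the diagonal $\geoH_k$ with $[\geoE_k,\geoF_k]$, and the compatibility (via the boson-fermion correspondence \eqref{eq:bfcorr} and Lemma \ref{lem:montoyoung}) of the Heisenberg and Clifford Fock-space identifications of $\A$.
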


\begin{proof}
The theorem follows by Theorem \ref{thm:geoClifford}, Theorem \ref{thm:geoHeisenberg} and by comparing the formulas in Theorem \ref{thm:geoE,geoF} and Lemma \ref{lem:offdiag} to those in Lemma \ref{lem:glaction}.
\end{proof}

\begin{small}
\begin{ack}
I would like to thank Alistair Savage for his invaluable help throughout the writing of this paper. I would also like to thank Yuly Billig for his helpful advice.
\end{ack}
\end{small}

\bibliographystyle{plain}

\bibliography{docbib}

\begin{thebibliography}{10}

\bibitem{audin}
Mich{\`e}le Audin.
\newblock {\em The topology of torus actions on symplectic manifolds},
  volume~93 of {\em Progress in Mathematics}.
\newblock Birkh\"auser Verlag, Basel, 1991.
\newblock Translated from the French by the author.

\bibitem{brion}
Michel Brion.
\newblock Equivariant cohomology and equivariant intersection theory.
\newblock In {\em Representation theories and algebraic geometry ({M}ontreal,
  {PQ}, 1997)}, volume 514 of {\em NATO Adv. Sci. Inst. Ser. C Math. Phys.
  Sci.}, pages 1--37. Kluwer Acad. Publ., Dordrecht, 1998.
\newblock Notes by Alvaro Rittatore.

\bibitem{ceo}
Erik Carlsson and Andrei Okounkov.
\newblock Exts and vertex operators.
\newblock {\em Duke Math. J.}, 161(9):1797--1815, 2012.

\bibitem{cdks}
David~A. Cox and Sheldon Katz.
\newblock {\em Mirror symmetry and algebraic geometry}, volume~68 of {\em
  Mathematical Surveys and Monographs}.
\newblock American Mathematical Society, Providence, RI, 1999.

\bibitem{frekac}
I.~B. Frenkel and V.~G. Kac.
\newblock Basic representations of affine {L}ie algebras and dual resonance
  models.
\newblock {\em Invent. Math.}, 62(1):23--66, 1980/81.

\bibitem{iversen}
Birger Iversen.
\newblock A fixed point formula for action of tori on algebraic varieties.
\newblock {\em Invent. Math.}, 16:229--236, 1972.

\bibitem{kklw}
V.~G. Kac, D.~A. Kazhdan, J.~Lepowsky, and R.~L. Wilson.
\newblock Realization of the basic representations of the {E}uclidean {L}ie
  algebras.
\newblock {\em Adv. in Math.}, 42(1):83--112, 1981.

\bibitem{kac}
Victor~G. Kac.
\newblock {\em Infinite-dimensional {L}ie algebras}, volume~44 of {\em Progress
  in Mathematics}.
\newblock Birkh\"auser Boston Inc., Boston, MA, 1983.
\newblock An introduction.

\bibitem{kacpet}
Victor~G. Kac and Dale~H. Peterson.
\newblock {$112$} constructions of the basic representation of the loop group
  of {$E\sb 8$}.
\newblock In {\em Symposium on anomalies, geometry, topology ({C}hicago,
  {I}ll., 1985)}, pages 276--298. World Sci. Publishing, Singapore, 1985.

\bibitem{me}
J.~Lemay.
\newblock {\em Geometric Realizations of the Basic Representation of
  $\widehat{\mathfrak{gl}}_r$}.
\newblock PhD thesis, to appear.

\bibitem{lep}
J.~Lepowsky.
\newblock Calculus of twisted vertex operators.
\newblock {\em Proc. Nat. Acad. Sci. U.S.A.}, 82(24):8295--8299, 1985.

\bibitem{lepwil}
James Lepowsky and Robert~Lee Wilson.
\newblock Construction of the affine {L}ie algebra {$A\sb{1}\sp{{}}(1)$}.
\newblock {\em Comm. Math. Phys.}, 62(1):43--53, 1978.

\bibitem{savlic}
Anthony Licata and Alistair Savage.
\newblock Vertex operators and the geometry of moduli spaces of framed
  torsion-free sheaves.
\newblock {\em Selecta Math. (N.S.)}, 16(2):201--240, 2010.

\bibitem{mac}
I.~G. Macdonald.
\newblock {\em Symmetric functions and {H}all polynomials}.
\newblock Oxford Mathematical Monographs. The Clarendon Press Oxford University
  Press, New York, second edition, 1995.
\newblock With contributions by A. Zelevinsky, Oxford Science Publications.

\bibitem{mum}
D.~Mumford, J.~Fogarty, and F.~Kirwan.
\newblock {\em Geometric invariant theory}, volume~34 of {\em Ergebnisse der
  Mathematik und ihrer Grenzgebiete (2) [Results in Mathematics and Related
  Areas (2)]}.
\newblock Springer-Verlag, Berlin, third edition, 1994.

\bibitem{nak2}
Hiraku Nakajima.
\newblock Quiver varieties and {K}ac-{M}oody algebras.
\newblock {\em Duke Math. J.}, 91(3):515--560, 1998.

\bibitem{nak1}
Hiraku Nakajima.
\newblock {\em Lectures on {H}ilbert schemes of points on surfaces}, volume~18
  of {\em University Lecture Series}.
\newblock American Mathematical Society, Providence, RI, 1999.

\bibitem{nak4}
Hiraku Nakajima.
\newblock Quiver varieties and tensor products.
\newblock {\em Invent. Math.}, 146(2):399--449, 2001.

\bibitem{nak3}
Hiraku Nakajima and K{\=o}ta Yoshioka.
\newblock Instanton counting on blowup. {I}. 4-dimensional pure gauge theory.
\newblock {\em Invent. Math.}, 162(2):313--355, 2005.

\bibitem{segal}
Graeme Segal.
\newblock Unitary representations of some infinite-dimensional groups.
\newblock {\em Comm. Math. Phys.}, 80(3):301--342, 1981.

\bibitem{tkvdl}
Fons ten Kroode and Johan van~de Leur.
\newblock Bosonic and fermionic realizations of the affine algebra
  {$\widehat{\rm gl}\sb n$}.
\newblock {\em Comm. Math. Phys.}, 137(1):67--107, 1991.

\bibitem{tym}
Julianna~S. Tymoczko.
\newblock An introduction to equivariant cohomology and homology, following
  {G}oresky, {K}ottwitz, and {M}ac{P}herson.
\newblock In {\em Snowbird lectures in algebraic geometry}, volume 388 of {\em
  Contemp. Math.}, pages 169--188. Amer. Math. Soc., Providence, RI, 2005.

\end{thebibliography}

\end{document}